\numberwithin{equation}{section}
\newcommand{\lr}[1]{\langle#1\rangle}
\def\lort{{\mathrel{\hbox{\hglue .2ex
  \vrule \@height .07ex \@width 0.64ex
  \vrule \@height 1.28ex \@width .07ex
  \hglue .2ex}}}}
\def\rort{{\mathrel{\hbox{\hglue .4ex
  \vrule \@height 1.28ex \@width .07ex
  \vrule \@height .07ex \@width 0.64ex
  \hglue .2ex
}}}}
\def\lrort{{\mathrel{\hbox{\hglue .2ex
  \vrule \@height .07ex \@width 0.64ex
  \vrule \@height 1.28ex \@width .07ex\vrule \@height .07ex \@width 0.64ex
  \hglue .2ex}}}}
\newtheorem{theorem}{Theorem}[section]
\newtheorem{proposition}[theorem]{Proposition}
\newtheorem{corollary}[theorem]{Corollary}
\newtheorem{lemma}[theorem]{Lemma}
\theoremstyle{definition}
\DeclareMathOperator{\Ext}{Ext}
\DeclareMathOperator{\Hom}{Hom}
\DeclareMathOperator{\Ob}{Ob}
\DeclareMathOperator{\Spec}{Spec}
\DeclareMathOperator{\Iso}{Iso}
\DeclareMathOperator{\Aut}{Aut}
\DeclareMathOperator{\Ind}{Ind}
\DeclareMathOperator{\GL}{GL}
\DeclareMathOperator{\add}{Add}
\newcommand{\gr}[1]{|#1|}
\newcommand{\cat}[1]{{\mathscr #1}}
\newtheorem{definition}[theorem]{Definition}
\DeclareMathOperator{\ad}{ad}
\newcommand{\tensor}{\otimes}
\def\FF{\mathbb{F}}
\def\ZZ{\mathbb{Z}}
\def\QQ{\mathbb{Q}}
\DeclareMathOperator{\Exp}{Exp}
\DeclareMathOperator{\End}{End}
\def\kk{\Bbbk}
\newcommand{\qbinom}[3][q]{\genfrac[]{0pt}0{#2}{#3}_{#1}}
\DeclareMathOperator{\Mod}{mod}
\begin{document}
\newgeometry{margin=2.5cm}
\author[A. Berenstein]{Arkady Berenstein}
\address{Department of Mathematics, University of Oregon,
Eugene, OR 97403, USA} \email{arkadiy@math.uoregon.edu}

\author[J. Greenstein]{Jacob Greenstein}
\address{Department of Mathematics, University of California, Riverside, CA 92521.} 
 \email{jacob.greenstein@ucr.edu}

\date{\today}

\thanks{The authors were supported in part
by NSF grants DMS-0800247, DMS-1101507 (A.B.) and DMS-0654421 (J.~G.)
}

\title{Quantum Chevalley groups}

\begin{abstract} 
The goal of this paper is to construct quantum analogues of Chevalley groups inside 
completions of quantum groups or, more precisely, inside completions of Hall algebras of finitary categories.
In particular, we obtain pentagonal and other identities in the quantum Chevalley groups which generalize their classical counterparts and 
explain Faddeev-Volkov quantum dilogarithmic identities and their recent generalizations due to Keller
\end{abstract}

\maketitle

\section{Introduction}
\label{sect:intro}

It is well-known that a quantum group is not a group, in particular, it has very few elements that can be regarded as ``group-like''. To construct group-like 
elements, it suffices to introduce a collection of ``exponentials'' in the completed quantum group which are analogues of formal exponentials $\exp(x)$, where $x$ runs over a 
Lie algebra $\mathfrak g$ and the exponential $\exp(x)$ is viewed as an element of the completed enveloping algebra $\widehat U(\mathfrak g)$. 
Since each quantum group can be realized as a (twisted) Hall-Ringel algebra of a (finitary) abelian category, this motivates the following definition. 
Consider the completion~$\widehat H_{\cat A}$ of the Hall algebra $H_{\cat A}$ of~$\cat A$ with respect to its standard grading 
by the Grothendieck group of~$\cat A$.

\begin{definition} Let ${\cat A}$ be a finitary abelian category. Define the {\it exponential}  $\Exp_{\cat A}$ of ${\cat A}$  in the completed Hall algebra $\widehat H_{\cat A}$ by:
$$\Exp_{\cat A}=\sum_{[A]\in\Iso\cat A}[A] $$ 
where $\Iso\cat A$ is the set of isomorphism classes $[A]$ of objects in $\cat A$.
\end{definition}

We can justify this definition, in particular, by the following simple observations.

\begin{lemma} For any finitary category $\cat A$ one has
$$\Delta(\Exp_{\cat A})=\Exp_{\cat A}\otimes \Exp_{\cat A},$$ 
where $\Delta:H_{\cat A}\to H_{\cat A}\widehat \otimes H_{\cat A}$ is a coproduct (though, not always co-associative) defined in Appendix~\ref{sect:Appendix}.
\end{lemma}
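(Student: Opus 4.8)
The plan is to use linearity together with the grading by the Grothendieck group $K_0(\cat A)$. Since $\Exp_{\cat A}=\sum_{[A]\in\Iso\cat A}[A]$ and each homogeneous component (objects of a fixed class in $K_0(\cat A)$) is a finite sum of basis elements because $\cat A$ is finitary, applying $\Delta$ to $\Exp_{\cat A}$ means applying it degree by degree, which is well defined. Writing the coproduct of a basis element as $\Delta([A])=\sum_{[B],[C]}c^A_{B,C}\,[B]\otimes[C]$, I would expand
\[
\Delta(\Exp_{\cat A})=\sum_{[A]}\sum_{[B],[C]}c^A_{B,C}\,[B]\otimes[C]
\]
and compare it, bidegree by bidegree, with $\Exp_{\cat A}\otimes\Exp_{\cat A}=\sum_{[B],[C]}[B]\otimes[C]$. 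A term $c^A_{B,C}$ can be nonzero only when $[A]=[B]+[C]$ in $K_0(\cat A)$, and there are finitely many isomorphism classes in each Grothendieck class, so in every fixed bidegree both sides are honest finite sums and the rearrangement of the double sum is legitimate. The whole lemma thus reduces to the scalar identity
\[
\sum_{[A]\in\Iso\cat A}c^A_{B,C}=1 \qquad\text{for every pair } [B],[C],
\]
where the sum is in fact finite.

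The substance is the evaluation of this inner sum using the explicit coproduct from Appendix~\ref{sect:Appendix}. I expect $c^A_{B,C}$ to be (a possibly $v$-twisted version of) the proportion of extensions of $B$ by $C$ whose middle term is isomorphic to $A$, assembled from the Hall number $G^A_{B,C}$ counting subobjects $U\subseteq A$ with $U\cong C$ and $A/U\cong B$, together with the orders $|\Aut(-)|$, $|\Hom(B,C)|$ and $|\Ext^1(B,C)|$. Invoking the Riedtmann--Peng identity $G^A_{B,C}\,|\Aut B|\,|\Aut C|/|\Aut A|=|\Ext^1(B,C)_A|/|\Hom(B,C)|$, where $\Ext^1(B,C)_A$ is the set of extension classes with middle term isomorphic to $A$, the coefficient should simplify to $c^A_{B,C}=|\Ext^1(B,C)_A|/|\Ext^1(B,C)|$. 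The collapse is then immediate: the extension group decomposes as the finite disjoint union $\Ext^1(B,C)=\bigsqcup_{[A]}\Ext^1(B,C)_A$ according to the isomorphism type of the middle term, so $\sum_{[A]}|\Ext^1(B,C)_A|=|\Ext^1(B,C)|$ and the inner sum is exactly $1$, which is what we wanted.

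The main obstacle is the bookkeeping of normalizations: I must confirm that the precise coproduct defined in the Appendix --- including any twist by the Euler form and the exact placement of the $|\Aut|$, $|\Hom|$ and $|\Ext^1|$ factors --- is genuinely the one for which the $B,C$-dependent denominators conspire to make $c^A_{B,C}$ a probability distribution in $[A]$. Once the coefficient is verified to equal $|\Ext^1(B,C)_A|/|\Ext^1(B,C)|$ (equivalently, once the twist factors are seen to cancel after summing over $[A]$), the identity follows at once. This same normalization is presumably what underlies the parenthetical remark that $\Delta$ need not be coassociative, since the global factor $1/|\Ext^1(B,C)|$ does not behave multiplicatively under iterated comultiplication.
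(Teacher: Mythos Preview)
Your proposal is correct and follows essentially the same route as the paper's proof: using Riedtmann's formula, the coefficient of $[B]\otimes[C]$ in $\Delta([A])$ simplifies exactly to $|\Ext^1_{\cat A}(B,C)_A|/|\Ext^1_{\cat A}(B,C)|$, and summing over $[A]$ gives $1$ because $\Ext^1_{\cat A}(B,C)$ is the disjoint union of the sets $\Ext^1_{\cat A}(B,C)_A$. Your hedging about the normalization is unnecessary --- the twist factor $|\Hom|/|\Ext^1|$ in the Appendix's definition of $\Delta$ is precisely what cancels against the $|\Hom|$ in Riedtmann's formula to leave the probability distribution you guessed.
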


\begin{lemma} If ${\cat A}$ is the category of finite dimensional vector spaces over a finite field $\kk$, then 
$$\Exp_{\cat A}=\exp_q(x)=\sum_{n\ge 0} \frac {x^n}{[n]_q!}$$
where $x$ is the isomorphism class of $\kk$ in ${\cat A}$, $q=|\kk|$, and $[n]_q!=[1]_q[2]_q\cdots [n]_q$, where $[k]_q=1+q+\cdots +q^{k-1}$ is the $q$-number.
\end{lemma}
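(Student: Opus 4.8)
The plan is to use the fact that $\cat A$ is a semisimple category whose objects are classified up to isomorphism by a single invariant, their dimension. Writing $V_n$ for an $n$-dimensional $\kk$-vector space, we have $\Iso\cat A=\{[V_n]:n\ge 0\}$, with $[V_0]=1$ the unit of $H_{\cat A}$ and $x=[V_1]$, so that $\Exp_{\cat A}=\sum_{n\ge 0}[V_n]$. Since $H_{\cat A}$ is graded by the Grothendieck group $K_0(\cat A)\cong\ZZ$ (by dimension), the homogeneous component of degree $n$ of $\Exp_{\cat A}$ is exactly $[V_n]$, while $x^n/[n]_q!$ is also homogeneous of degree $n$; thus the asserted equality in $\widehat H_{\cat A}$ is equivalent, degree by degree, to the single identity $[V_n]=x^n/[n]_q!$ in $H_{\cat A}$, and no convergence issue arises.

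First I would recall the Hall product $[M][N]=\sum_{[L]}g^L_{MN}[L]$, where $g^L_{MN}$ is the number of subobjects $W\subseteq L$ with $W\cong N$ and $L/W\cong M$. The key computation is then $x\cdot[V_{n-1}]$. Any $L$ fitting into a short exact sequence $0\to V_{n-1}\to L\to V_1\to 0$ has dimension $n$, so only $L=V_n$ contributes, and $g^{V_n}_{V_1,V_{n-1}}$ equals the number of $(n-1)$-dimensional subspaces of $\kk^n$. This count is the Gaussian binomial $\qbinom{n}{1}=[n]_q$, whence $x\cdot[V_{n-1}]=[n]_q\,[V_n]$.

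A straightforward induction on $n$ then gives $x^n=[n]_q!\,[V_n]$: the base case $n=0$ reads $1=[V_0]$, and the inductive step is $x^n=x\cdot x^{n-1}=[n-1]_q!\,(x\cdot[V_{n-1}])=[n-1]_q!\,[n]_q\,[V_n]=[n]_q!\,[V_n]$. Solving for $[V_n]$ and summing over $n$ yields
$$\Exp_{\cat A}=\sum_{n\ge 0}[V_n]=\sum_{n\ge 0}\frac{x^n}{[n]_q!}=\exp_q(x),$$
as claimed.

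The only real point requiring care is the evaluation of the structure constant $g^{V_n}_{V_1,V_{n-1}}$, namely the count of hyperplanes in $\kk^n$; everything else is bookkeeping. One should also confirm that the product in use is the untwisted Hall product of the appendix, so that no extra power of $q$ intervenes: since $\cat A$ is semisimple one has $\Ext^1=0$, and a multiplicative twist by $q^{\langle V_1,V_{n-1}\rangle}$ would rescale the coefficients away from the clean $q$-factorial. With the untwisted convention the factors $[n]_q$ appear exactly.
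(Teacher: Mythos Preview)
Your proof is correct and follows essentially the same route as the paper: establish the recursion $x\cdot[V_{n-1}]=[n]_q\,[V_n]$ and conclude by induction. The only difference is in how the structure constant is computed: you count hyperplanes in $\kk^n$ directly, while the paper (in Corollary~\ref{cor:deg-Rie}\eqref{cor:deg-Rie.b}, which treats the more general case of any exceptional brick~$E$) obtains the same factor via Riedtmann's formula, computing $|\Aut_{\cat A}(E^{\oplus(a+1)})|/(|\Aut_{\cat A}(E^{\oplus a})||\Aut_{\cat A}E||\Hom_{\cat A}(E^{\oplus a},E)|)=[a+1]_{q_E}$. Your direct count is more elementary here; the paper's version has the advantage of applying verbatim to any exceptional brick in any finitary category.
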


In general the exponentials behave nicely due to the following extension of Reineke's inversion formula \cite{reineke}*{Lemma 3.4} to more general abelian categories.
\begin{theorem} 
For any finitary abelian category with the finite length property
$$
{\Exp_{\cat A}}^{-1}=\sum_{[M]\,:\,\text{$M$ \em semisimple}} c_{[M]} \cdot [M],
$$
where
$$
c_{[M]}=\prod_{[S]\,:\,\text{$S$ \em simple}} (-1)^{[M:S]} |\End_{\cat A} S|^{\binom{[M:S]}{2}}
$$
and~$[M:S]$ denotes the Jordan-H\"older multiplicity of~$S$ in~$M$.
\end{theorem}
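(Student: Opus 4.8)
The plan is to exhibit the right-hand series as the inverse of $\Exp_{\cat A}$ by direct multiplication. Set $X=\sum_{[M]\,:\,M\text{ ss}}c_{[M]}[M]$. Since $\Exp_{\cat A}=[0]+(\text{terms of positive }K_0(\cat A)\text{-degree})$ and $[0]$ is the unit of $H_{\cat A}$, the element $\Exp_{\cat A}$ is a unit of the completion $\widehat H_{\cat A}$; hence it suffices to check that $X$ is a one-sided inverse, as then $X=\Exp_{\cat A}^{-1}(\Exp_{\cat A}X)=\Exp_{\cat A}^{-1}$. So I would compute the coefficient of an arbitrary $[N]$ in $\Exp_{\cat A}\cdot X$ and show it equals $\delta_{[N],[0]}$.

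Writing the Hall product as $[A]\cdot[M]=\sum_{[N]}g^N_{AM}[N]$, where $g^N_{AM}$ counts subobjects $U\subseteq N$ with $U\cong M$ and $N/U\cong A$, the coefficient of $[N]$ in $\Exp_{\cat A}\cdot X$ is $\sum_{[M]\,:\,M\text{ ss}}c_{[M]}\sum_{[A]}g^N_{AM}$. The inner sum ranges over all possible quotients $N/U$, so $\sum_{[A]}g^N_{AM}$ is just the number of subobjects of $N$ isomorphic to $M$; the whole coefficient therefore collapses to $\sum_{U\subseteq N,\;U\text{ ss}}c_{[U]}$. Thus the theorem reduces to the purely categorical identity $\sum_{U\subseteq N,\;U\text{ ss}}c_{[U]}=\delta_{[N],[0]}$.

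Next I would localize this sum to the socle: every semisimple subobject of $N$ is contained in $\operatorname{soc}(N)$, and every subobject of the semisimple object $\operatorname{soc}(N)$ is again semisimple, so one may replace $N$ by $S:=\operatorname{soc}(N)$, with $N=0\iff S=0$. Decomposing $S\cong\bigoplus_i S_i^{\oplus m_i}$ into isotypic blocks over the pairwise non-isomorphic simples $S_i$, the vanishing $\Hom_{\cat A}(S_i,S_j)=0$ for $i\ne j$ forces every subobject of $S$ to split as $\bigoplus_i U_i$ with $U_i\subseteq S_i^{\oplus m_i}$. By Schur's lemma $D_i:=\End_{\cat A}(S_i)$ is a division ring, finite since $\cat A$ is finitary, of cardinality $q_i=|\End_{\cat A}S_i|$; the subobjects of $S_i^{\oplus m_i}$ isomorphic to $S_i^{\oplus k}$ are exactly the $k$-dimensional $D_i$-subspaces of $D_i^{m_i}$, numbering $\qbinom[q_i]{m_i}{k}$. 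Because $c_{[U]}$ is multiplicative over the blocks, the sum factorizes as $\sum_{U\subseteq S}c_{[U]}=\prod_i\Bigl(\sum_{k=0}^{m_i}\qbinom[q_i]{m_i}{k}(-1)^k q_i^{\binom{k}{2}}\Bigr)$.

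Finally I would invoke the Gaussian binomial theorem $\sum_{k=0}^{m}\qbinom{m}{k}(-1)^k q^{\binom{k}{2}}=\prod_{j=0}^{m-1}(1-q^j)$, whose right side is the empty product $1$ when $m=0$ and vanishes (via the $j=0$ factor $1-q^0$) as soon as $m\ge1$. Hence each block factor is $\delta_{m_i,0}$, the product is $0$ precisely when some $m_i\ge1$, i.e. when $S\ne0$, i.e. when $N\ne0$, and is $1$ when $N=0$; this is exactly $\delta_{[N],[0]}$, completing the argument. I expect the only real obstacle to be the structural claim that subobjects of a semisimple object in an arbitrary finitary finite-length abelian category decompose across isotypic blocks and are counted by Gaussian binomials over the finite division rings $\End_{\cat A}S_i$; once this is in place, the remainder is the bookkeeping of Hall structure constants together with the classical $q$-identity.
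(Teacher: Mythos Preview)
Your argument is correct. The paper does not actually supply its own proof of this theorem; it is stated in the introduction as an extension of Reineke's inversion formula \cite{reineke}*{Lemma~3.4} and then used without further justification (e.g.\ in~\S\ref{sec:Jord}). Your direct verification that $\Exp_{\cat A}\cdot X=1$ by summing Hall numbers, reducing to the socle, splitting across isotypic components, and invoking the $q$-binomial identity (which is precisely the specialization $t=-1$ of~\eqref{eq:well-known}) is exactly the standard route and mirrors Reineke's original argument in the quiver setting. The structural step you flag as the main obstacle---that every subobject of a semisimple object decomposes across isotypic blocks and that subobjects of $S^{\oplus m}$ are counted by Gaussian binomials over the finite field $\End_{\cat A}S$---is indeed routine once one notes that $\Hom_{\cat A}(S_i,S_j)=0$ for $i\ne j$ forces any inclusion $\bigoplus_i S_i^{\oplus k_i}\hookrightarrow\bigoplus_i S_i^{\oplus m_i}$ to be block-diagonal, and that Wedderburn's little theorem makes each $\End_{\cat A}S_i$ a genuine field.
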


These observations suggest the following definition.
\begin{definition} Given a finitary abelian category ${\cat A}$, its  {\it quantum Chevalley group} $G_{\cat A}$ is the subgroup of $(\widehat H_{\cat A})^\times$ generated by all exponentials $\Exp_{\cat C}\in \widehat H_{\cat C}\subset \widehat H_{\cat A}$, 
where ${\cat C}$ runs over  all full subcategories of~$\cat A$ closed under extensions.
\end{definition}

Now we investigate the adjoint action of some elements of quantum Chevalley groups on their Hall algebras. 

For each object $E$ of ${\cat A}$ denote by $\add_{\cat A}(E)$ the minimal additive full subcategory of ${\cat A}$ containing all direct 
summands of~$E$. The following result  is obvious.
\begin{lemma} 
\label{le:brick}
If $E$ is an object of ${\cat A}$ without self-extensions such that $\End_{\cat A}E$ is a field, then $\add_{\cat A}(E)$ is semisimple and the element 
$$\Exp_{\add_{\cat A}(E)}=\exp_{q_E}([E])$$
of $\widehat H_{\cat A}$, where $q_E=|\End_{\cat A}E|$, belongs to the quantum Chevalley group. 

\end{lemma}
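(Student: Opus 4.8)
The plan is to reduce the statement to the elementary structure of the additive category generated by a single brick, and then to a direct computation in the Hall algebra $H_{\cat A}$. First I would note that since $\End_{\cat A}E=:\kk_E$ is a field it has no nontrivial idempotents, so $E$ is indecomposable; hence, up to isomorphism, the objects of $\add_{\cat A}(E)$ are exactly the finite powers $E^{\oplus n}$, $n\ge 0$. The hypothesis $\Ext^1_{\cat A}(E,E)=0$ gives $\Ext^1_{\cat A}(E^{\oplus m},E^{\oplus n})=0$, so every short exact sequence whose outer terms lie in $\add_{\cat A}(E)$ splits; in particular any extension of $E^{\oplus m}$ by $E^{\oplus n}$ is isomorphic to $E^{\oplus(m+n)}$, and $\add_{\cat A}(E)$ is a full subcategory closed under extensions. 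By the definition of $G_{\cat A}$ this already yields the last assertion, namely $\Exp_{\add_{\cat A}(E)}\in G_{\cat A}$.

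For the semisimplicity claim I would check that $\add_{\cat A}(E)$ is abelian with unique simple object $E$. A morphism $E^{\oplus m}\to E^{\oplus n}$ is an $n\times m$ matrix over the field $\kk_E$; composing with automorphisms of source and target (invertible matrices over $\kk_E$) brings it to a rank-$r$ normal form, whence its kernel and cokernel are isomorphic to $E^{\oplus(m-r)}$ and $E^{\oplus(n-r)}$ and again lie in $\add_{\cat A}(E)$. Thus $\add_{\cat A}(E)$ is an abelian subcategory equivalent to the category $\mathrm{vect}_{\kk_E}$ of finite-dimensional $\kk_E$-vector spaces, with $E$ corresponding to the one-dimensional space; this category is semisimple with the single simple object $E$.

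It remains to identify $\Exp_{\add_{\cat A}(E)}=\sum_{n\ge0}[E^{\oplus n}]$ with $\exp_{q_E}([E])$. The key is to compute $[E]^n$ in $H_{\cat A}$: by definition it equals $\sum_{[M]}F^M[M]$, where $F^M$ is the number of filtrations $0=M_0\subset\cdots\subset M_n=M$ with each $M_i/M_{i-1}\cong E$, and by the splitting above the only contributing $M$ is $E^{\oplus n}$. Because $\kk_E$ is a field, every nonzero morphism $E\to E^{\oplus n}$ is a split monomorphism, and two such have the same image iff they differ by an element of $\Aut_{\cat A}E=\kk_E^\times$; hence the subobjects of $E^{\oplus n}$ isomorphic to $E$ number $(q_E^n-1)/(q_E-1)=[n]_{q_E}$, and each has quotient $\cong E^{\oplus(n-1)}$. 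Counting flags inductively gives $F^{E^{\oplus n}}=[n]_{q_E}!$, so $[E]^n=[n]_{q_E}!\,[E^{\oplus n}]$ and therefore $[E^{\oplus n}]=[E]^n/[n]_{q_E}!$; summing over $n$ yields $\Exp_{\add_{\cat A}(E)}=\exp_{q_E}([E])$. Equivalently, once the equivalence $\add_{\cat A}(E)\simeq\mathrm{vect}_{\kk_E}$ is established, this is the earlier lemma on vector spaces transported along the equivalence.

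I expect the main obstacle to be the flag count $F^{E^{\oplus n}}=[n]_{q_E}!$. The delicate point is that the Hall product is computed with subobjects in the ambient category $\cat A$ rather than inside $\add_{\cat A}(E)$, so one must be sure that no unexpected subobjects of $E^{\oplus n}$ arise; what makes this tractable is precisely the brick condition, which forces every nonzero map out of $E$ to be a split monomorphism and thereby pins down both the subobjects isomorphic to $E$ and their quotients. One must also match the normalization of the $q$-factorial $[n]_{q_E}!$ with the chosen convention for Hall multiplication, so that the outcome agrees with the formula $\exp_q(x)=\sum_n x^n/[n]_q!$ of the earlier lemma.
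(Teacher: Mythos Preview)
Your proof is correct. The paper itself offers no proof for this lemma---it is declared ``obvious''---but the central computation $[E^{\oplus n}]=[E]^n/[n]_{q_E}!$ does appear later in the paper as Corollary~\ref{cor:deg-Rie}\eqref{cor:deg-Rie.b}, where it is derived from Riedtmann's formula (Proposition~\ref{prop:Rie}): one shows $[E^{\oplus a}][E]=[a+1]_{q_E}[E^{\oplus(a+1)}]$ by reading off the ratio $|\Aut_{\cat A}(E^{\oplus(a+1)})|/(|\Aut_{\cat A}(E^{\oplus a})||\Aut_{\cat A}E||\Hom_{\cat A}(E^{\oplus a},E)|)$ directly. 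Your approach instead counts complete flags in $E^{\oplus n}$ with successive quotients isomorphic to~$E$, which is a more hands-on route to the same identity and has the merit of being self-contained (it does not rely on the orbit-counting argument behind Riedtmann's formula). The point you flag as delicate---that subobjects of $E^{\oplus n}$ isomorphic to~$E$ must be counted in the ambient category~$\cat A$---is handled correctly: any such subobject is the image of a nonzero morphism $E\to E^{\oplus n}$, and the brick hypothesis forces every such morphism to be a split monomorphism, so nothing unexpected can occur.
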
 

Given a full additive subcategory $\cat B$ of ${\cat A}$ we denote by $\cat B^\rort$ (respectively $\cat B^\lort$) the largest full subcategory $\cat C$ of $\cat A$ 
whose objects~$C$ satisfy $\Ext^1_{\cat A}(B,C)=0$ (respectively $\Ext^1_{\cat A}(C,B)=0$) for all $B\in\Ob\cat B$.
Clearly, ${\cat C}\subset {\cat B}^\rort$ if
and only if ${\cat B}\subset {\cat C}^\lort$. The following Lemma is immediate.

\begin{lemma} For any full additive subcategory~$\cat B$ of~$\cat A$, $\cat B^\lort$ and $\cat B^\rort$ are closed under extensions.
\end{lemma}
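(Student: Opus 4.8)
The plan is to deduce both closure statements from the long exact sequence in $\Ext$ attached to a short exact sequence, treating $\cat B^\rort$ and $\cat B^\lort$ symmetrically. Fix a short exact sequence $0\to C'\to C\to C''\to 0$ in $\cat A$ with $C'$ and $C''$ in the subcategory under consideration; I would show that $C$ lies in the same subcategory by verifying the defining $\Ext^1$-vanishing condition one test object at a time.

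For $\cat B^\rort$, fix $B\in\Ob\cat B$ and apply the covariant functor $\Hom_{\cat A}(B,-)$. Since $\cat A$ is abelian, Yoneda $\Ext$ furnishes the long exact sequence
$$\cdots\to\Ext^1_{\cat A}(B,C')\to\Ext^1_{\cat A}(B,C)\to\Ext^1_{\cat A}(B,C'')\to\cdots.$$
By hypothesis $C',C''\in\cat B^\rort$, so the two outer terms vanish, and exactness forces $\Ext^1_{\cat A}(B,C)=0$. As $B\in\Ob\cat B$ was arbitrary, $C\in\cat B^\rort$.

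For $\cat B^\lort$ the argument is dual: fixing $B\in\Ob\cat B$ and applying the contravariant functor $\Hom_{\cat A}(-,B)$ produces
$$\cdots\to\Ext^1_{\cat A}(C'',B)\to\Ext^1_{\cat A}(C,B)\to\Ext^1_{\cat A}(C',B)\to\cdots,$$
and again the vanishing of the outer terms, now because $C',C''\in\cat B^\lort$, yields $\Ext^1_{\cat A}(C,B)=0$ for every $B$, that is, $C\in\cat B^\lort$.

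There is essentially no obstacle here beyond bookkeeping, which is why the lemma is labeled immediate; the only point requiring a moment of care is the existence of the $\Ext$ long exact sequence in a general abelian category, without assuming enough injectives or projectives. This is supplied by the theory of Yoneda extensions, in which $\Ext^1$ classifies extension classes and the connecting maps arise from pullback and pushout of such classes, so I would simply invoke the standard long exact sequence rather than reprove it.
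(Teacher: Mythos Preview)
Your proof is correct and is precisely the standard argument the paper has in mind; the paper itself gives no proof, simply declaring the lemma ``immediate.'' Your use of the long exact $\Ext$ sequence (via Yoneda extensions, so no injectives or projectives are needed) is the expected route, and your remark about this subtlety is apt.
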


We will show below that  the conjugation with exponentials $\Exp_{\add_{\cat A}(E)}=\exp_{q_E}([E])$ for each $E$ as in Lemma \ref{le:brick} almost preserves the entire Hall algebra  of the categories $\add_{\cat A}(E)^\rort$ and $\add_{\cat A}(E)^\lort$.

To make this statement
precise, we need more notation. Let~$R$ be a ring and let $S=\{a_\iota\}_{\iota\in I}$ be a set of pairwise commuting 
regular elements (i.e., not zero divisors) in $R$. The {\em localization} $R[S^{-1}]$ of~$R$ with respect to~$S$ is then the quotient of the free 
product $R*{\mathbb Z}[x_\iota:\iota\in I]$ by the ideal generated by $a_\iota*x_\iota-1$ and $x_\iota*a_\iota-1$, $\iota\in I$.

Clearly, for each $E$ as in Lemma \ref{le:brick} the subset $\mathcal S_E\subset H_{\add_{\cat A}(E)^\lort}
\cap H_{\add_{\cat A}(E)^\rort}$ defined by 
$$\mathcal S_E:=\{1+(q_E-1)q_E^j[E]\,|\,j\in\mathbb Z\}$$ 
consists of pairwise commuting regular elements, therefore the localizations $H_{\add_{\cat A}(E)^\lort}[\mathcal S_E^{-1}]$ 
and $H_{\add_{\cat A}(E)^\rort}[\mathcal S_E^{-1}]$ are well-defined.

The following results furthers the analogy between the classical Chevalley group viewed as automorphisms of a semisimple Lie algebra  over $\ZZ$ and its quantum counterpart whose subgroup  
acts by automorphisms of the (slightly extended) Hall subalgebra.

\begin{theorem} 
\label{th:local conjugation}
Let $E$ be as in Lemma \ref{le:brick}. Then the assignment
$$x\mapsto \exp_{q_E}([E]) \cdot x \cdot \exp_{q_E}([E])^{-1}$$  
defines an automorphism of the localized algebras $H_{\add_{\cat A}(E)^\lort}[\mathcal S_E^{-1}]$ and $H_{\add_{\cat A}(E)^\rort}[\mathcal S_E^{-1}]$.
\end{theorem}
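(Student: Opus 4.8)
The plan is to treat the conjugation as an automorphism of the whole completed Hall algebra $\widehat H_{\cat A}$ and then isolate precisely what must be checked for it to restrict to the localized subalgebra. Write $\cat C=\add_{\cat A}(E)^\rort$ and $\operatorname{Ad}(x)=\exp_{q_E}([E])\,x\,\exp_{q_E}([E])^{-1}$. Since $\Ext^1_{\cat A}(E,E)=0$ we have $E\in\cat C$, so $\exp_{q_E}([E])\in\widehat H_{\cat C}$ and $\operatorname{Ad}$ is an algebra automorphism of $\widehat H_{\cat C}$; moreover each element of $\mathcal S_E$ is invertible in $\widehat H_{\cat C}$ (its inverse is a geometric series, convergent because its terms lie in pairwise distinct degrees), so $H_{\cat C}[\mathcal S_E^{-1}]$ sits inside $\widehat H_{\cat C}$. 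Because $\exp_{q_E}([E])$ is a power series in $[E]$, it commutes with $[E]$; hence $\operatorname{Ad}$ fixes $[E]$ and therefore fixes every element of $\mathcal S_E$ together with its inverse. Consequently $\operatorname{Ad}$ preserves $H_{\cat C}[\mathcal S_E^{-1}]$ as soon as it maps each algebra generator $[C]$, $[C]\in\Iso\cat C$, into $H_{\cat C}[\mathcal S_E^{-1}]$; invertibility then follows by running the identical argument for conjugation by $\exp_{q_E}([E])^{-1}$, which has the same shape, so that the two conjugations are mutually inverse automorphisms of the localized algebra. Finally, the $\lort$ statement reduces to the $\rort$ statement applied to $\cat A^{\mathrm{op}}$, under which $\rort$ and $\lort$ are interchanged and $H_{\cat A^{\mathrm{op}}}=H_{\cat A}^{\mathrm{op}}$.

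Everything thus reduces to computing $\operatorname{Ad}([C])$. By Lemma~\ref{le:brick} and the definition of $\Exp$ one has $\exp_{q_E}([E])=\sum_{n\ge0}[E^{\oplus n}]$, and by the Reineke inversion formula (the Theorem above, applied to the semisimple category $\add_{\cat A}(E)$) one has $\exp_{q_E}([E])^{-1}=\sum_{m\ge0}(-1)^m q_E^{\binom m2}[E^{\oplus m}]$. The left multiplication is the easy half: the defining property $\Ext^1_{\cat A}(E,C)=0$ of $\cat C=\add_{\cat A}(E)^\rort$ forces every extension of $E^{\oplus n}$ by $C$ to split, so each product $[E^{\oplus n}]\cdot[C]$ is a single basis element supported on $E^{\oplus n}\oplus C\in\cat C$. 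The real work is the right multiplication by $\exp_{q_E}([E])^{-1}$: the products $[E^{\oplus n}\oplus C]\cdot[E^{\oplus m}]$ run over genuinely non-split middle terms of $0\to E^{\oplus m}\to X\to E^{\oplus n}\oplus C\to 0$, and these are governed by the other extension group $\Ext^1_{\cat A}(C,E)$, which is allowed to be nonzero.

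The heart of the argument is the resummation of $\operatorname{Ad}([C])=\sum_{n,m}(-1)^m q_E^{\binom m2}[E^{\oplus n}\oplus C]\cdot[E^{\oplus m}]$, which a priori is only an element of $\widehat H_{\cat C}$. Collecting the coefficient of each isomorphism class produces alternating sums of the form $\sum_m(-1)^m q_E^{\binom m2}\qbinom{N}{m}x^m$, i.e.\ instances of the $q$-binomial theorem $\sum_m(-1)^m q^{\binom m2}\qbinom{N}{m}x^m=\prod_{i=0}^{N-1}(1-q^i x)$. In the base case where $C$ is simple one checks directly that the ``diagonal'' contributions collapse (by this identity at $x=1$) to $[C]$ itself, while the off-diagonal contributions assemble into a single geometric series $\sum_{K\ge0}\bigl((1-q_E)q_E^{-1}[E]\bigr)^K=\bigl(1+(q_E-1)q_E^{-1}[E]\bigr)^{-1}$, so that $\operatorname{Ad}([C])$ is an $H_{\cat C}$-combination of the inverse of $1+(q_E-1)q_E^{-1}[E]\in\mathcal S_E$. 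I would then induct on the Jordan--H\"older length of $C$, using a filtration $0\to C'\to C\to S\to 0$ to reduce a general $C$ to shorter objects and simples, extracting the relevant $\mathcal S_E$-denominators at each stage from the same $q$-binomial collapse.

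The main obstacle is exactly this resummation for general $C$. When $\dim_{\End_{\cat A}E}\Ext^1_{\cat A}(C,E)$ is large the middle terms $X$ form a complicated family, and one must compute the corresponding Hall numbers and show that the alternating $q_E$-weighted sums factor, via the $q$-binomial theorem, into a finite product of geometric series whose ratios are precisely of the form $(q_E-1)q_E^{j}$. Pinning down which exponents $j$ occur---so that the denominators genuinely lie in $\mathcal S_E$ and only finitely many of them appear---is the crux, and it is here that the hypotheses on $E$ are essential: $\add_{\cat A}(E)$ being semisimple with $\End_{\cat A}E$ a field of order $q_E$ is what guarantees that all the $q$-powers produced by the Hall numbers are honest powers of $q_E$ and that the telescoping identities are exact.
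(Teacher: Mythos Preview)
Your outline has a genuine gap: the ``resummation for general $C$'' that you yourself flag as the main obstacle is never carried out, and the proposed induction on Jordan--H\"older length does not have a clear mechanism, since $[C]$ is not in any useful sense a product of shorter classes in $H_{\cat C}$, and a filtration $0\to C'\to C\to S\to 0$ does not let you express $\operatorname{Ad}([C])$ in terms of $\operatorname{Ad}([C'])$ and $\operatorname{Ad}([S])$. Even your base case is imprecise: for a simple $C\not\cong E$ with $\Ext^1_{\cat A}(E,C)=0$ one has $\nu_E(C)=0$, but $r=\dim_{\End_{\cat A}E}\Ext^1_{\cat A}(C,E)$ can be arbitrary, and the conjugate $\operatorname{Ad}([C])$ then involves $r$ distinct denominators $1+(q_E-1)q_E^{j}[E]$, not a single one. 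What is missing is a filtration that forces the computation to terminate and tells you exactly which elements of $\mathcal S_E$ occur.

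The paper supplies exactly this missing organizing principle, and it is not length but $\dim_{\End_{\cat A}E}\Ext^1_{\cat A}(C,E)$. The key step (Lemma~\ref{lem:brick1}) is that for a non-split extension $0\to E\to U\to C\to 0$ one has $\dim\Ext^1_{\cat A}(U,E)=\dim\Ext^1_{\cat A}(C,E)-1$ and $\nu_E(U)=\nu_E(C)+1$. Feeding this into the Hall product shows that the $q$-commutator $(\ad^*_{q_E^{\nu_E(C)}}[E])([C])=[C][E]-q_E^{\nu_E(C)}[E][C]$ is a combination of classes $[U]$ with strictly smaller $\Ext^1(-,E)$; iterating, one gets the finite nilpotency $(\ad^*_{(q_E^{\nu},\ldots,q_E^{\nu+r})}[E])([C])=0$ with $\nu=\nu_E(C)$ and $r=\dim\Ext^1_{\cat A}(C,E)$ (Proposition~\ref{prop:ore-set}). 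An elementary identity (Corollary~\ref{cor:seq}) then rewrites $\exp_{q_E}([E])\,[C]\,\exp_{q_E}([E])^{-1}$ as a \emph{finite} sum of iterated $q$-commutators multiplied by explicit products $\Phi_\nu([E],q_E)$ of factors $1+(q_E-1)q_E^{j}[E]$ or their inverses (Corollary~\ref{cor:act-conj}). This simultaneously shows that $\mathcal S_E$ is an Ore set and that conjugation preserves the localization (Proposition~\ref{prop:abs-Ore}). In short, replace the attempted direct resummation of Hall numbers by the $q$-commutator calculus; the induction you want is on $\dim\Ext^1_{\cat A}(C,E)$, not on length.
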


We refine and prove the theorem in Section~\ref{sec:comm} (Theorem~\ref{prop:auto}). 
In fact, we show that $\mathcal S_E$ is an Ore set for both $H_{\add_{\cat A}(E)^\lort}$ and $H_{\add_{\cat A}(E)^\rort}$. In a special case
when $E$ is a projective or an injective indecomposable, or, more generally, a preprojective or preinjective indecomposable 
in a hereditary category, 
it turns out that we obtain an automorphism of the localized Hall algebra
$H_{\cat A}[\mathcal S_E^{-1}]$.

Next we discuss factorization of exponentials.
\begin{definition}
We say that an ordered pair of full subcategories $(\cat A',\cat A'')$ of~$\cat A$
is a {\em factorizing pair} if $\cat A',\cat A''$ are closed under extensions and every object~$M$ in~$\cat A$ has a unique subobject~$M''\in\Ob\cat A''$
such that $M/M''$ is an object in~$\cat A'$.

More generally, we define a factorizing sequence $({\cat A}_1,\ldots,{\cat A}_m)$, $m\ge 2$ for ${\cat A}$ recursively by:

$\bullet$ If $m\ge 3$, then $({\cat A}_1,\ldots,{\cat A}_{m-1})$ is a  factorizing sequence 
for the category ${\cat A}_{\le m-1}$, which is the minimal additive subcategory of  
${\cat A}$ containing ${\cat A}_1,\ldots,{\cat A}_{m-1}$ and closed under extensions;

$\bullet$ $({\cat A}_{\le m-1},{\cat A}_m)$ is a factorizing pair for ${\cat A}$.

\end{definition}
The following result is immediate.

\begin{proposition}
\label{pr:factorizable sequence}
If~$({\cat A}_1,\ldots,{\cat A}_m)$ is a factorizing sequence for a finitary abelian category~$\cat A$
then 
\begin{enumerate}[{\rm(a)}]
\item\label{pr:fs.a} $\Exp_{\cat A}=\Exp_{{\cat A}_1}\cdots \Exp_{{\cat A}_m}$.
 
\item\label{pr:fs.b} If, additionally, ${\cat A}_i\subset {\cat A}_j^\lort$ for all $i<j$, then 
$$H_{\cat A}\cong H_{{\cat A}_1}\tensor \cdots \tensor H_{{\cat A}_m}$$
as a vector space.
\end{enumerate}
\end{proposition}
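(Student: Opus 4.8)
The plan is to induct on the length~$m$ of the factorizing sequence, reducing everything to the case $m=2$ of a single factorizing pair and then peeling off the last factor via the recursive definition together with the associativity of the Hall product. Throughout I would use that the Hall product is given by $[A]\cdot[B]=\sum_{[M]}g^M_{A,B}[M]$, where $g^M_{A,B}$ is the number of subobjects $N\subseteq M$ with $N\cong B$ and $M/N\cong A$; thus in a product the right factor plays the role of the subobject and the left factor that of the quotient, matching the roles of $\cat A''$ and $\cat A'$ in a factorizing pair. Part~(a) will use only the factorizing property, while part~(b) will additionally use the $\Ext^1$-vanishing.

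For part~(a) with $m=2$ and a factorizing pair $(\cat A_1,\cat A_2)$, I would expand
$$\Exp_{\cat A_1}\cdot\Exp_{\cat A_2}=\sum_{[A]\in\Iso\cat A_1}\ \sum_{[B]\in\Iso\cat A_2}[A]\cdot[B]=\sum_{[M]}\Big(\sum_{[A],[B]}g^M_{A,B}\Big)[M],$$
and observe that the inner sum counts precisely the subobjects $N\subseteq M$ with $N\in\Ob\cat A_2$ and $M/N\in\Ob\cat A_1$. By the defining uniqueness property of a factorizing pair this number equals $1$ for every $M\in\Ob\cat A$, so the right-hand side is $\sum_{[M]\in\Iso\cat A}[M]=\Exp_{\cat A}$. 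The general case then follows by induction: applying this identity to the factorizing pair $(\cat A_{\le m-1},\cat A_m)$ gives $\Exp_{\cat A}=\Exp_{\cat A_{\le m-1}}\cdot\Exp_{\cat A_m}$, and the induction hypothesis applied to the factorizing sequence $(\cat A_1,\dots,\cat A_{m-1})$ of $\cat A_{\le m-1}$ rewrites $\Exp_{\cat A_{\le m-1}}$ as $\Exp_{\cat A_1}\cdots\Exp_{\cat A_{m-1}}$.

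For part~(b) with $m=2$, the extra hypothesis $\cat A_1\subset\cat A_2^\lort$ means $\Ext^1_{\cat A}(A,B)=0$ for all $A\in\Ob\cat A_1$ and $B\in\Ob\cat A_2$. Since $g^M_{A,B}\neq0$ forces $M$ to be an extension of $A$ by $B$, every such extension splits, and hence $[A]\cdot[B]=g^{A\oplus B}_{A,B}\,[A\oplus B]$ with $g^{A\oplus B}_{A,B}\ge1$. On the other hand, the factorizing property gives, for each $M$, a short exact sequence $0\to M''\to M\to M/M''\to0$ with $M''\in\Ob\cat A_2$ and $M/M''\in\Ob\cat A_1$, which splits by the same $\Ext^1$-vanishing; together with the uniqueness of $M''$ this shows that $([A],[B])\mapsto[A\oplus B]$ is a bijection $\Iso\cat A_1\times\Iso\cat A_2\xrightarrow{\sim}\Iso\cat A$. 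Therefore the multiplication map $H_{\cat A_1}\tensor H_{\cat A_2}\to H_{\cat A}$ sends the basis vector $[A]\tensor[B]$ to a nonzero scalar multiple of the distinct basis vector $[A\oplus B]$, and so is a linear isomorphism over the ground field.

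The inductive step for part~(b) again peels off $\cat A_m$, but first I must check that the hypotheses are inherited by the subsequence. The key point, which I expect to be the only nonformal step, is to verify $\cat A_{\le m-1}\subset\cat A_m^\lort$, i.e.\ that $\Ext^1(C,B)=0$ for all $C\in\Ob\cat A_{\le m-1}$ and $B\in\Ob\cat A_m$: since $\cat A_{\le m-1}$ is generated under extensions by $\cat A_1,\dots,\cat A_{m-1}$, each of which lies in $\cat A_m^\lort$ by hypothesis, this follows by induction on the extension-building using the exact sequence $\Ext^1(C'',B)\to\Ext^1(C,B)\to\Ext^1(C',B)$ attached to $0\to C'\to C\to C''\to0$. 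One also notes that, because an extension-closed full subcategory has the same $\Ext^1$ as the ambient category, the conditions $\cat A_i\subset\cat A_j^\lort$ for $i<j\le m-1$ persist inside $\cat A_{\le m-1}$, so the induction hypothesis applies there. Combining the $m=2$ isomorphism $H_{\cat A}\cong H_{\cat A_{\le m-1}}\tensor H_{\cat A_m}$ with the induction hypothesis $H_{\cat A_{\le m-1}}\cong H_{\cat A_1}\tensor\cdots\tensor H_{\cat A_{m-1}}$, and using associativity of the Hall product to identify the composite with the iterated multiplication map, yields the claim.
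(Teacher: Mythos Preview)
Your proof is correct and is exactly the unwinding that the paper has in mind: the paper gives no argument at all, declaring the proposition ``immediate,'' and your write-up supplies precisely the induction on~$m$ and the direct count (for part~(a)) and basis bijection (for part~(b)) that make it so. The one-line observation in part~(a) that $\sum_{[A],[B]}g^M_{A,B}$ counts subobjects $N\subseteq M$ with $N\in\cat A_2$ and $M/N\in\cat A_1$, hence equals~$1$ by the definition of a factorizing pair, is the whole content, and your inductive peeling via $(\cat A_{\le m-1},\cat A_m)$ matches the recursive definition verbatim.

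One small remark: in the inductive step of part~(b) you correctly verify $\cat A_{\le m-1}\subset\cat A_m^\lort$ by a long-exact-sequence argument; the paper has already recorded this as a lemma (that $\cat B^\lort$ is closed under extensions for any~$\cat B$), so you could simply cite that. Otherwise there is nothing to add.
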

The simplest example of a factorizing pair is provided by the following.
\begin{proposition}
Let $S$ be a simple  object in a finitary abelian category~$\cat A$ with the finite length property. Let~$\cat A'=\add_{\cat A}S$ and let $\cat A''$ be the 
maximal full subcategory of~$\cat A$ whose objects~$M$ satisfy
$[M:S]=0$. 

\begin{enumerate}[{\rm(a)}]
 \item 
 If $S$ is injective, then  $(\cat A',\cat A'')$ is a factorizing pair for~$\cat A$.

\item If $S$ is projective, then  $(\cat A'',\cat A')$  is a factorizing pair for~$\cat A$.
\end{enumerate}
\end{proposition}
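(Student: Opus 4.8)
The plan is to prove~(a) in detail and deduce~(b) by the dual argument, interchanging $\Hom_{\cat A}(-,S)$ with $\Hom_{\cat A}(S,-)$, the reject with the trace, and the injectivity of~$S$ with its projectivity. Write $D=\End_{\cat A}S$, a division ring since $S$ is simple. Two of the three conditions are quick to dispatch. First, $\cat A''$ is closed under extensions in any finitary abelian category, because multiplicities are additive: $[M:S]=[M':S]+[M'':S]$ along a short exact sequence $0\to M'\to M\to M''\to 0$. Second, since $S$ is simple the objects of $\cat A'=\add_{\cat A}S$ are exactly the finite direct sums $S^n$, and any extension $0\to S^a\to X\to S^b\to 0$ splits because the finite direct sum $S^a$ is injective, forcing $X\cong S^{a+b}\in\cat A'$; this is the only place the hypothesis on~$S$ enters the closure statement.

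The crux is the existence and uniqueness of a subobject $M''\in\cat A''$ with $M/M''\in\cat A'$, and the tool I would use is the $S$-reject $N:=\bigcap_{f\in\Hom_{\cat A}(M,S)}\ker f$. First I would record that, $S$ being injective, $\Ext^1_{\cat A}(-,S)=0$, so $\Hom_{\cat A}(-,S)$ is exact; inducting on composition length with $\Hom_{\cat A}(T,S)=0$ for every simple $T\not\cong S$ and $\Hom_{\cat A}(S,S)=D$ then yields the dimension formula $\dim_D\Hom_{\cat A}(M,S)=[M:S]$. In particular $\cat A''=\{M:\Hom_{\cat A}(M,S)=0\}$. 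For existence, choosing a $D$-basis $f_1,\dots,f_d$ of $\Hom_{\cat A}(M,S)$ ($d=[M:S]$) gives an embedding $M/N\hookrightarrow S^{d}$, and simplicity of~$S$ forces $M/N\cong S^r\in\cat A'$; applying the exact functor $\Hom_{\cat A}(-,S)$ to $0\to N\to M\to M/N\to 0$ and noting that every $f\colon M\to S$ vanishes on~$N$ and hence factors through $M/N$, one finds that $\Hom_{\cat A}(M/N,S)\to\Hom_{\cat A}(M,S)$ is an isomorphism, so $\Hom_{\cat A}(N,S)=0$, i.e. $N\in\cat A''$.

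For uniqueness, suppose $M_0\subseteq M$ with $M_0\in\cat A''$ and $M/M_0\cong S^r\in\cat A'$. Since $\Hom_{\cat A}(M_0,S)=0$, every $f\colon M\to S$ vanishes on $M_0$, so $M_0\subseteq N$; conversely the composites $M\twoheadrightarrow M/M_0\cong S^r\to S$ given by the coordinate projections have common kernel $M_0$, whence $N\subseteq M_0$, so $M_0=N$. The whole argument is formal once the dimension formula is in hand, and the one genuine input is the exactness of $\Hom_{\cat A}(-,S)$ coming from injectivity: without it the reject $N$ could retain composition factors isomorphic to~$S$ invisible to maps $M\to S$, and the identity $\cat A''=\{M:\Hom_{\cat A}(M,S)=0\}$ would break down. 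Part~(b) is the mirror image: replace the reject by the $S$-trace $T:=\sum_{g\in\Hom_{\cat A}(S,M)}\Im g\cong S^r$, which lies in $\cat A'$, use the exactness of $\Hom_{\cat A}(S,-)$ (from $\Ext^1_{\cat A}(S,-)=0$) to obtain $\dim_D\Hom_{\cat A}(S,M)=[M:S]$ and hence $M/T\in\cat A''$, and run the same two-inclusion uniqueness argument.
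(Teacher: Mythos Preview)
Your proof is correct, and it takes a genuinely different route from the paper's. The paper defines $M''$ as the unique \emph{maximal} subobject of $M$ lying in $\cat A''$ (using that sums of $\cat A''$-subobjects are again in $\cat A''$), and then argues that $M/M''\in\cat A'$ by a splitting argument: choose $a$ maximal with $S^{\oplus a}\hookrightarrow M/M''$, use injectivity of $S$ to split $M/M''\cong R\oplus S^{\oplus a}$, and derive a contradiction from $R\ne 0$ by pulling a simple subobject of $R$ back to enlarge $M''$.

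Your approach instead characterizes the same subobject as the $S$-reject $N=\bigcap_f\ker f$ and leans on the dimension formula $\dim_D\Hom_{\cat A}(M,S)=[M:S]$, which follows from the exactness of $\Hom_{\cat A}(-,S)$. This has the virtue of giving an intrinsic description $\cat A''=\{M:\Hom_{\cat A}(M,S)=0\}$ and makes the uniqueness argument completely transparent via the two inclusions $M_0\subseteq N$ and $N\subseteq M_0$. The paper's argument, by contrast, avoids introducing the division ring $D$ and the dimension formula, at the cost of a slightly more hands-on contradiction; its uniqueness step is implicit (any $M_0$ with the stated property is contained in the maximal $M''$ and has the same length, hence coincides with it). Both arguments are short and natural; yours is a bit more functorial, the paper's a bit more elementary.
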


Let~$(\cat A',\cat A'')$ be a factorizing pair for~$\cat A$. We say that $(\cat A',\cat A'')$ is a pentagonal pair
if there exists a full additive subcategory~$\cat A_0$ closed under extensions such that 
$(\cat A'',\cat A_0,\cat A')$ is a factorizing triple. If $(\cat A',\cat A'')$ is a pentagonal pair then, clearly
$$
\Exp_{\cat A'}\Exp_{\cat A''}=\Exp_{\cat A}=\Exp_{\cat A''}\Exp_{\cat A_0}\Exp_{\cat A'},
$$
which is the generalized pentagonal relation in the quantum Chevalley group.
\begin{proposition}\label{prop:pent.pair}
Let~$\cat A$ be a finitary abelian category with the finite length property. Let~$\mathcal S\subset \Iso\cat A$
be the set of isomorphism classes of simples in~$\cat A$ and suppose that $\mathcal S=\mathcal S_-\bigsqcup
\mathcal S_+$ where $\mathcal S_-$ (respectively $\mathcal S_+$) is the set of isomorphism classes of projective
(respectively, injective) simple objects. Then $(\cat A_+,\cat A_-)$, where $\cat A_\pm=\add_{\cat A}\mathcal S_\pm$,
is a pentagonal pair with $\cat A_0$ being the full subcategory of~$\cat A$ whose objects have no simple direct summands.
\end{proposition}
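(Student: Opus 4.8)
The plan is to unwind the definitions: proving that $(\cat A_+,\cat A_-)$ is a pentagonal pair amounts to four verifications --- that $(\cat A_+,\cat A_-)$ is a factorizing pair for $\cat A$, that $\cat A_0$ is closed under extensions, and that $(\cat A_-,\cat A_0)$ and $(\cat A_{\le 2},\cat A_+)$ are factorizing pairs for $\cat A_{\le 2}$ and for $\cat A$ respectively, where $\cat A_{\le 2}$ is the minimal extension-closed additive subcategory containing $\cat A_-$ and $\cat A_0$. In each factorizing pair the required subobject will be produced canonically from the radical/socle filtration, so the first task is to understand these filtrations under the standing hypothesis that every simple is projective or injective (and not both, since $\mathcal S_-\cap\mathcal S_+=\emptyset$). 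Two elementary remarks drive everything: a projective simple $S\in\mathcal S_-$ is a direct summand of $M$ iff it occurs in $\operatorname{top}M=M/\operatorname{rad}M$ (a projective simple quotient splits off), and dually $S\in\mathcal S_+$ is a summand iff it occurs in $\operatorname{soc}M$ (an injective simple subobject splits off). Hence $\cat A_0=\{M:\operatorname{top}M\in\cat A_+\text{ and }\operatorname{soc}M\in\cat A_-\}$; closure under extensions then follows from a short chase on $0\to A\to B\to C\to0$: a would-be injective simple in $\operatorname{soc}B$ maps either trivially into $A$ or injectively into $C$, both excluded, and dually for $\operatorname{top}B$. Note also that $\cat A_\pm$ are extension-closed, since a projective (resp. injective) simple quotient (resp. subobject) splits.

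The crux is the lemma that $\operatorname{rad}M\in\cat A_-$ for every $M$, i.e. $M$ has Loewy length at most two with semisimple projective radical. I would first show $\operatorname{top}(\operatorname{rad}M)$ has no injective simple constituent: if $S\in\mathcal S_+$ were a quotient of $\operatorname{rad}M$, then injectivity of $S$ would split it off $\operatorname{rad}M$ and embed it in $M$, so injectivity would furnish a retraction $r\colon M\to S$ with $r(\operatorname{rad}M)\ne0$ --- impossible, as every map to a simple kills the radical. Thus $\operatorname{top}(\operatorname{rad}M)\in\cat A_-$ is projective, the sequence $0\to\operatorname{rad}^2M\to\operatorname{rad}M\to\operatorname{top}(\operatorname{rad}M)\to0$ splits, and comparing radicals forces $\operatorname{rad}^2M$ to equal its own radical, whence $\operatorname{rad}^2M=0$. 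The only further inputs will be the isotypic splitting $\operatorname{top}M=\operatorname{top}_+M\oplus\operatorname{top}_-M$ into injective and projective parts, the dual splitting of $\operatorname{soc}M$, and the vanishing $\Hom(\mathcal S_-,\mathcal S_+)=0$.

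Now I would assemble the factorizing pairs by exhibiting explicit subobjects. For $(\cat A_+,\cat A_-)$ take $M''=\ker(M\twoheadrightarrow\operatorname{top}_+M)$: then $M/M''=\operatorname{top}_+M\in\cat A_+$, while $M''$ is an extension of the projective $\operatorname{top}_-M$ by $\operatorname{rad}M\in\cat A_-$, which splits, so $M''\in\cat A_-$; uniqueness holds because any admissible $N$ must contain $\operatorname{rad}M$ and, modulo it, must equal $\operatorname{top}_-M$ inside the semisimple $\operatorname{top}M$ (here $\Hom(\mathcal S_-,\mathcal S_+)=0$ pins down the projective part). A Krull--Schmidt argument then identifies $\cat A_{\le2}=\add_{\cat A}(\cat A_-\cup\cat A_0)=\{M:\operatorname{soc}M\in\cat A_-\}$, since every indecomposable with no injective simple in its socle is either a projective simple or has no simple summand at all. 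For $(\cat A_{\le2},\cat A_+)$ take $M''=\operatorname{soc}_+M$: it splits off by injectivity, its complement has socle in $\cat A_-$ hence lies in $\cat A_{\le2}$, and uniqueness is again forced by the socle. For $(\cat A_-,\cat A_0)$, working inside $\cat A_{\le2}$, take $M''=\ker(M\twoheadrightarrow\operatorname{top}_-M)$: as $\operatorname{top}_-M$ is projective this splits $M\cong M''\oplus\operatorname{top}_-M$, so $\operatorname{top}M''=\operatorname{top}_+M\in\cat A_+$ and $\operatorname{soc}M''\subseteq\operatorname{soc}M\in\cat A_-$, giving $M''\in\cat A_0$ and $M/M''=\operatorname{top}_-M\in\cat A_-$; any other admissible subobject would, by modularity, differ by a nonzero semisimple projective summand, contributing a simple summand to an object of $\cat A_0$ --- a contradiction.

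The main obstacle is the radical lemma: everything downstream is bookkeeping with Krull--Schmidt and the splitting of projective and injective simples, but the fact that the hypothesis forces Loewy length two with semisimple projective radical is precisely what makes the canonical two-step filtrations exist. The second delicate point is uniqueness in each factorizing pair: existence only produces a subobject, and one must exclude all competitors, which is exactly where $\Hom(\mathcal S_-,\mathcal S_+)=0$ together with the ``no simple summand'' property of $\cat A_0$ does the work.
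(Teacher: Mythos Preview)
Your argument is essentially correct, but the route is quite different from the paper's. One sentence in the radical lemma is garbled: ``injectivity of~$S$ would split it off $\operatorname{rad}M$ and embed it in~$M$'' is not what happens --- injectivity does not split a quotient map. The valid argument is the one you presumably intend: with $N=\ker(\operatorname{rad}M\twoheadrightarrow S)$ one has $S\hookrightarrow M/N$, injectivity splits \emph{this} inclusion, and the composite $M\to M/N\to S$ is nonzero on $\operatorname{rad}M$, contradicting that maps to a simple kill the radical. With that fix your Loewy-length-two lemma stands, and the rest of your bookkeeping with $\operatorname{top}_\pm$ and $\operatorname{soc}_\pm$ goes through.

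The paper takes a much shorter path that avoids the radical lemma entirely. It observes that $(\cat A_+,\cat A_-)$ is a factorizing pair by (the obvious multi-simple version of) Proposition~\ref{prop:simpl}, and then gets the factorizing triple $(\cat A_-,\cat A_0,\cat A_+)$ in one stroke from Proposition~\ref{prop:decomp}\eqref{prop:decomp.b}: the decomposition $\cat A=\cat A_-\vee\cat A_0\vee\cat A_+$ is Krull--Schmidt, the $\Ext^1$-vanishings are immediate because objects of $\cat A_-$ are projective and objects of $\cat A_+$ are injective, and the $\Hom$-vanishings $\Hom_{\cat A}(\cat A_0,\cat A_-)=\Hom_{\cat A}(\cat A_+,\cat A_0)=0$ follow since a nonzero such map would force a simple direct summand in an object of~$\cat A_0$. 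Your approach trades this machinery for a concrete structural picture (every object has Loewy length~$\le 2$ with semisimple projective radical), which is illuminating and makes the canonical subobjects explicit, but it is considerably longer and re-proves by hand what Propositions~\ref{prop:simpl} and~\ref{prop:decomp} already package.
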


In particular, if $\cat A$ has only two isomorphism classes of simple objects $[S_0],[S_1]$ such that 
$\Ext^1_{\cat A}(S_0,S_1)=0$, we obtain
$$
[\exp_{q_0}([S_0]),\exp_{q_1}([S_1])]=\Exp_{\cat A_0}
$$
where $[x,y]=x^{-1}y xy^{-1}$ and~$q_i=|\End_{\cat A}S_i|$. Let~$a_i=\dim_{\End_{\cat A}S_i}\Ext^1_{\cat A}(S_1,S_0)$.
If~$a_0a_1<4$,
$\cat A$ has finitely many isomorphism classes of indecomposables and we obtain
$$
[\exp_{q_0}(X_{\alpha_0}),\exp_{q_1}(X_{\alpha_1})]=\begin{cases}\exp_{q_1}(X_{\alpha_{10}}),&a_0a_1=1,\\
\exp_{q_1}(X_{\alpha_{10}})\exp_{q_0}(X_{\alpha_{01}}),
&a_0a_1=2,\\
\exp_{q_1}(X_{\alpha_{10}})
\exp_{q_0}(X_{\alpha_{01}+\alpha_0})\exp_{q_1}(X_{\alpha_1+\alpha_{10}})\exp_{q_0}(X_{\alpha_{01}}),& a_0a_1=3,
\end{cases}
$$
where~$\alpha_{ij}=\alpha_i+a_j\alpha_j$
(the notation~$X_\alpha$ is explained in Theorem~\ref{thm:descr-prepr-Hall} below).
Other examples of pentagonal pairs are provided in Section~\ref{sec:examples}.

Given $[M]\in\Iso\cat A$, denote by $\gr{M}$ its image in the Grothendieck group~$K_0(\cat A)$ of~$\cat A$. If~$\cat A$ is hereditary, we can consider
the twisted group algebra $\mathcal P_{\cat A}$ of~$K_0(\cat A)$ with the multiplication defined by
$$
t^\alpha t^\beta=\lr{\alpha,\beta}^{-1} t^{\alpha+\beta}
$$
where $\lr{\gr{M},\gr{N}}=|\Hom_{\cat A}(M,N)|/|\Ext^1_{\cat A}(M,N)|$ is the Ringel form. Then we have 
an algebra homomorphism (\cites{Keller,Fei,reineke})
$$
\int:H_{\cat A}\to \mathcal P_{\cat A},\qquad  [M]\mapsto \frac{t^{\gr{M}}}{|\Aut_{\cat A}(M)|}. 
$$
Applying it to pentagonal identities yields the so-called quantum dilogarithm identities (cf.~\cite{Keller}). 
In this case, $\mathcal P_{\cat A}$ is generated by $x_1,x_0$ subject to the relation
$$
x_1 x_0=q^{a_0a_1} x_0 x_1
$$
and we have the following identities in the natural completion~$\widehat{\mathcal P}_{\cat A}$ of~$\mathcal P_{\cat A}$
$$
[\mathbb E_{q_0}(x_0),\mathbb E_{q_1}(x_1)]=\begin{cases}\mathbb E_{q_1}(x_0x_1),&a_0a_1=1\\
\mathbb E_{q_1}(p_0x_0^{a_0}x_1^{})\mathbb E_{q_0}(p_1x_0^{} x_1^{a_1}),
&a_0a_1=2\\
\mathbb E_{q_1}(p_0 x_0^{a_0}x_1^{})\mathbb E_{q_0}(q_0^{}p_1x_0^2 x_1^{a_1})
\mathbb E_{q_1}(p_0q_1^{} x_0^{a_0}x_1^2)\mathbb E_{q_0}(p_1 x_0^{} x_1^{a_1}),
&a_0a_1=3,
\end{cases}
$$
where $\mathbb E_q(x)=\exp_q(x/(q-1))$ is a quantum dilogarithm and $p_i=q_i^{\binom{a_i}2}$.

Let $\cat A$ be the category of finite length modules over a finite hereditary ring and denote by $\cat A_-$ (respectively $\cat A_+$) the preprojective (respectively, the preinjective) component of $\cat A$.
Note that if  $\cat A$ is {\it hereditary} then  $\cat A_-=\cat A_+=\cat A$  if and only if $\cat A$ is of finite type. Otherwise, 
$\cat A_-\subset \cat A_+^\lort$ and
the triple $({\cat A}_-,{\cat A}_0,{\cat A}_+)$, where ${\cat A}_0=\cat A_-{}^\rort\cap \cat A_+{}^\lort$,   is a factorizing sequence for ${\cat A}$
and satisfies hypotheses of Proposition \ref{pr:factorizable sequence}\eqref{pr:fs.b}. 
This implies the following result.
\begin{corollary}  
\label{cor:hereditary factorization}
Suppose that $\cat A$ is hereditary of infinite type. Then
\begin{enumerate}[{\rm(a)}]
\item\label{cor:hered.a} The Hall algebra $H_{\cat A}$ admits a triangular decomposition $H_{\cat A}\cong H_{{\cat A}_-}\otimes H_{{\cat A}_0}\otimes H_{{\cat A}_+}$.

\item\label{cor:hered.b} The exponential $\Exp_{\cat A}$ admits a factorization $\Exp_{\cat A}=\Exp_{{\cat A}_-}\Exp_{{\cat A}_0}\Exp_{{\cat A}_+}$.
\end{enumerate}
\end{corollary}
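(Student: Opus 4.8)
The plan is to obtain the corollary as an immediate application of Proposition~\ref{pr:factorizable sequence} to the factorizing sequence $(\cat A_-,\cat A_0,\cat A_+)$ singled out in the paragraph preceding the statement. Under the identification $(\cat A_1,\cat A_2,\cat A_3)=(\cat A_-,\cat A_0,\cat A_+)$, part~\eqref{cor:hered.b} of the corollary is exactly the conclusion of Proposition~\ref{pr:factorizable sequence}\eqref{pr:fs.a}, that is $\Exp_{\cat A}=\Exp_{\cat A_-}\Exp_{\cat A_0}\Exp_{\cat A_+}$, while part~\eqref{cor:hered.a} is exactly the conclusion of Proposition~\ref{pr:factorizable sequence}\eqref{pr:fs.b}, that is $H_{\cat A}\cong H_{\cat A_-}\tensor H_{\cat A_0}\tensor H_{\cat A_+}$. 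Thus the whole matter reduces to verifying the hypotheses of both parts of the proposition for this triple.

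The hypothesis of part~\eqref{pr:fs.a} is simply that $(\cat A_-,\cat A_0,\cat A_+)$ be a factorizing sequence, which is precisely the structural assertion made for hereditary $\cat A$ of infinite type in the preceding paragraph; I would invoke it directly. For part~\eqref{pr:fs.b} one needs in addition the orthogonality $\cat A_i\subset\cat A_j{}^\lort$ for all $i<j$, i.e.\ the three inclusions $\cat A_-\subset\cat A_0{}^\lort$, $\cat A_-\subset\cat A_+{}^\lort$ and $\cat A_0\subset\cat A_+{}^\lort$. Two of these are purely formal consequences of the definition $\cat A_0=\cat A_-{}^\rort\cap\cat A_+{}^\lort$: the inclusion $\cat A_0\subset\cat A_+{}^\lort$ is immediate, and since $\cat A_0\subset\cat A_-{}^\rort$ says exactly that $\Ext^1_{\cat A}(C,B)=0$ for all $C\in\Ob\cat A_-$ and $B\in\Ob\cat A_0$, the definition of $\lort$ gives $\cat A_-\subset\cat A_0{}^\lort$ as well.

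The only genuinely homological condition is the remaining inclusion $\cat A_-\subset\cat A_+{}^\lort$, equivalently $\Ext^1_{\cat A}(M,N)=0$ for $M$ preprojective and $N$ preinjective. This is the standard vanishing for hereditary categories: by the Auslander--Reiten formula $\Ext^1_{\cat A}(M,N)$ is dual to $\Hom_{\cat A}(N,\dtr M)$, the translate $\dtr M$ is again preprojective (and $\Ext^1_{\cat A}(M,-)=0$ outright when $M$ is projective), while $\Hom_{\cat A}(N,\dtr M)=0$ because there are no nonzero morphisms from a preinjective object to a preprojective one. As this inclusion is already recorded in the preceding paragraph, I would again simply cite it.

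The main obstacle lies not in the corollary but in the structural input on which it rests, namely the claim that $(\cat A_-,\cat A_0,\cat A_+)$ really is a factorizing sequence. Unwinding the recursive definition, this requires that every finite-length module $M$ possess a \emph{unique} preinjective subobject $M''$ with $M/M''$ lying in the preprojective-plus-regular subcategory $\cat A_{\le 2}$, and that each object of $\cat A_{\le 2}$ in turn possess a unique regular subobject with preprojective quotient. Uniqueness is the crux, and it is forced by the vanishing of all ``backward'' Hom-spaces among the three components---from preinjective to regular or preprojective, and from regular to preprojective---together with the closure of each component under the appropriate subobjects, quotients, and extensions (which for the hereditary ring follows from $\tau$-stability of the regular part and the fact that quotients of injectives are injective). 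Granting these, the two-step torsion-pair structure is determined and the factorizing-sequence property follows; since the excerpt takes this structural fact as established, the proof of the corollary is then formal.
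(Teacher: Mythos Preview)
Your proposal is correct and follows exactly the route sketched in the paragraph preceding the corollary: take the asserted factorizing sequence $(\cat A_-,\cat A_0,\cat A_+)$ together with the orthogonality $\cat A_-\subset\cat A_+{}^\lort$ (and the two formal inclusions you extract from the definition of~$\cat A_0$), and invoke Proposition~\ref{pr:factorizable sequence}. Your discussion of how the factorizing-sequence property itself is established via the backward Hom-vanishing is also on target.

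The paper's detailed argument (Proposition~\ref{prop:factor}) is a mild streamlining of this. Rather than first proving that $(\cat A_-,\cat A_0,\cat A_+)$ is a factorizing sequence and then feeding this into Proposition~\ref{pr:factorizable sequence}, it applies Proposition~\ref{prop:decomp} directly: one checks $\cat A=\cat A_-\bigvee\cat A_0\bigvee\cat A_+$, the three $\Ext^1$-vanishings, and the three backward $\Hom$-vanishings (all read off from Proposition~\ref{prop:preinj-preproj}), and Proposition~\ref{prop:decomp} delivers both the tensor decomposition and the factorization of~$\Exp_{\cat A}$ in one stroke---and in fact yields the factorizing-sequence property as a byproduct rather than as an input. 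The two arguments are logically equivalent; the paper's version simply avoids invoking the uniqueness-of-subobject condition in the definition of factorizing sequence as a separate step.
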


Finally, we describe $H_{{\cat A}_\pm}$ and $\Exp_{{\cat A}_\pm}$ (and, to some extent, $H_{{\cat A}_0}$ and $\Exp_{{\cat A}_0}$) in the case when $\cat A$ is the representation category of a finite hereditary algebra over~$\kk$.

Number the isomorphism classes of simples $[S_i]$, $1\le i\le r$ in such a way that $\Ext^1_{\cat A}(S_j,S_i)=0$ if~$i<j$.
The elements~$\alpha_i=\gr{S_i}$ form a basis of~$K_0(\cat A)$. Let~$K_0^+(\cat A)$ be the submonoid 
of~$K_0(\cat A)$ generated by the $\alpha_i$, $1\le i\le r$. Since the form $\lr{\cdot,\cdot}$
is non-degenerate
for each~$1\le i\le r$ there exist unique 
$\gamma_{i},\gamma_{-i}\in K_0(\cat A)$ such that for all $1\le j\le r$
$$
\lr{\gamma_{-i},\alpha_j}=q_i^{\delta_{ij}}=\lr{\alpha_j,\gamma_i}
$$
where $q_i=|\End_{\cat A}S_i|$.
Then $\{\gamma_i\}_{1\le i\le r}$ and~$\{\gamma_{-i}\}_{1\le i\le r}$ are bases of~$K_0(\cat A)$.
Let~$c$ be the unique automorphism of~$K_0(\cat A)$ defined by $c(\gamma_{-i})=-\gamma_i$. 

\begin{lemma}\label{lem:gamma}  
\begin{enumerate}[{\rm(a)}]
\item For all~$1\le i\le r$, $\gamma_{\pm i}\in K_0^+(\cat A)$
\item Set 
$$
\Gamma_\pm=\{ \gamma_{\pm i,\pm k}:=c^{\pm k}(\gamma_{\pm i})\,:\, 1\le i\le r,\, k\ge 0\}\cap K_0^+(\cat A).
$$
Then for each $\gamma\in\Gamma_+\cup\Gamma_-$ there exists a unique, up to an isomorphism, indecomposable 
object~$E_\gamma$ with $\gr{E_\gamma}=\gamma$. 
Moreover, an indecomposable~$M$ is preprojective (respectively, preinjective)
if and only if $M\cong E_\gamma$ with $\gamma\in \Gamma_-$ (respectively, $\gamma\in\Gamma_+$).
\item If~$\cat A$ has finitely many isomorphism classes of indecomposables then $\Gamma_+=\Gamma_-$. Otherwise, $\Gamma_+\cap \Gamma_-=\emptyset$ and 
$\gamma_{\pm i,\pm k}\in K_0^+(\cat A)$ for all $1\le i\le r$, $k\ge 0$.
\end{enumerate}
\end{lemma}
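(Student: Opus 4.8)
The plan is to identify the abstractly defined classes $\gamma_{\pm i}$ with concrete module classes, recognize $c$ as the Coxeter transformation, and then read off (a)--(c) from Auslander--Reiten theory. First I would compute the Ringel form against projectives and injectives. Writing $P_i$ (resp.\ $I_i$) for the indecomposable projective (resp.\ injective) with top (resp.\ socle) $S_i$, projectivity gives $\Ext^1(P_i,N)=0$, while exactness of $\Hom(P_i,-)$ together with $\Hom(P_i,S_j)\ne 0\iff i=j$ and $|\Hom(P_i,S_i)|=|\End S_i|=q_i$ yields $|\Hom(P_i,N)|=q_i^{[N:S_i]}$. Hence $\lr{\gr{P_i},\alpha_j}=q_i^{[S_j:S_i]}=q_i^{\delta_{ij}}$, and dually $\lr{\alpha_j,\gr{I_i}}=q_i^{\delta_{ij}}$ using exactness of $\Hom(-,I_i)$. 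By the uniqueness built into the definition of $\gamma_{\pm i}$, this forces $\gamma_{-i}=\gr{P_i}$ and $\gamma_i=\gr{I_i}$. Since the class of any genuine object lies in $K_0^+(\cat A)$, part (a) is immediate.

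Next, the relation $c(\gamma_{-i})=-\gamma_i$ reads $c(\gr{P_i})=-\gr{I_i}$, which is exactly the defining property of the Coxeter transformation; thus $c$ \emph{is} the Coxeter transformation. Consequently, for the Auslander--Reiten translate $\tau=\dtr$ (with inverse $\trd$) one has $\gr{\tau M}=c\,\gr{M}$ for every indecomposable non-projective $M$ and $\gr{\tau^{-1}M}=c^{-1}\gr{M}$ for every indecomposable non-injective $M$. Iterating, $\gr{\tau^{-k}P_i}=c^{-k}\gamma_{-i}=\gamma_{-i,-k}$ and $\gr{\tau^{k}I_i}=c^{k}\gamma_i=\gamma_{i,k}$, so the orbits $\Gamma_\pm$ are realized (when they stay in $K_0^+(\cat A)$) as dimension vectors of preprojective/preinjective modules.

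For part (b) I would invoke that the indecomposable preprojectives (resp.\ preinjectives) are precisely the $\tau^{-k}P_i$ (resp.\ $\tau^{k}I_i$), $k\ge 0$, $1\le i\le r$, and that these are \emph{directing} modules; a directing module is the unique indecomposable with its dimension vector. This gives existence of $E_\gamma$ for each $\gamma\in(\Gamma_+\cup\Gamma_-)\subset K_0^+(\cat A)$, uniqueness among \emph{all} indecomposables (directing-ness), injectivity of the parametrization $(i,k)\mapsto\tau^{\mp k}P_i,I_i$ (distinct directing modules have distinct classes), and the stated characterization: an indecomposable is preprojective iff its class lies in $\Gamma_-$, and preinjective iff in $\Gamma_+$. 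Finally, part (c) is a dichotomy. If $\cat A$ has infinite type then no preprojective is injective (otherwise the preprojective component would be finite and exhaust $\cat A$), so $\tau^{-k}P_i\ne 0$ for all $k$, whence $\gamma_{-i,-k}\in K_0^+(\cat A)$ for all $k\ge 0$ and dually for $\Gamma_+$; moreover the preprojective and preinjective components are disjoint, so their dimension-vector sets are disjoint and $\Gamma_+\cap\Gamma_-=\emptyset$. If $\cat A$ has finite type, every indecomposable is simultaneously preprojective and preinjective, so $\Gamma_+$ and $\Gamma_-$ both equal the set of dimension vectors of all indecomposables, giving $\Gamma_+=\Gamma_-$. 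I expect the main obstacle to be Step (b): pinning down that a preprojective or preinjective indecomposable is determined up to isomorphism by its dimension vector and that the class-map $(i,k)\mapsto\gr{\tau^{\mp k}P_i},\gr{\tau^{k}I_i}$ is injective onto the respective component. This is precisely where the directing-module theory (rigidity together with the structure of the Auslander--Reiten quiver) carries the argument.
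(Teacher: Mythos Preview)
Your proposal is correct and follows essentially the same route as the paper: identify $\gamma_{-i}=\gr{P_i}$, $\gamma_i=\gr{I_i}$ via the defining duality relations, recognize $c$ as the Coxeter transformation so that $\gamma_{\pm i,\pm k}=\gr{(\tau^\mp)^k P_i}$ resp.\ $\gr{(\tau^\pm)^k I_i}$, and then read off (a)--(c) from standard Auslander--Reiten theory in \cite{ARS}*{\S VIII.1--2}. The only cosmetic difference is that where you invoke ``directing modules'' for the uniqueness in (b), the paper simply records this as Proposition~\ref{prop:preinj-preproj}\eqref{prop:preinj-preproj.g} (preprojectives and preinjectives are determined by their class in $K_0$), which is the same fact.
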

\begin{theorem}\label{thm:descr-prepr-Hall} In the notation of Corollary~\ref{cor:hereditary factorization}:
$$H_{{\cat A}_-}=
\overrightarrow{\bigotimes_{\gamma\in\Gamma_-}} \mathbb Q[X_\gamma],\qquad H_{{\cat A}_+}=
\overleftarrow{\bigotimes_{\gamma\in\Gamma_+}} \mathbb Q[X_\gamma]
$$
as vector spaces, while the exponentials $\Exp_{{\cat A}_\pm}$
factor as
$$\Exp_{{\cat A}_-}=\overrightarrow{\prod_{\gamma\in\Gamma_-}}  \Exp_{q_\gamma}(X_\gamma),\qquad 
\Exp_{{\cat A}_+}=\overleftarrow{\prod_{\gamma\in\Gamma_+}}  \Exp_{q_\gamma}(X_\gamma)
$$
where $q_{\gamma_{\pm i,\pm r}}=q_i$, $X_\gamma=[E_\gamma]$ and both products are taken in the total order on~$\Gamma_-$ (respectively, on~$\Gamma_+$) defined
by $\gamma_{-j,-r}<\gamma_{-i,-s}$ (respectively, 
$\gamma_{i,s}<\gamma_{j,r}$) if either $r<s$ or~$r=s$ and $i<j$.
\end{theorem}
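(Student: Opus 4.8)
The plan is to exhibit the ordered family $(\add_{\cat A}E_\gamma)_{\gamma\in\Gamma_-}$ as a factorizing sequence for $\cat A_-$ (and dually for $\cat A_+$) and then invoke Proposition~\ref{pr:factorizable sequence}. First, by Lemma~\ref{lem:gamma} the indecomposables of $\cat A_-$ are precisely the $E_\gamma$, $\gamma\in\Gamma_-$, and each is a preprojective indecomposable over a finite hereditary algebra, hence exceptional: it has no self-extensions and $\End_{\cat A}E_\gamma$ is a finite division ring, so a field. When $\gamma$ lies in the $\tau^{-1}$-orbit of $P_i$ (i.e.\ $\gamma=\gamma_{-i,-k}$), the Auslander--Reiten translate is an equivalence on the non-projective part, so $\End_{\cat A}E_\gamma\cong\End_{\cat A}P_i\cong\End_{\cat A}S_i=\mathbb F_{q_i}$, giving $q_\gamma=q_i$. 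By Lemma~\ref{le:brick} this already yields $\Exp_{\add E_\gamma}=\exp_{q_\gamma}(X_\gamma)$ with $X_\gamma=[E_\gamma]$, and $H_{\add E_\gamma}\cong\mathbb Q[X_\gamma]$ as a vector space.

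The heart of the matter is the directedness of the preprojective component. The key step is to check that the total order in the statement refines the Auslander--Reiten order so that, for $\gamma<\gamma'$ in $\Gamma_-$,
$$\Hom_{\cat A}(E_{\gamma'},E_\gamma)=0,\qquad \Ext^1_{\cat A}(E_\gamma,E_{\gamma'})=0.$$
The level index in the order (the exponent $k$ of $c^{-k}$, corresponding to $\tau^{-k}$) is the Auslander--Reiten level: no nonzero homomorphism runs from a higher $\tau^{-1}$-level back to a lower one, which produces the $\Hom$-vanishing across levels. Within a single level the labeling convention $\Ext^1_{\cat A}(S_j,S_i)=0$ for $i<j$ is exactly what makes the index tiebreak compatible with the directedness of $\Hom(P_j,P_i)$ on projectives (and is propagated to higher levels by the equivalence $\tau^{-1}$). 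The $\Ext$-vanishing then follows from the Auslander--Reiten formula $\Ext^1_{\cat A}(E_\gamma,E_{\gamma'})\cong D\,\overline{\Hom}_{\cat A}(E_{\gamma'},\tau E_\gamma)$ together with the fact that $\tau E_\gamma$ lies strictly below $E_{\gamma'}$ in the order, so the relevant $\Hom$ vanishes (and it is automatic when $E_\gamma$ is projective, since then $\tau E_\gamma=0$). In particular $\add E_\gamma\subset(\add E_{\gamma'})^{\lort}$ whenever $\gamma<\gamma'$, which is precisely the hypothesis of Proposition~\ref{pr:factorizable sequence}\eqref{pr:fs.b}.

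Using this orthogonality I would verify the factorizing-sequence property by peeling off minimal elements. Every object of $\cat A_-$ is a direct sum $\bigoplus_\gamma E_\gamma^{n_\gamma}$ by Krull--Schmidt, so for the minimal element $\gamma_{\min}=[P_r]$ the isotypic summand $E_{\gamma_{\min}}^{n_{\gamma_{\min}}}$ realizes a quotient $M/M''$ with $M''\in\add\{E_\gamma:\gamma>\gamma_{\min}\}$; the vanishing $\Hom_{\cat A}(E_\gamma,E_{\gamma_{\min}})=0$ for $\gamma>\gamma_{\min}$ shows every map $M\to E_{\gamma_{\min}}$ factors through this summand, which forces $M''$ to be the unique subobject of its kind. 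Hence $\bigl(\add E_{\gamma_{\min}},\,\add\{E_\gamma:\gamma>\gamma_{\min}\}\bigr)$ is a factorizing pair, and iterating gives the factorizing sequence. The infiniteness of $\Gamma_-$ is harmless: after fixing a class in $K_0^+(\cat A)$ only the finitely many $E_\gamma$ with $\gamma$ below that class contribute, so in each graded degree the sequence is finite and Proposition~\ref{pr:factorizable sequence} applies verbatim. Part~\eqref{pr:fs.a} then yields $\Exp_{\cat A_-}=\overrightarrow{\prod}_{\gamma\in\Gamma_-}\exp_{q_\gamma}(X_\gamma)$ and part~\eqref{pr:fs.b} yields $H_{\cat A_-}\cong\overrightarrow{\bigotimes}_{\gamma\in\Gamma_-}\mathbb Q[X_\gamma]$ in the prescribed order. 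The statements for $\cat A_+$ follow by applying the identical argument to $\cat A^{\mathrm{op}}$, which interchanges projectives with injectives and the preprojective with the preinjective component and reverses the order, thereby turning $\overrightarrow{\phantom{x}}$ into $\overleftarrow{\phantom{x}}$.

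The main obstacle is the second paragraph together with the bookkeeping at the start of the third: establishing that the explicitly-defined total order (via $c^{\pm k}$ with the index tiebreak) is genuinely a linear refinement of the directedness order carrying the stated $\Hom$/$\Ext$ vanishing, and confirming that the peeled-off subobjects are canonical (independent of the chosen Krull--Schmidt decomposition). The former is Auslander--Reiten-theoretic and reduces to tracking $c=\Phi$ against $\tau$ on $K_0(\cat A)$; the latter is handled by identifying the subobjects with traces of the categories $\add\{E_\gamma:\gamma\ge\gamma_0\}$, whose directedness makes them order-convex, while the $K_0^+(\cat A)$-grading tames the passage to infinitely many factors.
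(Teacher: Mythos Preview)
Your proposal is correct and follows essentially the same route as the paper: identify $\Gamma_\pm$ with the preprojective/preinjective indecomposables via the Coxeter transformation, establish the directedness (the $\Hom$/$\Ext$ vanishing you state is exactly the content of the paper's Proposition~\ref{prop:preinj-preproj} and Lemma~\ref{lem:norm-ord}, and your total order is what the paper calls the \emph{normal order}), and then factorize. The only cosmetic difference is packaging: the paper bundles your ``peeling off'' argument into Proposition~\ref{prop:decomp}\eqref{prop:decomp.b} and Proposition~\ref{prop:factor} and then invokes those, whereas you re-derive the factorizing-sequence property by hand and appeal to Proposition~\ref{pr:factorizable sequence}; both paths rest on the same Auslander--Reiten orthogonality and the same use of Corollary~\ref{cor:deg-Rie}.
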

We prove Lemma~\ref{lem:gamma}, Theorem~\ref{thm:descr-prepr-Hall} and its Corollary in~\S\ref{sec:Coxeter}.

It turns out that the factorization of~$\Exp_{\cat A}$ allows one to obtain an elementary proof of Ringel's result (cf.~\cite{CX}) that the~$[E_\gamma]$ with 
$\gamma\in\Gamma_+\cup\Gamma_-$ are contained in the composition algebra~$C_{\cat A}$ of~$\cat A$.
After Ringel (\cite{Ringel}), the composition algebra $C_{\cat A}$ of~$\cat A$, that is, the subalgebra of $H_{\cat A}$ generated by the $[S_i]$, $1\le i\le r$,
has the following presentation (cf.~\eqref{eq:Serre})
$$(\ad_{(1,q_i^{},q_i^{2_{}},\dots,q_i^{-a_{ij}})} [S_i])([S_j])=0,\qquad 
(\ad^*_{(1,q_j^{},\dots,q_j^{-a_{ji}})} [S_j])([S_i]),\qquad j<i
$$
(the repeated quantum commutators $(\ad_{(q_0,\dots,q_m)}x)(y)$ and $(\ad^*_{(q_0,\dots,q_m)}x)(y)$ are defined in~\S\ref{sec:adjs}).  
In fact, this algebra is isomorphic to a Sevostyanov's analogue of the upper half of the quantized universal enveloping algebra 
corresponding to the symmetrized Cartan matrix $DA$ (\cite{Sev}).
\begin{corollary}\label{cor:compalg}
For all $\gamma\in \Gamma_+\cup\Gamma_-$, $[E_\gamma]\in C_{\cat A}$.
\end{corollary}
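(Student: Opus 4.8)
The plan is to use the factorization $\Exp_{\cat A_-}=\overrightarrow{\prod_{\gamma\in\Gamma_-}}\Exp_{q_\gamma}(X_\gamma)$ from Theorem \ref{thm:descr-prepr-Hall} together with the pentagonal relations, and to extract the $[E_\gamma]$ one at a time as iterated commutators of the simples. Since $C_{\cat A}$ is by definition the subalgebra generated by the $[S_i]$, and since each $[S_i]$ is itself some $X_\gamma$ with $\gamma=\alpha_i$ (the simple $S_i$ is preprojective or preinjective of minimal "depth" $k=0$), it suffices to produce every $X_\gamma=[E_\gamma]$ as a noncommutative polynomial in the $[S_i]$. The factorization of the exponentials, read as an identity in the completed Hall algebra, is precisely the tool that converts the commutator relations among the $\exp_{q_i}([S_i])$ into expressions for the higher $X_\gamma$.

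First I would set up an induction on the total order on $\Gamma_-$ (and dually on $\Gamma_+$) used in Theorem \ref{thm:descr-prepr-Hall}, so that the minimal elements are exactly the $\alpha_i=\gr{S_i}$, which give $X_{\alpha_i}=[S_i]\in C_{\cat A}$ outright. For the inductive step, I would fix $\gamma\in\Gamma_-$ and consider the two-term (or rank-two) situation obtained from the relevant pair of simples: the explicit commutator formulas displayed just before Theorem \ref{thm:descr-prepr-Hall}, namely
$$
[\exp_{q_0}(X_{\alpha_0}),\exp_{q_1}(X_{\alpha_1})]=\exp_{q_1}(X_{\alpha_{10}})\cdots,
$$
exhibit each new root vector $X_{\alpha_{ij}}$ as the first-order part (in a suitable sense) of a commutator of exponentials of previously constructed $X$'s. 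Comparing the graded components in $\widehat H_{\cat A}$ with respect to the $K_0(\cat A)$-grading, the component of degree $\gamma$ on the right-hand side is a $\ZZ[q^{\pm1}]$-linear combination of products of the lower $X_\beta$ (all with $\beta$ strictly below $\gamma$), plus the single term $X_\gamma$ with an invertible scalar coefficient. Solving for $X_\gamma$ then expresses it, over $\mathbb Q$ (after inverting the relevant $q_i$-factors, which are units), as a polynomial in the lower $X_\beta$, hence in $C_{\cat A}$ by the induction hypothesis.

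The mechanism that produces \emph{all} of $\Gamma_-$ this way, and not merely the rank-two roots, is the Coxeter automorphism $c$: since $\gamma_{-i,-k}=c^k(\gamma_{-i})$ and the braid/pentagonal relations are compatible with $c$, each application of a reflection moves us one step up in the order and realizes the next layer of root vectors as commutators of the ones already obtained. Concretely, I would argue that the reordering of the product $\overrightarrow{\prod}\Exp_{q_\gamma}(X_\gamma)$ effected by a simple reflection (a pentagonal pair in the sense of Proposition \ref{prop:pent.pair}) expresses the newly appearing factors $\Exp_{q_\gamma}(X_\gamma)$ in terms of the old ones; extracting leading terms degree by degree gives the recursion. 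The preinjective case $\Gamma_+$ is entirely dual, using the opposite order and $c^{-1}$.

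The main obstacle I anticipate is bookkeeping rather than conceptual: one must verify that when comparing the degree-$\gamma$ components, the scalar coefficient of the isolated $X_\gamma$ is indeed a unit in the relevant coefficient ring (a monomial in the $q_i$ times a nonzero $q$-integer), so that the solving step is legitimate, and that every product of lower root vectors appearing genuinely has all factors of degree $<\gamma$ in the chosen total order. Both points follow from the multiplicativity of the Ringel form against the grading and from the fact that $c$ preserves $K_0^+(\cat A)$ on $\Gamma_\pm$ (Lemma \ref{lem:gamma}), but they require care to state cleanly. Once these are in place, the induction closes and yields $[E_\gamma]\in C_{\cat A}$ for every $\gamma\in\Gamma_+\cup\Gamma_-$.
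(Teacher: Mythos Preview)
Your instinct to compare $K_0$-graded components of the exponential factorizations is sound and is in fact what the paper does, but the route you take through rank-two pentagonal identities and a Coxeter-compatible ``reflection'' mechanism does not work as written. The rank-two commutator formulas you quote hold only inside a rank-two subcategory of finite type; they do not by themselves produce the preprojectives of a higher-rank quiver. And the step where ``each application of a reflection moves us one step up'' has no content here: $c$ acts on $K_0(\cat A)$, not on $H_{\cat A}$, and no braid-group or reflection-functor action on the Hall algebra is developed in this paper. Making that precise would amount to importing the BGP/Ringel machinery (as in Chen--Xiao), which is exactly what the paper advertises it is avoiding.

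The paper's argument (Proposition~\ref{prop:compalg}) is shorter and bypasses all of this. From $\Exp_{\cat A}=\prod_i \exp_{q_i}([S_i])$ (Corollary~\ref{cor:sorder}) one has $E_{\cat A}=C_{\cat A}$. The factorizing pair $(\cat A_-,\cat A_{\ge 0})$ satisfies $K_0^+(\cat A_-)\cap K_0^+(\cat A_{\ge 0})=0$ (Lemma~\ref{lem:monoid-split}, a two-line consequence of Proposition~\ref{prop:Coxeter}); this disjointness is the key ingredient you never invoke, and it is precisely what lets Lemma~\ref{lem:comp-exp} conclude $E_{\cat A_-}\subset E_{\cat A}=C_{\cat A}$ by induction on the partial order on~$K_0^+(\cat A)$. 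Finally, since a preprojective indecomposable is determined up to isomorphism by its class in~$K_0$ (Proposition~\ref{prop:preinj-preproj}\eqref{prop:preinj-preproj.g}) and these generate $H_{\cat A_-}$, Lemma~\ref{lem:comp-exp-1} gives $H_{\cat A_-}=E_{\cat A_-}\subset C_{\cat A}$; the preinjective side is dual. Note that the ``extract the leading term degree by degree'' idea in your last paragraph is exactly Lemmata~\ref{lem:comp-exp} and~\ref{lem:comp-exp-1}, but applied to $\Exp_{\cat A}$ and $\Exp_{\cat A_-}$ themselves rather than to a commutator; once you do that, no pentagonal relations or reflections are needed at all.
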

\noindent
This statement is proven in~\S\ref{sec:compalg}.

\subsection*{Acknowledgments}
An important part of this work was done while both authors were visiting the Massachusetts 
Institute of Technology, and it is our pleasure to thank the Department of Mathematics for its hospitality
and Pavel Etingof for his support and stimulating discussions. We are grateful to Jiarui Fei for inspiring conversations.

\section{Preliminaries}

We begin by fixing some notation. Let~$\cat A$ be an abelian category. We will mostly assume that $\cat A$
is a $\kk$-category for some field
$\kk$, that is, there exists an exact functor $\cat A\to \operatorname{Vect}_\kk$, and is essentially small. We denote by $\Iso\cat A$ the set of isomorphism classes of objects in~$\cat A$. Given
an object~$M\in\Ob\cat A$, we denote by~$[M]\in\Iso\cat A$ its isomorphism class. 
For any $\mathcal S\subset \Iso\cat A$, let $\add\mathcal S$ be the smallest additively closed full subcategory of~$\cat A$
containing all direct summands of all objects~$M\in\Ob\cat A$ with $[M]\in \mathcal S$. We set~$\add_{\cat A} M=\add_{\cat A}\{[M]\}$ for any object~$M$ in~$\cat A$.
We denote by $\cat A(\mathcal S)$ the smallest full subcategory of~$\cat A$ containing all objects~$M$ with $[M]\in\mathcal S$ and 
closed under extensions.
We denote by~$K_0^+(\cat A)$ the submonoid of~$K_0(\cat A)$
generated by the $|M|$, $[M]\in\Iso\cat A$ and define a partial order~$\le $ on~$K_0^+(\cat A)$ by $\lambda\le \mu$ if~$\mu-\lambda\in K_0^+(\cat A)$.
If~$\cat B$ is a full subcategory of~$\cat A$ closed under extensions, we denote by~$K_0(\cat B)$ the subgroup of~$K_0(\cat A)$ 
generated by the~$|M|$, $[M]\in\Iso\cat B$, and we set~$K_0^+(\cat B)=K_0^+(\cat A)\cap K_0(\cat B)$.

\subsection{}
Recall that an object $M$ in~$\cat A$ is called {\em indecomposable} if $M$ is not isomorphic to 
a direct sum of two non-zero subobjects.
The category~$\cat A$ is said to be Krull-Schmidt if
every object in~$\cat A$ can be written, uniquely (up to an isomorphism) as a direct sum of finitely many 
indecomposable objects. If~$\cat A$ is Krull-Schmidt, we denote by $\Ind\cat A\subset\Iso\cat A$ the 
set of isomorphism classes of all indecomposable objects in~$\cat A$.
The following simple Lemma will be used repeatedly in the sequel.
\begin{lemma}\label{lem:orth-cond}
Suppose that $\cat A$ is Krull-Schmidt and $\Ind\cat A=\mathcal I_1\bigsqcup\cdots \bigsqcup\mathcal I_r$ such that for all $1\le i<j\le r$, 
$\Hom_{\cat A}(M_i,M_j)=0$ if
$[M_s]\in\mathcal I_s$. Then the categories $\cat A_s=\add_{\cat A}\mathcal I_s$, $1\le s\le r$,  are closed under extensions.
\end{lemma}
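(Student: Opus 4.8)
The plan is to unwind the definition of $\cat A_s=\add_{\cat A}\mathcal I_s$: since $\cat A$ is Krull--Schmidt, an object lies in $\cat A_s$ precisely when every one of its indecomposable direct summands has isomorphism class in $\mathcal I_s$. So to prove closure under extensions it suffices to start from a short exact sequence
$$
0\to X\xrightarrow{\,f\,} Y\xrightarrow{\,g\,} Z\to 0
$$
with $X,Z\in\cat A_s$, fix an indecomposable direct summand $N$ of $Y$ with $[N]\in\mathcal I_t$, and show $t=s$. Write $\iota\colon N\to Y$ and $\pi\colon Y\to N$ for the inclusion and projection of this summand, so $\pi\iota=\mathrm{id}_N$. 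I would rule out $t<s$ and $t>s$ separately; the two cases are dual to one another via the two directions of the orthogonality hypothesis.

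Suppose first $t<s$. Since every indecomposable summand of $Z$ has class in $\mathcal I_s$ and the hypothesis gives $\Hom_{\cat A}(M_t,M_s)=0$ for $t<s$, additivity yields $\Hom_{\cat A}(N,Z)=0$, whence $g\iota=0$. Thus $\iota$ factors through $\ker g=\operatorname{im}f$; as $f$ is monic this produces $h\colon N\to X$ with $fh=\iota$, and then $(\pi f)h=\pi\iota=\mathrm{id}_N$ exhibits $h$ as a split monomorphism. Hence $N$ is a direct summand of $X\in\cat A_s$, which by Krull--Schmidt forces $[N]\in\mathcal I_s$, contradicting $t<s$.

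Now suppose $t>s$. Dually, every indecomposable summand of $X$ has class in $\mathcal I_s$ and $\Hom_{\cat A}(M_s,M_t)=0$ for $s<t$, so $\Hom_{\cat A}(X,N)=0$ and therefore $\pi f=0$, i.e.\ $\operatorname{im}f\subseteq\ker\pi$. Since $\ker g=\operatorname{im}f$, the projection $\pi$ kills $\ker g$ and so descends to $\bar\pi\colon Y/\ker g\to N$ with $\bar\pi q=\pi$, where $q\colon Y\to Y/\ker g$ is the quotient map. Setting $\bar\iota=q\iota$ gives $\bar\pi\bar\iota=\pi\iota=\mathrm{id}_N$, so $N$ is a direct summand of $Y/\ker g\cong Z$ (the isomorphism coming from exactness and surjectivity of $g$), again forcing $[N]\in\mathcal I_s$ and contradicting $t>s$. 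Therefore $t=s$ for every summand $N$, so $Y\in\cat A_s$ and $\cat A_s$ is closed under extensions.

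The conceptual heart of the argument is the orthogonality, which pushes any ``stray'' summand $N$ of $Y$ off one end or the other of the sequence. I expect the only genuinely delicate point to be making the summand-transfer precise in an arbitrary abelian category rather than a module category --- namely the factorization through the kernel in the first case and the descent to the quotient in the second --- together with the use of Krull--Schmidt uniqueness to conclude that a summand of $X$ or of $Z$ inherits membership in $\mathcal I_s$. This is bookkeeping rather than a substantive obstacle, consistent with the lemma being labeled ``simple.''
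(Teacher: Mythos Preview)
Your proof is correct and follows essentially the same strategy as the paper: use the orthogonality hypothesis to show that no indecomposable from $\mathcal I_t$ with $t\ne s$ can appear as a summand of the middle term. The paper's execution is slightly more streamlined: rather than explicitly constructing splittings, it applies $\Hom_{\cat A}(Y,-)$ (respectively $\Hom_{\cat A}(-,Z)$) to the short exact sequence and uses left-exactness to conclude directly that $\Hom_{\cat A}(Y,X)=0$ for $[Y]\in\mathcal I_t$, $t<s$ (respectively $\Hom_{\cat A}(X,Z)=0$ for $[Z]\in\mathcal I_k$, $k>s$), which immediately rules out such $Y$ or $Z$ as summands.
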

\begin{proof}
Consider a short exact sequence 
$$
0\to M\to X\to N\to 0
$$
with $M,N\in\Ob\cat A_s$. Applying $\Hom_{\cat A}(Y,-)$ (respectively, $\Hom_{\cat A}(-,Z)$) with $Y$, $Z$ indecomposable such that
$[Y]\in\mathcal I_t$ (respectively, $[Z]\in\mathcal I_k$) and $t<s<k$, we conclude that $\Hom_{\cat A}(Y,X)=0$ (respectively,
$\Hom_{\cat A}(X,Z)=0$),
hence $X$ cannot have~$Y$ and~$Z$ as its indecomposable summands. 
Thus, the isomorphism classes of
all indecomposable summands of~$X$ lie in~$\mathcal I_s$ and so $X\in\Ob\cat A_s$. 
\end{proof}
If~$\cat A_i$, $i\in I$ are full additive subcategories of $\cat A$ such that the additive monoid $\Iso\cat A$
is the direct sum of its submonoids $\Iso\cat A_i$, we write $\cat A=\bigvee_{i\in I}\cat A_i$. A typical example is provided 
by a partition of the set~$\Ind\cat A$ if $\cat A$ is Krull-Schmidt (and so $\Iso\cat A$ is a 
free monoid generated by $\Ind\cat A$) if~$\Ind \cat A=\bigsqcup_{i\in I} \mathcal S_i$ then 
$\cat A=\bigvee_{i\in I} \add_{\cat A}\mathcal S_i$. Also, 
we denote by~$\cat B_1\cap\cat B_2$ the full subcategory 
of~$\cat A$ whose objects are in~$\cat B_1$ and~$\cat B_2$.

\subsection{}
Given $M,N,K\in\Ob\cat A$, set
$$
\mathcal P^K_{MN}=\{ (f,g)\in\Hom_{\cat A}(N,K)\times \Hom_{\cat A}(K,M)\,:\, \ker g=\operatorname{Im}f\}.
$$
The group $\Aut_{\cat A}(M)\times\Aut_{\cat A}(N)$ acts freely on~$\mathcal P^K_{MN}$ and 
the set of orbits identifies with 
$$
\mathcal F^K_{MN}=\{ U\subset K\,:\, U\cong N,\,K/U\cong M\}.
$$
It is easy to see that isomorphisms of objects $M\to M'$, $N\to N'$ and~$K\to K'$
induce bijections between the sets $\mathcal F^K_{MN}$ and~$\mathcal F^{K'}_{M'K'}$.

Suppose that $\cat A$ is finitary, that is, $\Hom_{\cat A}(M,N)$ and $\Ext^1_{\cat A}(M,N)$ are 
finite sets for all $M,N\in\Ob\cat A$. 
\begin{definition}[Ringel \cite{Ringel}]
The Hall algebra $H_{\cat A}$ of~$\cat A$ is the $\mathbb Q$-vector 
space with the basis $[M]\in\Iso\cat A$ and with the multiplication defined by
$$
[M][N]=\sum_{[K]\in\Iso\cat A} F_{MN}^K [K].
$$
\end{definition}
The number $F_{MN}^K:=\#\mathcal F_{MN}^K$ is independent of representatives of isomorphism classes.

A Hall algebra can be defined for an exact category. In particular, if~$\cat B$ is a full subcategory of~$\cat A$ and is closed under extensions,
then we can consider its Hall algebra~$H_{\cat B}$ which identifies with the subspace of~$H_{\cat A}$ spanned by the $[M]\in\Iso\cat B$.
Note that $H_{\cat B}$ is graded by $K_0(\cat B)$
$$
H_{\cat B}=\bigoplus_{\gamma\in K_0(\cat B)} H_{\cat B,\gamma},
$$
where $H_{\cat B,\gamma}$ is spanned by the~$[M]\in\Iso\cat B$ with~$|M|=\gamma$. Then the completion~$\widehat H_{\cat B}$ of~$H_{\cat B}$
is defined by
$$
\widehat H_{\cat B}=\prod_{\gamma\in K_0(\cat B)} H_{\cat B,\gamma}.
$$

Clearly, there is a natural surjective map $\mathcal P^K_{MN}\to \Ext^1_{\cat A}(M,N)_K$, where 
$\Ext^1_{\cat A}(M,N)_K$ denotes the set of equivalence classes of short exact sequences
$$
0\to N\to E\to M\to 0
$$
with~$E\cong K$. The fiber of that map is given by $\Aut_{\cat A}(K)/\operatorname{Stab}_{\Aut_{\cat A}K}(f,g)$
and it can be shown that the map $\Hom_{\cat A}(M,N)\to \operatorname{Stab}_{\Aut_{\cat A}K}(f,g)$,
$\psi\mapsto 1+f\psi g$ is a bijection. This yields
\begin{proposition}[Riedtmann's formula \cite{Rie}]\label{prop:Rie}
Let~$\cat A$ be a finitary category. Then for all $M,N,K\in\Ob\cat A$
\begin{equation}\label{eq:Rie}
 F_{MN}^K=\frac{|\Ext^1_{\cat A}(M,N)_K||\Aut_{\cat A}(K)|}{|\Aut_{\cat A}(M)\times\Aut_{\cat A}(N)\times
\Hom_{\cat A}(M,N)|} 
\end{equation}
\end{proposition}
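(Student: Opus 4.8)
The plan is to compute the cardinality of the auxiliary set $\mathcal P^K_{MN}$ in two different ways and equate the two answers. The key is that $\mathcal P^K_{MN}$ carries two commuting group actions: a free action of $\Aut_{\cat A}(M)\times\Aut_{\cat A}(N)$ whose orbit space is $\mathcal F^K_{MN}$, and an action of $\Aut_{\cat A}(K)$ whose orbit space is $\Ext^1_{\cat A}(M,N)_K$. Since every pair $(f,g)$ consists of a monomorphism $f$ and an epimorphism $g$ forming a short exact sequence $0\to N\xrightarrow{f} K\xrightarrow{g} M\to 0$, both descriptions are available.

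First I would use freeness: by the identification of orbits with $\mathcal F^K_{MN}$ stated above, and because the action of $\Aut_{\cat A}(M)\times\Aut_{\cat A}(N)$ is free, every orbit has size $|\Aut_{\cat A}(M)|\,|\Aut_{\cat A}(N)|$, so
$$|\mathcal P^K_{MN}|=|\Aut_{\cat A}(M)|\,|\Aut_{\cat A}(N)|\cdot F^K_{MN}.$$
Next I would analyze the surjection $\pi\colon\mathcal P^K_{MN}\to\Ext^1_{\cat A}(M,N)_K$ sending $(f,g)$ to the class of its short exact sequence. Two pairs have the same image exactly when they differ by an automorphism of $K$, so the fibers of $\pi$ are the orbits of the action $u\cdot(f,g)=(uf,gu^{-1})$ of $\Aut_{\cat A}(K)$; hence each fiber over a class containing $(f,g)$ has size $|\Aut_{\cat A}(K)|/|\operatorname{Stab}_{\Aut_{\cat A}K}(f,g)|$. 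Once I show the stabilizer has size independent of $(f,g)$, summing over fibers and comparing with the previous count yields
$$|\Aut_{\cat A}(M)|\,|\Aut_{\cat A}(N)|\cdot F^K_{MN}=|\Ext^1_{\cat A}(M,N)_K|\cdot\frac{|\Aut_{\cat A}(K)|}{|\operatorname{Stab}_{\Aut_{\cat A}K}(f,g)|}.$$

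The hard part will be identifying the stabilizer, namely proving that $\psi\mapsto 1+f\psi g$ is a bijection $\Hom_{\cat A}(M,N)\to\operatorname{Stab}_{\Aut_{\cat A}K}(f,g)$, since this is where the exactness $\Im f=\ker g$ (equivalently $gf=0$) genuinely enters. For well-definedness I would observe that $f\psi g$ is a square-zero endomorphism of $K$ (as $gf=0$), so $1+f\psi g$ is invertible with inverse $1-f\psi g$ and fixes both $f$ and $g$, hence lies in the stabilizer; injectivity is immediate from $f$ monic and $g$ epic. For surjectivity, given $u$ in the stabilizer the relations $uf=f$ and $gu=g$ read $(u-1)f=0$ and $g(u-1)=0$; the latter forces $\Im(u-1)\subset\ker g=\Im f$, so $u-1$ factors as $fv$ for a unique $v\colon K\to N$ since $f$ is monic, and then $(u-1)f=0$ gives $vf=0$, i.e. $\ker g\subset\ker v$, whence $v=\psi g$ for a unique $\psi\in\Hom_{\cat A}(M,N)$ since $g$ is epic, giving $u-1=f\psi g$. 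This yields $|\operatorname{Stab}_{\Aut_{\cat A}K}(f,g)|=|\Hom_{\cat A}(M,N)|$; substituting into the displayed identity and solving for $F^K_{MN}$ gives Riedtmann's formula~\eqref{eq:Rie}. The only subtlety I anticipate is carrying out the factorization argument purely in terms of images, kernels, monos and epis, so that it remains valid in an arbitrary abelian category rather than only for modules.
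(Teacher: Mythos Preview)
Your proposal is correct and follows essentially the same approach as the paper's own argument: count $|\mathcal P^K_{MN}|$ once via the free $\Aut_{\cat A}(M)\times\Aut_{\cat A}(N)$-action with orbit space $\mathcal F^K_{MN}$, once via the surjection to $\Ext^1_{\cat A}(M,N)_K$ with fibers $\Aut_{\cat A}(K)/\operatorname{Stab}_{\Aut_{\cat A}K}(f,g)$, and identify the stabilizer with $\Hom_{\cat A}(M,N)$ via $\psi\mapsto 1+f\psi g$. In fact you supply more detail than the paper, which merely asserts the stabilizer bijection; your well-definedness, injectivity, and surjectivity arguments are all correct and valid in any abelian category.
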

We note the following simple corollary which will be used repeatedly.
\begin{corollary}\label{cor:deg-Rie}
\begin{enumerate}[{\rm(a)}]
 \item \label{cor:deg-Rie.a}
Suppose that $\Hom_{\cat A}(N,M)=0=\Ext^1_{\cat A}(M,N)$. Then $[M][N]=[M\oplus N]$.
\item\label{cor:deg-Rie.b} Let $E$ be such that $\End_{\cat A} E$ is a field and $\Ext^1_{\cat A}(E,E)=0$. Then
$[E^{\oplus a}]=\dfrac{[E]^{a}}{[a]_{q_E}!}$ where $q_E=|\End_{\cat A}E|$.
\end{enumerate}
\end{corollary}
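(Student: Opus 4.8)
The plan is to read off both parts directly from Riedtmann's formula \eqref{eq:Rie}, using the vanishing $\Ext^1$ hypotheses to isolate a single structure constant in each case.

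For part (a), I would first note that $\Ext^1_{\cat A}(M,N)=0$ forces every short exact sequence $0\to N\to E\to M\to 0$ to split, so $E\cong M\oplus N$. Consequently, in the Hall product $[M][N]=\sum_{[K]}F_{MN}^K[K]$ the set $\Ext^1_{\cat A}(M,N)_K$ is empty unless $[K]=[M\oplus N]$, and for that $K$ it is a single point. By \eqref{eq:Rie} it then remains only to check that the displayed fraction equals $1$, i.e. that
$$
|\Aut_{\cat A}(M\oplus N)|=|\Aut_{\cat A}M|\cdot|\Aut_{\cat A}N|\cdot|\Hom_{\cat A}(M,N)|.
$$
Here is where the other hypothesis enters: writing an endomorphism of $M\oplus N$ as a $2\times 2$ matrix of homomorphisms, the condition $\Hom_{\cat A}(N,M)=0$ makes every such matrix lower triangular, with diagonal entries in $\End_{\cat A}M$ and $\End_{\cat A}N$ and strictly-lower entry ranging over $\Hom_{\cat A}(M,N)$. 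A short direct computation shows that such a triangular endomorphism is a unit of $\End_{\cat A}(M\oplus N)$ exactly when its two diagonal blocks are invertible (one constructs the inverse explicitly, again lower triangular), while the lower-left block is unconstrained. Counting yields the displayed product, and substitution into \eqref{eq:Rie} cancels numerator against denominator, giving $F_{MN}^{M\oplus N}=1$ and hence $[M][N]=[M\oplus N]$.

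For part (b), set $q=q_E$. Since $\End_{\cat A}E$ is a field of order $q$ and $\Ext^1_{\cat A}(E,E)=0$, one has $\End_{\cat A}(E^{\oplus a})\cong M_a(\End_{\cat A}E)$, so $\Aut_{\cat A}(E^{\oplus a})\cong\GL_a(\FF_q)$. I would induct on $a$, the case $a=1$ being trivial, using $[E]^a=[E]^{a-1}[E]=[a-1]_q!\,[E^{\oplus(a-1)}][E]$ from the inductive hypothesis. Because $\Ext^1_{\cat A}(E^{\oplus(a-1)},E)\cong\Ext^1_{\cat A}(E,E)^{a-1}=0$, the product $[E^{\oplus(a-1)}][E]$ is again supported only on $[E^{\oplus a}]$, and \eqref{eq:Rie} produces the single structure constant
$$
F_{E^{\oplus(a-1)},E}^{E^{\oplus a}}=\frac{|\GL_a(\FF_q)|}{|\GL_{a-1}(\FF_q)|\cdot(q-1)\cdot q^{a-1}}.
$$
Invoking $|\GL_a(\FF_q)|/|\GL_{a-1}(\FF_q)|=q^{a-1}(q^a-1)$ together with $|\Hom_{\cat A}(E^{\oplus(a-1)},E)|=q^{a-1}$, this collapses to $(q^a-1)/(q-1)=[a]_q$. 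Hence $[E]^a=[a]_q[a-1]_q!\,[E^{\oplus a}]=[a]_q!\,[E^{\oplus a}]$, which rearranges to the asserted $[E^{\oplus a}]=[E]^a/[a]_{q_E}!$. Note that here part (a) is \emph{not} directly applicable, since $\Hom_{\cat A}(E,E^{\oplus(a-1)})\neq 0$; the computation must go through \eqref{eq:Rie} afresh.

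I expect the only genuinely non-formal step to be the automorphism count in part (a): without a concrete ambient matrix ring one must justify that a lower-triangular endomorphism of $M\oplus N$ is a unit precisely when its diagonal blocks are, and that the strictly-lower entry is free. Once this triangular-invertibility bookkeeping is settled, part (a) becomes pure substitution and part (b) reduces to the order formula for $\GL_a(\FF_q)$ and a one-line induction.
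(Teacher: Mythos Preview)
Your proposal is correct and follows essentially the same approach as the paper's proof: both parts use Riedtmann's formula \eqref{eq:Rie} directly, with part~(a) reducing to the triangular automorphism count (which the paper states as $\Aut_{\cat A}(M\oplus N)\cong\Aut_{\cat A}(M)\times\Aut_{\cat A}(N)\times\Hom_{\cat A}(M,N)$ without further justification, while you spell it out) and part~(b) proceeding by the same induction via $\Aut_{\cat A}(E^{\oplus a})\cong\GL_a(\End_{\cat A}E)$ and the $\GL$ order ratio. The only cosmetic difference is that the paper computes $[E^{\oplus a}][E]=[a+1]_{q_E}[E^{\oplus(a+1)}]$ rather than your shifted $[E^{\oplus(a-1)}][E]=[a]_{q_E}[E^{\oplus a}]$.
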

\begin{proof}
To prove~\eqref{cor:deg-Rie.a}, observe that
$|\Ext^1_{\cat A}(M,N)_{M\oplus N}|=1$. Then $\Hom_{\cat A}(N,M)=0$ implies that
$$\Aut_{\cat A}(M\oplus N)\cong\Aut_{\cat A}(M)\times\Aut_{\cat A}(N)\times
\Hom_{\cat A}(M,N),$$ 
and it remains to apply~\eqref{eq:Rie}. 

For part~\eqref{cor:deg-Rie.b}, note that $\Aut_{\cat A}E^{\oplus a}\cong \operatorname{GL}(a,\End_{\cat A}E)$,
hence 
$$
\frac{|\Aut_{\cat A}(E^{\oplus (a+1)})|}{|\Aut_{\cat A}(E^{\oplus a})||\Aut_{\cat A}E|}=\frac{\prod_{j=0}^{a} (q_E^{a+1}-q_E^j)}
{(q_E-1)\prod_{j=0}^{a-1} (q_E^a-q_E^j)}=q_E^a [a+1]_{q_E}.
$$
Since $|\Hom_{\cat A}(E^{\oplus a},E)|=q_E^a$, \eqref{eq:Rie} implies that
$$
{[E^{\oplus a}]}{[E]}=[a+1]_{q_E} {[E^{\oplus (a+1)}]},
$$
and the second identity follows by an obvious induction.
\end{proof}
\begin{proposition}\label{prop:decomp}
Let $\cat A$ be a finitary abelian category and
suppose that $\cat A=\cat A_1\bigvee\cdots\bigvee \cat A_r$ where the $\cat A_j$, $1\le j\le r$ are closed under extensions and
for all $1\le i<j\le r$, $\cat A_j\subset \cat A_i^{\rort}$. Then
\begin{enumerate}[{\rm(a)}]
 \item\label{prop:decomp.a} the
multiplication map $m:H_{\cat A_1}\tensor\cdots\tensor H_{\cat A_r}\to H_{\cat A}$ is an isomorphism of vector spaces.
In particular, if~$\cat A=\cat A_1\oplus\cdots \oplus \cat A_r$ then $H_{\cat A}\cong H_{\cat A_1}\tensor\cdots\tensor H_{\cat  A_r}$ as an algebra;

\item \label{prop:decomp.b} If $\Hom_{\cat A}(A_j,A_i)=0$ for all $1\le i<j\le r$, where $A_s\in\Ob\cat A_s$, then $(\cat A_1,\dots,
\cat A_r)$ is a factorizing sequence for~$\cat A$.
\end{enumerate}
\end{proposition}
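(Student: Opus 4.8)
The plan is to reduce both parts to a single computation: that under the orthogonality hypotheses, a product of Hall basis elements coming from the different factors collapses to a single basis element, after which the $\bigvee$-decomposition does the bookkeeping.

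\emph{Part~\eqref{prop:decomp.a}.} A basis of the domain is $\{[M_1]\tensor\cdots\tensor[M_r]:[M_s]\in\Iso\cat A_s\}$, and the hypothesis $\cat A=\cat A_1\bigvee\cdots\bigvee\cat A_r$ says precisely that $([M_1],\dots,[M_r])\mapsto[M_1\oplus\cdots\oplus M_r]$ is a bijection of $\prod_s\Iso\cat A_s$ onto the Hall basis $\Iso\cat A$. So it suffices to show each $m([M_1]\tensor\cdots\tensor[M_r])=[M_1]\cdots[M_r]$ is a nonzero scalar multiple of $[M_1\oplus\cdots\oplus M_r]$. Evaluating the product from the left, I would prove by induction on~$k$ that $[M_1]\cdots[M_k]$ is a positive multiple of $[M_1\oplus\cdots\oplus M_k]$: since $\cat A_j\subset\cat A_i^{\rort}$ gives $\Ext^1_{\cat A}(M_i,M_j)=0$ for $i<j$, additivity yields $\Ext^1_{\cat A}(M_1\oplus\cdots\oplus M_{k-1},M_k)=0$, so every extension of $M_1\oplus\cdots\oplus M_{k-1}$ by~$M_k$ splits and the only~$K$ with $F^K_{M_1\oplus\cdots\oplus M_{k-1},M_k}\ne 0$ is $K\cong M_1\oplus\cdots\oplus M_k$; the coefficient is positive because this~$K$ does contain~$M_k$ with quotient $M_1\oplus\cdots\oplus M_{k-1}$, so the defining set of~$F$ is nonempty. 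Thus~$m$ sends the tensor basis bijectively to nonzero multiples of the Hall basis, and is a vector-space isomorphism. When moreover $\cat A=\cat A_1\oplus\cdots\oplus\cat A_r$, there are no nonzero morphisms or extensions between distinct factors, so Corollary~\ref{cor:deg-Rie}\eqref{cor:deg-Rie.a} applies in both orders to give $[M_i][M_j]=[M_i\oplus M_j]=[M_j][M_i]$ for $i\ne j$; hence the images of the~$H_{\cat A_s}$ commute, $m$ is an algebra map, and being bijective it is an algebra isomorphism.

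\emph{Part~\eqref{prop:decomp.b}.} I would induct on~$r$. The crucial preliminary step is to identify $\cat A_{\le r-1}$ with $\cat A_1\bigvee\cdots\bigvee\cat A_{r-1}$, which amounts to showing the latter is closed under extensions (it is additive and contains each~$\cat A_i$, $i<r$, so minimality then forces equality). Given $0\to N\to X\to L\to 0$ with $N,L\in\cat A_1\bigvee\cdots\bigvee\cat A_{r-1}$, write $X\cong X'\oplus X_r$ with $X_r\in\cat A_r$; the composite $X_r\hookrightarrow X\twoheadrightarrow L$ maps into $\bigoplus_{i<r}L_i$ and vanishes because $\Hom_{\cat A}(X_r,L_i)=0$ by the hypothesis $\Hom_{\cat A}(A_j,A_i)=0$ for $i<j$, so $X_r\subseteq N$. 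Since $X_r$ is a direct summand of~$X$, the modular law gives $N\cong X_r\oplus(N\cap X')$, and uniqueness of the $\bigvee$-decomposition of~$N$ forces $X_r=0$. Then $\cat A_{\le r-1}=\cat A_1\bigvee\cdots\bigvee\cat A_{r-1}$ inherits all the hypotheses with $r-1$ factors, so by induction $(\cat A_1,\dots,\cat A_{r-1})$ is a factorizing sequence for it.

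It remains to check that $(\cat A_{\le r-1},\cat A_r)$ is a factorizing pair for~$\cat A$, after which the recursive definition makes $(\cat A_1,\dots,\cat A_r)$ a factorizing sequence. Existence is immediate: for $M\cong M_1\oplus\cdots\oplus M_r$ the summand $M_r\in\cat A_r$ satisfies $M/M_r\in\cat A_{\le r-1}$. For uniqueness, given $U\subseteq M$ with $U\in\cat A_r$ and $M/U\in\cat A_{\le r-1}$, projecting $U$ onto $\bigoplus_{i<r}M_i$ and using $\Hom_{\cat A}(A_r,A_i)=0$ shows $U\subseteq M_r$; then each composite $M_r\twoheadrightarrow M_r/U\hookrightarrow M/U\to(M/U)_i$, $i<r$, vanishes by the same $\Hom$-vanishing, so the monomorphism $M_r/U\hookrightarrow M/U$ is zero and $U=M_r$. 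The main obstacle is precisely this part~\eqref{prop:decomp.b}: both the extension-closure of $\cat A_1\bigvee\cdots\bigvee\cat A_{r-1}$ and the uniqueness of~$M_r$ hinge on invoking $\Hom_{\cat A}(A_j,A_i)=0$ in exactly the higher-to-lower index direction, so I must keep the orientations of the $\Ext$-hypothesis (through~$\cat A_i^{\rort}$) and of the $\Hom$-hypothesis mutually consistent with the order used in the factorizing pair.
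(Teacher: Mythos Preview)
Your argument is correct. Part~\eqref{prop:decomp.a} follows exactly the paper's line: both show that $[M_1]\cdots[M_r]$ is a nonzero scalar multiple of $[M_1\oplus\cdots\oplus M_r]$ and then invoke the $\bigvee$-decomposition; the paper computes the scalar explicitly via Riedtmann's formula while you only argue it is positive, but the idea is identical.

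For part~\eqref{prop:decomp.b} your proof and the paper's diverge. The paper reduces to $r=2$ and then, rather than verifying the definition of a factorizing pair, simply observes that Corollary~\ref{cor:deg-Rie}\eqref{cor:deg-Rie.a} gives $[B][C]=[B\oplus C]$ and sums to obtain $\Exp_{\cat B}\Exp_{\cat C}=\Exp_{\cat A}$, i.e.\ it proves directly the exponential identity of Proposition~\ref{pr:factorizable sequence}\eqref{pr:fs.a}, which is the only consequence of factorizing sequences used later. You instead verify the recursive definition itself: you show $\cat A_1\bigvee\cdots\bigvee\cat A_{r-1}$ is closed under extensions (hence coincides with $\cat A_{\le r-1}$) and then check existence and uniqueness of the required subobject for the pair $(\cat A_{\le r-1},\cat A_r)$. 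Your route is longer but proves the statement as literally formulated; the paper's is shorter but, strictly speaking, establishes only the operational consequence.

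One small streamlining in your extension-closure step: once you have that $X_r\hookrightarrow X\twoheadrightarrow L$ vanishes, so that the monomorphism $X_r\hookrightarrow X$ factors through $N$, you can skip the modular law entirely. Since $N\cong\bigoplus_{i<r}N_i$ with $N_i\in\cat A_i$, the same $\Hom$-vanishing hypothesis gives $\Hom_{\cat A}(X_r,N)=\bigoplus_{i<r}\Hom_{\cat A}(X_r,N_i)=0$; thus the monomorphism $X_r\to N$ is zero and $X_r=0$. The same remark applies to your uniqueness argument for $M_r$.
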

\begin{proof}
Clearly, it is sufficient to prove this statement for~$r=2$. Let~$\cat B=\cat A_1$, $\cat C=\cat A_2$.
By~\eqref{eq:Rie}, $m([B]\tensor [C])={[B]}{[C]}=c_{[B],[C]}{[B\oplus C]}$
where
$$
c_{[B],[C]}=\frac{|\Aut_{\cat A}(B\oplus C)|}{|\Hom_{\cat A}(B,C)||\Aut_{\cat A}(B)\times\Aut_{\cat A}(C)|}\not=0.
$$
On the other hand, since for any~$[M]\in\Iso\cat A$, there exist a unique $([B_M],[C_M])\in\Iso\cat B\times\Iso\cat C$
with~$[M]=[B_M\oplus C_M]$, the assignment
$[M]\mapsto c_{[B_M],[C_M]}^{-1}[B_M]\tensor [C_M]$
provides a well-defined linear map $\psi:H_{\cat A}\to H_{\cat B}\tensor H_{\cat C}$. Clearly, $\psi\circ m=1_{H_{\cat B}\tensor H_{\cat A}}$
and $m\circ\psi=1_{H_{\cat A}}$. 

If~$\cat A=\cat B\oplus\cat C$ then $\Hom_{\cat A}(B,C)=\Hom_{\cat A}(C,B)=0=\Ext^1_{\cat A}(B,C)=\Ext^1_{\cat A}(C,B)$ for all
$B\in\Ob\cat B$, $C\in\Ob\cat C$. In particular, ${[B]}{[C]}={[C]}{[B]}={[B\oplus C]}$ and so $H_{\cat B}H_{\cat C}=
H_{\cat C}H_{\cat B}$. Thus, in this case $H_{\cat A}\cong H_{\cat B}\tensor H_{\cat C}$ as an algebra.

To prove~\eqref{prop:decomp.b}, note that by Corollary~\ref{cor:deg-Rie}\eqref{cor:deg-Rie.a}, ${[B]}{[C]}={[B\oplus C]}$ for all
$B\in\Ob\cat B$, $C\in\Ob\cat C$. Thus,
\begin{equation*}
\Exp_{\cat B}\Exp_{\cat C}=\sum_{[B]\in\Iso\cat B} [B] \sum_{[C]\in\Iso\cat C} [C]=
\sum_{([B],[C])\in\Iso\cat B\times\Iso\cat C} [B][C]=\sum_{[A]\in\Iso\cat A} [A]=\Exp_{\cat A}.\qedhere
\end{equation*}
\end{proof}

\subsection{}
Given an object $X$ in~$\cat A$ without self-extensions, let $\Exp_{\cat A}([X])=\Exp_{\add_{\cat A}X}$.
For example, if~$\cat A$ is the category of finite dimensional vector spaces over a finite field~$\kk$,
$\Exp_{\cat A}=\Exp_{\cat A}([\kk])$.

\begin{proposition}\label{prop:simpl}
Let $S$ be a simple  object in a finitary abelian category~$\cat A$ with the finite length property. Let~$\cat A'=\add_{\cat A}S$ and let $\cat A''$ be the 
maximal full subcategory of~$\cat A$ whose objects~$M$ satisfy
$[M:S]=0$. 

\begin{enumerate}[{\rm(a)}]
 \item\label{prop:simpl.a} 
 If $S$ is injective, then for any~$M\in\Ob\cat A$, there is a unique subobject~$M''\in\Ob\cat A$ of~$M$ such that
$M/M''\cong S^{\oplus [M:S]}$. In particular, $(\cat A',\cat A'')$ is a factorizing pair. 

\item\label{prop:simpl.b} If $S$ is projective, then for any~$M\in\Ob\cat A$, there is a unique subobject~$M'\cong S^{\oplus [M:S]}$
such that $M/M'\in\Ob\cat A''$. In particular,
$(\cat A'',\cat A')$  is a factorizing pair for~$\cat A$.
\end{enumerate}
\end{proposition}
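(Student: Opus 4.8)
The plan is to prove part~\eqref{prop:simpl.a} directly and to deduce part~\eqref{prop:simpl.b} by passing to the opposite category. Indeed, $\cat A^{\mathrm{op}}$ is again finitary abelian with the finite length property; $S$ is simple projective in $\cat A^{\mathrm{op}}$ precisely when it is simple injective in $\cat A$; the subcategories $\cat A'=\add_{\cat A}S$ and $\cat A''$ are unchanged (direct summands and Jordan--H\"older multiplicities are self-dual, and closure under extensions is a self-dual condition); and a subobject in $\cat A$ is a quotient object in $\cat A^{\mathrm{op}}$ and vice versa. Thus the factorization $0\to M''\to M\to S^{\oplus[M:S]}\to 0$ produced by~\eqref{prop:simpl.a} applied to $\cat A^{\mathrm{op}}$ becomes exactly the factorization $0\to S^{\oplus[M:S]}\to M\to M''\to 0$ required by~\eqref{prop:simpl.b} in $\cat A$, uniqueness translating to uniqueness. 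Before the main argument I would record the two routine closure statements: $\cat A''$ is closed under extensions by additivity of $[-:S]$ on short exact sequences, and $\cat A'=\add_{\cat A}S$ is closed under extensions because $S$, hence each $S^{\oplus a}$, is injective, so any extension of objects of $\cat A'$ splits.

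For the existence in~\eqref{prop:simpl.a} I would take $M''$ to be the \emph{reject} of $S$ in $M$, namely $M''=\bigcap_{f\in\Hom_{\cat A}(M,S)}\ker f$. Choosing, by the finite length property, finitely many $f_1,\dots,f_k$ whose kernels already intersect in $M''$, the map $(f_1,\dots,f_k)\colon M\to S^{\oplus k}$ has kernel $M''$, so $M/M''$ embeds into the semisimple object $S^{\oplus k}$. Since a subobject of a semisimple object is semisimple with composition factors among those of $S^{\oplus k}$, we get $M/M''\cong S^{\oplus m}$ for some $m\ge 0$, which in particular lies in $\cat A'$.

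The heart of the argument is to show $M''\in\cat A''$, i.e.\ $[M'':S]=0$, and this is where injectivity of $S$ enters twice. First, any $g\colon M''\to S$ extends along the inclusion $M''\hookrightarrow M$ to some $\tilde g\colon M\to S$ because $S$ is injective; as $M''\subseteq\ker\tilde g$ by construction, $g=\tilde g|_{M''}=0$, so $\Hom_{\cat A}(M'',S)=0$. Second, I would establish the converse-type fact that for injective $S$ every object $X$ with $[X:S]\ge 1$ admits a nonzero map to $S$: a composition factor $S$ realizes $S$ as a subobject of some quotient $X/Y$, which then splits off (again $S$ injective) and yields a surjection $X\to X/Y\to S$. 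Applying this with $X=M''$ contradicts $\Hom_{\cat A}(M'',S)=0$ unless $[M'':S]=0$; hence $M''\in\cat A''$ and, by additivity, $m=[M/M'':S]=[M:S]$, giving $M/M''\cong S^{\oplus[M:S]}$.

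Finally, for uniqueness suppose $N\subseteq M$ satisfies $N\in\cat A''$ and $M/N\cong S^{\oplus p}\in\cat A'$. On one hand the $p$ coordinate maps $M\to M/N\to S$ have common kernel $N$, so $M''\subseteq N$; on the other hand $[N:S]=0$ forces $\Hom_{\cat A}(N,S)=0$, whence every $f\colon M\to S$ kills $N$ and so $N\subseteq\bigcap_f\ker f=M''$. Thus $N=M''$. I expect the main obstacle to be the step identifying the reject with an object of $\cat A''$ (the second use of injectivity above, via ``composition factor $\Rightarrow$ subobject of a quotient $\Rightarrow$ splits''); everything else is bookkeeping with the additivity of $[-:S]$ and the universal property defining the reject.
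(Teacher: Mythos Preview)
Your proof is correct and takes a genuinely different route from the paper's. The paper defines $M''$ as the \emph{maximal subobject of~$M$ lying in~$\cat A''$} (which exists because the sum of two subobjects in~$\cat A''$ is again in~$\cat A''$), and then argues that $M/M''$ lies in~$\cat A'$: choosing $a$ maximal with $S^{\oplus a}\hookrightarrow M/M''$, injectivity of~$S$ splits this off as $M/M''\cong R\oplus S^{\oplus a}$; if $R\ne 0$ it has a simple subobject $S'\not\cong S$, and the preimage of~$S'$ in~$M$ is a strictly larger subobject in~$\cat A''$, a contradiction. Uniqueness is then implicit in the maximality (any $N$ with $M/N\cong S^{\oplus[M:S]}$ has $[N:S]=0$, hence $N\subseteq M''$, and equality follows by finite length).

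You instead define $M''$ as the \emph{reject} of~$S$ in~$M$, so that $M/M''\in\cat A'$ is automatic, and use injectivity to show $M''\in\cat A''$. In effect the two proofs start from opposite ends of the same short exact sequence: the paper secures $M''\in\cat A''$ by fiat and works to place the quotient in~$\cat A'$, while you secure the quotient in~$\cat A'$ by fiat and work to place~$M''$ in~$\cat A''$. Your approach has the advantage of making uniqueness explicit and of cleanly reducing~\eqref{prop:simpl.b} to~\eqref{prop:simpl.a} via the opposite category (the paper simply says the argument is similar); the paper's approach is slightly more self-contained in that it avoids the auxiliary ``$[X:S]\ge 1\Rightarrow\Hom_{\cat A}(X,S)\ne 0$'' step.
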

\begin{proof}
We prove part~\eqref{prop:simpl.a}, the proof for the other part being similar. Let~$M\in\Ob\cat A$. Then
$M$ has a unique maximal subobject~$M''$ which is an object in~$\cat A''$. Indeed, if $X,Y\in\Ob\cat A''$
are subobjects of~$M$, then their sum $X+Y$
which is the image of $X\oplus Y$ in~$M$ under the map canonically induced by the natural inclusions $X\hookrightarrow M$ and
$Y\hookrightarrow M$, is also an object in~$\cat A''$.

We claim that $M/M''\cong S^{\oplus [M:S]}$. 
Indeed, let $a\ge 0$ be maximal such that $M/M''$ has a subobject isomorphic to~$S^{\oplus a}$. Since
$S$ is injective, $M/M''\cong R\oplus S^{\oplus a}$ for some subobject~$R$ of~$M/M''$. Suppose that~$R\not=0$. Then $R$ has a simple 
subobject~$S'$, which by the choice of~$a$ is not isomorphic to~$S$. Let $K$ be the 
subobject of~$M$ such that~$M''$ is a subobject of~$K$ and~$K/M''=S'$. Then $K$ is an object in~$\cat A''$ and
since $K\not=M''$, we obtain a contradiction by the maximality of~$M''$. Thus, $R=0$ and so~$M/M''\cong S^{\oplus a}$. 
Since $[M'':S]=0$,
the short exact sequence $0\to M''\to M\to S^{\oplus a}\to 0$ yields~$a=[M:S]$. 
\end{proof}

\subsection{}
From now on, we only consider abelian $\kk$-categories for a field~$\kk$. In this case, 
if $S$ is a simple object in~$\cat A$, $\End_{\cat A} S$ is a division algebra over~$\kk$. 
Note also that $\Ext^1_{\cat A}(S,S')$ is a right $\End_{\cat A}S$-module and a left $\End_{\cat A}S'$-module.
Consider the $\Ext$-quiver~$\mathcal E_{\cat A}$ of~$\cat A$, which is the valued
quiver with vertices corresponding to isomorphism classes of simple objects and arrows $[S]\xrightarrow{(a,b)} [S']$
if $\Ext^1_{\cat A}(S,S')\not=0$
where $a=\dim_{\End_{\cat A} S'}\Ext_{\cat A}^1(S,S')$ and $b=\dim\Ext_{\cat A}^1(S,S')_{\End_{\cat A}S}$.
We say that~$\cat A$ is acyclic if $\mathcal E_{\cat A}$ is acyclic.

If~$\cat A$ has the finite length property, denote by $[M:S]$ the Jordan-H\"older multiplicity of a 
simple object~$S$ in an object~$M$. 
We say that~$\cat A$ admits a source order if the set of vertices of~$\mathcal E_{\cat A}$ is countable and 
there exists a numbering of vertices such that $[S_i]$ is a source in the quiver obtained by removing all vertices
$[S_j]$, $j<i$ and their adjacent arrows. Equivalently, there exists a numbering of the isomorphism classes of 
simples such that $S_i$ is injective in the Serre subcategory whose objects satisfy $[M:S_j]=0$ if~$j<i$.

\begin{lemma}
Let~$\cat A$ be an acyclic abelian category with the finite length property. Suppose that $\mathcal E_{\cat A}$ has finitely many vertices.
Then $\cat A$ admits a source order.
\end{lemma}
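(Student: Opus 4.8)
The plan is to reduce the statement to the elementary fact that a finite acyclic quiver admits a topological ordering, and to carry out that reduction by induction on the number of vertices of $\mathcal E_{\cat A}$. Throughout I identify the two formulations of ``source order'' given in the definition: a numbering $[S_1],\dots,[S_n]$ is a source order precisely when, for each $i$, the vertex $[S_i]$ has no incoming arrow in the induced subquiver on $\{[S_i],\dots,[S_n]\}$, equivalently when $\Ext^1_{\cat A}(S_j,S_i)=0$ for all $j\ge i$, equivalently (by finite length) when $S_i$ is injective in the Serre subcategory cut out by $[M:S_j]=0$ for $j<i$.

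First I would establish that $\mathcal E_{\cat A}$ has a source, i.e.\ a vertex $[S]$ with no incoming arrow, which by definition of the $\Ext$-quiver means $\Ext^1_{\cat A}(S',S)=0$ for every simple object $S'$ (including $S'=S$, a self-loop being excluded by acyclicity). If no source existed, every vertex would receive at least one arrow; following incoming arrows backwards from an arbitrary vertex produces an infinite directed walk, and since $\mathcal E_{\cat A}$ has only finitely many vertices some vertex repeats, yielding an oriented cycle and contradicting acyclicity. Choose such a source and call it $[S_1]$. By the long exact sequence for $\Ext^1_{\cat A}(-,S_1)$ applied to a composition series, together with the finite length property, $\Ext^1_{\cat A}(S',S_1)=0$ for all simple $S'$ upgrades to $\Ext^1_{\cat A}(M,S_1)=0$ for every object $M$; hence $S_1$ is injective in $\cat A$, which is exactly the $i=1$ instance of the source-order condition.

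Next I would pass to the induced subquiver obtained by deleting $[S_1]$ and its adjacent arrows. This is the $\Ext$-quiver of the Serre subcategory $\cat A_1$ of objects $M$ with $[M:S_1]=0$; it is again acyclic, since any oriented cycle in it would already be an oriented cycle in $\mathcal E_{\cat A}$, and it has one fewer vertex. By the induction hypothesis $\cat A_1$ admits a source order, giving a numbering $[S_2],\dots,[S_n]$ of its simples such that $S_i$ is injective in the Serre subcategory of $\cat A_1$ determined by $[M:S_j]=0$ for $2\le j<i$. Finally I would check that $[S_1],\dots,[S_n]$ is a source order for $\cat A$: for each $i\ge 2$ the Serre subcategory of $\cat A$ defined by $[M:S_j]=0$ for $j<i$ coincides with the corresponding Serre subcategory of $\cat A_1$, so the injectivity of $S_i$ transfers unchanged, while the $i=1$ case was settled above.

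The argument has no genuine obstacle beyond the source-existence step; the only points needing care are purely formal bookkeeping, namely that deleting an initial segment of vertices of $\mathcal E_{\cat A}$ matches passage to the successive Serre subcategories (immediate from the definition of the $\Ext$-quiver) and that passing to an induced subquiver preserves acyclicity (immediate). I would therefore expect the writeup to be short, with the ``backward-walk yields a cycle'' lemma being the single nontrivial ingredient.
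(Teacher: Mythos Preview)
Your proposal is correct and follows essentially the same approach as the paper: locate a source in the finite acyclic quiver $\mathcal E_{\cat A}$, label it $[S_1]$, pass to the Serre subcategory cut out by $[M:S_1]=0$ (whose $\Ext$-quiver is the induced subquiver on the remaining vertices), and iterate. The paper compresses this into three sentences without spelling out the existence-of-a-source argument or the injectivity verification you include, but the skeleton is identical.
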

\begin{proof}
Since $\mathcal E_{\cat A}$ is finite and acyclic, it contains at least one source. Let~$[S_1]$ be the corresponding
class and consider the Serre subcategory whose objects satisfy $[M:S_1]=0$. Its $\Ext$ quiver is obtained from that 
of~$\cat A$ by removing the vertex~$[S_1]$ and its adjacent arrows. Continuing this way, we obtain the desired
total order.  
\end{proof}

The following is an immediate consequence of Proposition~\ref{prop:simpl}.
\begin{corollary}\label{cor:sorder}
Let~$\cat A$ be a finitary category and suppose that~$\cat A$ admits a source order~$[S_1]\prec [S_2]\prec\cdots$.
Then 
$$
\Exp_{\cat A}=\overrightarrow{\prod_{i}}\Exp_{\cat A}([S_i])
$$
\end{corollary}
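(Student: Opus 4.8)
The plan is to peel off the simples one at a time from the left using Proposition~\ref{prop:simpl}\eqref{prop:simpl.a}, and then to control the resulting infinite product inside the completion $\widehat H_{\cat A}$. Throughout I use that admitting a source order forces $\cat A$ to have the finite length property, so that Proposition~\ref{prop:simpl} applies.

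First I would set up the nested Serre subcategories adapted to the order. For $i\ge 0$ let $\cat A^{(i)}$ be the full subcategory of objects $M$ with $[M:S_j]=0$ for all $j\le i$, so $\cat A^{(0)}=\cat A$. Each $\cat A^{(i)}$ is a Serre subcategory and hence abelian, and it inherits both the finitary and the finite length properties from $\cat A$ (its $\Ext^1$-groups agree with those of $\cat A$ since it is closed under extensions); its simple objects are exactly the $S_j$ with $j>i$. By the definition of a source order, $S_{i+1}$ is injective in the Serre subcategory of objects $M$ with $[M:S_j]=0$ for $j<i+1$, which is precisely $\cat A^{(i)}$. Since $S_{i+1}$ is simple we have $\add_{\cat A^{(i)}}S_{i+1}=\add_{\cat A}S_{i+1}$, and the maximal subcategory of $\cat A^{(i)}$ on which $S_{i+1}$ has zero multiplicity is $\cat A^{(i+1)}$.

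Next I would apply Proposition~\ref{prop:simpl}\eqref{prop:simpl.a} inside $\cat A^{(i)}$ to the injective simple $S_{i+1}$: it shows that $(\add_{\cat A}S_{i+1},\cat A^{(i+1)})$ is a factorizing pair for $\cat A^{(i)}$. Proposition~\ref{pr:factorizable sequence}\eqref{pr:fs.a} with $m=2$ then gives $\Exp_{\cat A^{(i)}}=\Exp_{\cat A}([S_{i+1}])\,\Exp_{\cat A^{(i+1)}}$. Iterating from $i=0$ produces, for every $N$,
$$
\Exp_{\cat A}=\Exp_{\cat A}([S_1])\cdots \Exp_{\cat A}([S_N])\cdot \Exp_{\cat A^{(N)}}.
$$
If $\mathcal E_{\cat A}$ has finitely many vertices $r$, then $\cat A^{(r)}=0$, so $\Exp_{\cat A^{(r)}}=1$ and the claim follows immediately.

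For the (countably) infinite case the only remaining issue is convergence, which I would settle graded piece by graded piece. Fix $\gamma\in K_0^+(\cat A)$; since the classes $\alpha_i=|S_i|$ form a basis of $K_0(\cat A)$, only finitely many of them occur in $\gamma$, say those with $i\le N$. Every nonconstant homogeneous term of $\Exp_{\cat A^{(N)}}$ has grading involving some $\alpha_i$ with $i>N$ (the simples of $\cat A^{(N)}$ being the $S_j$, $j>N$), and because the grading is additive this contribution cannot be cancelled upon multiplying by the partial product. Hence the $\gamma$-component of the displayed right-hand side equals that of $\Exp_{\cat A}([S_1])\cdots\Exp_{\cat A}([S_N])$, and therefore that of $\Exp_{\cat A}$. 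As every $\gamma$ is reached once $N$ is large enough, the ordered product $\overrightarrow{\prod_i}\Exp_{\cat A}([S_i])$ converges to $\Exp_{\cat A}$ in $\widehat H_{\cat A}$. The genuinely delicate point is this last bookkeeping; the rest is a direct induction feeding the conclusion of Proposition~\ref{prop:simpl}\eqref{prop:simpl.a} back into its own hypothesis.
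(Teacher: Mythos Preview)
Your proof is correct and follows precisely the approach the paper intends: the paper states the corollary as ``an immediate consequence of Proposition~\ref{prop:simpl}'' with no further argument, and your induction peeling off one injective simple at a time via Proposition~\ref{prop:simpl}\eqref{prop:simpl.a} is exactly that consequence spelled out. Your additional care with convergence in the countably infinite case (checking that each graded component stabilizes) is a detail the paper leaves implicit but which you handle correctly.
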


Now we can prove Proposition~\ref{prop:pent.pair} from the introduction.
\begin{proof}[Proof of Proposition~\ref{prop:pent.pair}]
It is immediate from Proposition~\ref{prop:simpl} that $(\cat A_+,\cat A_-)$ is a factorizing pair for~$\cat A$. Clearly,
$\cat A=\cat A_-\bigvee\cat A_0\bigvee\cat A_+$, $\cat A_-\subset \cat A_0^\lort,\cat A_+^\lort$ and
$\cat A_0\subset \cat A_+^\lort$. Suppose that $M$ is an object in~$\cat A_0$. Then $\Hom_{\cat A}(M,S_-)=0$
for any~$[S_-]\in\mathcal S_-$, for otherwise $S_-$ is a direct summand of~$M$, and similarly, $\Hom_{\cat A}(S_+,M)=0$
for all $[S_+]\in\mathcal S_+$. 
Therefore, $(\cat A_-,\cat A_0,\cat A_+)$
is a factorizing triple by Proposition~\ref{prop:decomp}.
\end{proof}

\subsection{}
Let~$\cat B$ be a full subcategory of~$\cat A$ closed under extensions.
Given $\gamma\in K_0(\cat B)$ and an element $X$ of~$H_{\cat B}$ or~$\widehat H_{\cat B}$, 
denote by~$X|_\gamma$ its canonical projection onto~$H_{\cat B,\gamma}$.
Clearly, $X=\sum_{\gamma\in K_0(\cat B)} X|_\gamma$.
Define $E_{\cat B}$ as the subalgebra
of~$H_{\cat B}$ generated by the $\Exp_{\cat B}|_\gamma$, $\gamma\in K_0(\cat B)$.
\begin{lemma}\label{lem:comp-exp}
Suppose that $(\cat A_+,\cat A_-)$ is a factorizing pair for a finitary category~$\cat A$ with the finite length property.
Assume that
\begin{equation}\label{eq:disj-prop}
K_0^+(\cat A_+) \cap K_0^+(\cat A_-)=0.
\end{equation}
Then $E_{\cat A_\pm}\subset E_{\cat A}$.
\end{lemma}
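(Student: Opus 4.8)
The plan is to leverage the factorization $\Exp_{\cat A}=\Exp_{\cat A_+}\Exp_{\cat A_-}$, which holds by Proposition~\ref{pr:factorizable sequence}\eqref{pr:fs.a} since a factorizing pair is a factorizing sequence of length two, and to recover each homogeneous component of $\Exp_{\cat A_+}$ and $\Exp_{\cat A_-}$ from the components of $\Exp_{\cat A}$ by a descending induction on the grading. Recall that, since $\cat A$ has the finite length property, $K_0(\cat A)$ is free abelian on the classes of the simple objects and $K_0^+(\cat A)$ is the corresponding nonnegative cone; in particular the partial order $\le$ on $K_0^+(\cat A)$ is well-founded, because every element dominates only finitely many others, and both $K_0^+(\cat A_+)$ and $K_0^+(\cat A_-)$ are submonoids of $K_0^+(\cat A)$. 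I would prove, by well-founded induction on $\gamma\in K_0^+(\cat A)$, the combined assertion
$$
R(\gamma):\qquad \Exp_{\cat A_+}|_\gamma\in E_{\cat A}\quad\text{and}\quad \Exp_{\cat A_-}|_\gamma\in E_{\cat A},
$$
with the convention that $\Exp_{\cat A_\pm}|_\gamma=0$ whenever $\gamma\notin K_0^+(\cat A_\pm)$. Granting $R(\gamma)$ for all $\gamma$ gives exactly $E_{\cat A_\pm}\subset E_{\cat A}$, since $E_{\cat A_\pm}$ is generated by the $\Exp_{\cat A_\pm}|_\gamma$.

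The key computation is the extraction of a homogeneous component from the factorization. Since the Hall product is $K_0(\cat A)$-graded, projecting $\Exp_{\cat A}=\Exp_{\cat A_+}\Exp_{\cat A_-}$ onto degree $\gamma$ yields
$$
\Exp_{\cat A}|_\gamma=\sum_{\substack{\alpha\in K_0^+(\cat A_+),\ \beta\in K_0^+(\cat A_-)\\ \alpha+\beta=\gamma}}(\Exp_{\cat A_+}|_\alpha)(\Exp_{\cat A_-}|_\beta),
$$
a finite sum because $\alpha,\beta\le\gamma$. If $\gamma\in K_0^+(\cat A_+)$, the summand $(\alpha,\beta)=(\gamma,0)$ equals $\Exp_{\cat A_+}|_\gamma$ (as $\Exp_{\cat A_-}|_0=[0]=1$), so
$$
\Exp_{\cat A_+}|_\gamma=\Exp_{\cat A}|_\gamma-\sum_{\substack{\alpha+\beta=\gamma\\ \beta\neq0}}(\Exp_{\cat A_+}|_\alpha)(\Exp_{\cat A_-}|_\beta),
$$
and symmetrically, if $\gamma\in K_0^+(\cat A_-)$, isolating $(\alpha,\beta)=(0,\gamma)$ gives the analogous expression for $\Exp_{\cat A_-}|_\gamma$. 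In both formulas the leading term $\Exp_{\cat A}|_\gamma$ is a generator of $E_{\cat A}$ by definition, and $E_{\cat A}$ is a subalgebra, so it suffices to place the remaining products in $E_{\cat A}$.

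This is exactly where hypothesis~\eqref{eq:disj-prop} enters, and it is the only delicate point. In a cross term for $\Exp_{\cat A_+}|_\gamma$ one has $\beta\neq0$ with $\beta\in K_0^+(\cat A_-)$, hence $\beta>0$ and $\alpha=\gamma-\beta<\gamma$; moreover $\beta\le\gamma$, and if $\beta=\gamma$ then $\alpha=0$ forces $\gamma\in K_0^+(\cat A_+)\cap K_0^+(\cat A_-)$, which by~\eqref{eq:disj-prop} means $\gamma=0$, contradicting $\gamma\neq0$. Thus for $\gamma\neq0$ every cross term has $\alpha<\gamma$ and $\beta<\gamma$ strictly, so the inductive hypotheses $R(\alpha)$ and $R(\beta)$ put both factors $\Exp_{\cat A_+}|_\alpha$ and $\Exp_{\cat A_-}|_\beta$ in $E_{\cat A}$; since $E_{\cat A}$ is closed under products and $\QQ$-linear combinations, $\Exp_{\cat A_+}|_\gamma\in E_{\cat A}$, and symmetrically for $\Exp_{\cat A_-}|_\gamma$. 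The base case $\gamma=0$ is immediate, as $\Exp_{\cat A_\pm}|_0=1\in E_{\cat A}$, which closes the induction. The main obstacle, and the sole joint use of the finite length property and~\eqref{eq:disj-prop}, is guaranteeing this strict drop in degree so that the coupled recursion for the two families of components terminates; without disjointness a component $\Exp_{\cat A_+}|_\gamma$ could reappear on the right-hand side at the same degree and the induction would stall.
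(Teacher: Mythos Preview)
Your proof is correct and follows essentially the same approach as the paper's: both extract the degree-$\gamma$ component of the factorization $\Exp_{\cat A}=\Exp_{\cat A_+}\Exp_{\cat A_-}$, isolate the term $\Exp_{\cat A_\pm}|_{\gamma}$, and run a simultaneous induction on $K_0^+(\cat A)$, using~\eqref{eq:disj-prop} to guarantee that all remaining cross terms have both $\mu_+$ and $\mu_-$ strictly below~$\gamma$. Your write-up is somewhat more explicit about the well-foundedness of the order and the base case, but the argument is the same.
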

\begin{proof}
Since $(\cat A_+,\cat A_-)$ is a factorizing pair, $\Exp_{\cat A}=\Exp_{\cat A_+}\Exp_{\cat A_-}$ which is 
equivalent to 
$$
\Exp_{\cat A}|_{\gamma}=\sum_{(\gamma_+,\gamma_-)\in K_0^+(\cat A_+)\times K_0^+(\cat A_-)\,:\,\gamma_++\gamma_-=\gamma}(\Exp_{\cat A_+}|_{\gamma_+})(\Exp_{\cat A_-}|_{\gamma_-}),
\qquad \gamma\in K_0^+(\cat A).
$$
Then by~\eqref{eq:disj-prop} we have for all~$\gamma_\pm\in K_0(\cat A_\pm)^+$
$$
\Exp_{\cat A_\pm}|_{\gamma_\pm}=\Exp_{\cat A}|_{\gamma_\pm}-\sum_{(\mu_+,\mu_-)\in(K_0^+(\cat A_+)\setminus\{0\})\times(K_0^+(\cat A_-)\setminus\{0\})\,:\,\mu_++\mu_-=\gamma_\pm}(\Exp_{\cat A_+}|_{\mu_+})(\Exp_{\cat A_-}|_{\mu_-}).
$$
We can now finish the proof by a simultaneous induction on partially ordered sets $K_0^+(\cat A_+)$ and $K_0^+(\cat A_-)$
(note that the induction begins since $|S|\in K_0^+(\cat A_+)\bigsqcup K_0^+(\cat A_-)$ for every simple object~$S$ in~$\cat A$).
\end{proof}

\begin{lemma}\label{lem:comp-exp-1}
Let~$\cat A$ be a finitary abelian category with the finite length property. Let~$\cat B$ be 
a full subcategory of~$\cat A$ closed under extensions and direct summands and satisfying
\begin{enumerate}[$1^\circ$]
 \item If $M,N$ are indecomposable objects in~$\cat B$ with $|M|=|N|$ then $M\cong N$;
\item $H_{\cat B}$ is generated by the $[M]\in \Ind\cat A\cap \Iso\cat B$.
\end{enumerate}
Then $H_{\cat B}=E_{\cat B}$. 
\end{lemma}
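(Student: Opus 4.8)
The plan is to prove the nontrivial inclusion $H_{\cat B}\subseteq E_{\cat B}$ (the reverse being the definition of $E_{\cat B}$ as a subalgebra of $H_{\cat B}$). By hypothesis $2^\circ$ it suffices to show that $[M]\in E_{\cat B}$ for every indecomposable $M$ with $[M]\in\Iso\cat B$, and I would do this by induction on $\gamma=|M|$ along the partial order $\le$ on $K_0^+(\cat B)$, which is well founded since $\{\mu:\mu\le\gamma\}$ is finite for each $\gamma$. By $1^\circ$ there is at most one indecomposable of a given class, which I denote $M_\gamma$; thus $\Exp_{\cat B}|_\gamma=[M_\gamma]+\sum_{A}[A]$, the sum running over the decomposable $A$ with $|A|=\gamma$, and every indecomposable summand of such an $A$ has class strictly below $\gamma$.

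The inductive step rests on the following standard triangularity, a consequence of Riedtmann's formula (Proposition~\ref{prop:Rie}): for indecomposables $N_1,\dots,N_k$ the product $[N_1]\cdots[N_k]$ equals $a\,[N_1\oplus\cdots\oplus N_k]$ with $a\ne 0$, plus a combination of classes $[L]$ having strictly fewer than $k$ indecomposable summands (the split extension always contributes, while any non-split gluing strictly lowers the number of summands). Granting the outer hypothesis that $[N]\in E_{\cat B}$ for every indecomposable of class below $\gamma$, an inner induction on the number of summands within a fixed class $\mu<\gamma$ — with $M_\mu$ available by the outer hypothesis — shows $H_{\cat B,\mu}\subseteq E_{\cat B}$ for all $\mu<\gamma$. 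Hence the product of the indecomposable summands of any decomposable $A$ of class $\gamma$ lies in $E_{\cat B}$. Filtering $H_{\cat B,\gamma}$ by number of summands and applying the triangularity, these products span all decomposables of class $\gamma$ modulo $\mathbb Q[M_\gamma]$; equivalently $E_{\cat B}\cap H_{\cat B,\gamma}$ has codimension at most one in $H_{\cat B,\gamma}$, the only possibly missing direction being $[M_\gamma]$. Condition $1^\circ$ is essential precisely here: it forces the one-summand part of $H_{\cat B,\gamma}$ to be exactly the line $\mathbb Q[M_\gamma]$, so the defect matches the single extra generator $\Exp_{\cat B}|_\gamma$.

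It then remains to feed in $\Exp_{\cat B}|_\gamma\in E_{\cat B}$ and solve for $[M_\gamma]$. Writing each decomposable class as (an element of $E_{\cat B}$) $+\,c_A[M_\gamma]$ through the triangular relations and substituting into $\Exp_{\cat B}|_\gamma=[M_\gamma]+\sum_A[A]$ gives $(1+\sum_A c_A)[M_\gamma]\in E_{\cat B}$, so everything reduces to the nonvanishing of the scalar $1+\sum_A c_A$; this is the main obstacle. Equivalently, one must show that the square system whose rows are the products $[N_1]\cdots[N_k]$ expressed in the basis of classes of degree $\gamma$, together with the all-ones row coming from $\Exp_{\cat B}|_\gamma$, is nondegenerate. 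I expect to extract this from the structure of the split Hall numbers provided by Corollary~\ref{cor:deg-Rie} and Proposition~\ref{prop:Rie}, which dominate the off-diagonal gluing terms and render the determinant a nonzero (essentially $q$-power) quantity, as one already verifies in the rank-one and homogeneous-tube cases. Conceptually, this nondegeneracy reflects the group-like identity $\Delta(\Exp_{\cat B})=\Exp_{\cat B}\otimes\Exp_{\cat B}$: the logarithm of a group-like element is primitive, and primitivity forces the degree-$\gamma$ component to be a nonzero multiple of $[M_\gamma]$ modulo products of lower degree, which is exactly what is needed. Once $[M_\gamma]\in E_{\cat B}$ is secured the inductive step closes, and together with $2^\circ$ this yields $H_{\cat B}=E_{\cat B}$.
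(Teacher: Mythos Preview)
Your proposal has two genuine problems.

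First, the triangularity claim on which your inner induction rests is false. You assert that for indecomposables $N_1,\dots,N_k$ the product $[N_1]\cdots[N_k]$ equals a nonzero multiple of $[N_1\oplus\cdots\oplus N_k]$ plus terms with strictly fewer indecomposable summands, since ``any non-split gluing strictly lowers the number of summands.'' Almost split sequences give standard counterexamples: for the $D_4$ quiver with all arrows into the central vertex~$0$, the almost split sequence starting at $S_0$ has middle term $P_1\oplus P_2\oplus P_3$, so a non-split extension of two indecomposables produces three summands. Hence the number-of-summands filtration is not multiplicative in the required direction, and your inner induction does not go through.

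Second, even granting your framework, you end at an obstacle you do not resolve: the nonvanishing of $1+\sum_A c_A$. The appeal to group-likeness of $\Exp_{\cat B}$ and to $q$-power determinants is suggestive but not a proof.

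The paper's argument sidesteps both issues by a different, simpler observation: from condition~$2^\circ$ it deduces that every \emph{decomposable} class $[M]$ already lies in $(H_{\cat B,<|M|})^2$. Granting this, the inductive step is one line: by the inductive hypothesis $(H_{\cat B,<\gamma})^2\subset E_{\cat B}$, so each decomposable of class $\gamma$ is in $E_{\cat B}$, whence
\[
[M_\gamma]\;=\;\Exp_{\cat B}\big|_\gamma\;-\!\!\sum_{\substack{[N]\text{ decomposable}\\ |N|=\gamma}}[N]\;\in\;E_{\cat B},
\]
with coefficient exactly $1$ in front of $[M_\gamma]$---there is no $1+\sum_A c_A$ to control. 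Your route, by contrast, expresses each decomposable $[A]$ as $c_A[M_\gamma]+(\text{element of }E_{\cat B})$ with a priori nonzero $c_A$, which is precisely where you get stuck.

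So the missing idea is to aim directly for ``decomposable $\Rightarrow$ product of strictly lower-degree elements'' and abandon the number-of-summands filtration. The paper marks this step ``clearly''; the underlying point is that $H_{\cat B,\gamma}$ is spanned by monomials in indecomposables of total degree~$\gamma$, and by~$1^\circ$ the only length-one monomial is $[M_\gamma]$ itself, so every other basis element must be reached via length~$\ge 2$ monomials. This is the formulation you should work with.
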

\begin{proof}
If~$M\in\Ob\cat B$ is indecomposable then~$1^\circ$ implies that
\begin{equation}\label{eq:exp-ind}
[M]=\Exp_{\cat B}|_{|M|}-\sum_{\text{$[N]$ decomposable}\,:\, |N|=|M|} [N].
\end{equation}
Given~$\gamma\in K_0^+(\cat B)$, define 
$$
H_{\cat B,< \gamma}=\displaystyle\bigoplus_{\mu\in K_0^+(\cat B)\,:\,\mu< \gamma}
H_{\cat B,\mu},\qquad H_{\cat B,\le \gamma}=H_{\cat B,<\gamma}\oplus H_{\cat B,\gamma}.
$$
Clearly, the condition~$2^\circ$ implies that 
if~$M$ is decomposable then
$[M]\in (H_{\cat B,<|M|})^2$.

We prove by induction on the partially ordered set~$K_0^+(\cat B)$ that $H_{\cat B,\le \gamma}\subset E_{\cat B}$
for all~$\gamma\in K_0^+(\cat B)$. 
If~$\gamma\in K_0^+(\cat B)$ is minimal then by~\eqref{eq:exp-ind}
$[M]=\Exp_{\cat B}|_\gamma$ 
with~$[M]$ indecomposable  and so the induction begins. 

Suppose that the claim is proven for all~$\gamma'<\gamma$. Thus, in particular, $H_{\cat B,<\gamma}\subset E_{\cat B}$. 
If~$\gamma$ is such that all objects~$M$ with~$|M|=\gamma$ are decomposable,
we are done by the induction hypothesis. Otherwise, for the unique~$[M]\in\Ind\cat A\cap\Iso\cat B$ with 
$|M|=\gamma$ we have~$[M]\in E_{\cat B}$ by~\eqref{eq:exp-ind} and the induction hypothesis, hence 
$H_{\cat B,\gamma}\subset E_{\cat B}$ and so~$H_{\cat B,\le \gamma}\subset E_{\cat B}$.
\end{proof}

\section{\texorpdfstring{$q$}{q}-commutators and proof of Theorem~\ref{th:local conjugation}}\label{sec:comm}

\subsection{}
Given an integral domain~$R$ and $q\in R$, $q\not=0$, define an $R$-linear operator $D_{q}$ on~$R[[x]]$ (the $q$-derivative) by
$$
(D_{q}f)(x)=\frac{f(qx)-f(x)}{(q-1)x}
$$
and set 
$$
D^{(j)}_{q}:=\frac{1}{[j]_q!}\, D^j_{q}.
$$
Since, clearly
\begin{equation}\label{eq:q-der}
D^{(j)}_{q}x^a=\qbinom{a}{j} x^{a-j},
\end{equation}
where 
$$
\qbinom{a}{j}=\frac{[a]_q!}{[j]_q! [a-j]_q!}
$$
is the Guassian $q$-binomial which is well-known to be a polynomial in~$q$,
the $D^{(j)}_q$ are well-defined $R$-endomorphisms of~$R[[x]]$ and of~$R[x]$ regarded as a subring of~$R[[x]]$.
We gather some elementary properties of~$D_q$ in the following Lemma.
\begin{lemma}\label{lem:q-leibn}
\begin{enumerate}[{\rm(a)}]
 \item\label{lem:q-leibn.a} 
For all $f,g\in R[[x]]$,
$$
D^{(j)}_q(f g)=\sum_{s=0}^j (D^{(j-s)}_q f)(q^s x) D^{(s)}_q g.
$$
\item\label{lem:q-leibn.b} If $f\in R[x]$ is invertible in~$R[[x]]$ then 
$$
\prod_{s=0}^j f(q^s x) D^{(j)}_q(f^{-1})\in R[x],\qquad j\ge 0.
$$
\end{enumerate}
\end{lemma}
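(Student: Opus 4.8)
The plan is to establish part~\eqref{lem:q-leibn.a} by reduction to monomials, and then to derive part~\eqref{lem:q-leibn.b} from it by a clearing-denominators induction.

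For~\eqref{lem:q-leibn.a}, both sides of the asserted identity are $R$-bilinear in $(f,g)$, and the coefficient of any fixed power $x^n$ on either side depends only on the finitely many homogeneous components of $f$ and $g$ of degree at most $n+j$. Hence it suffices to verify the formula when $f=x^a$ and $g=x^b$. By~\eqref{eq:q-der} the left-hand side equals $\qbinom{a+b}{j}x^{a+b-j}$, while on the right $D_q^{(s)}x^b=\qbinom{b}{s}x^{b-s}$ and $(D_q^{(j-s)}x^a)(q^sx)=q^{s(a-j+s)}\qbinom{a}{j-s}x^{a-j+s}$. After cancelling the common factor $x^{a+b-j}$, the identity becomes, upon setting $t=j-s$, the classical $q$-Chu--Vandermonde identity
$$
\qbinom{a+b}{j}=\sum_{t=0}^{j} q^{(a-t)(j-t)}\qbinom{a}{t}\qbinom{b}{j-t},
$$
which I would quote (or, if a self-contained argument is preferred, prove by induction on $j$ from the first-order rule $D_q(fg)=(D_qf)\,g+f(qx)\,D_qg$ together with the twisted commutation $D_q(f(qx))=q\,(D_qf)(qx)$).

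For~\eqref{lem:q-leibn.b}, I would apply part~\eqref{lem:q-leibn.a} to the identity $f\cdot f^{-1}=1$. Since $D_q^{(j)}(1)=0$ for $j\ge 1$ by~\eqref{eq:q-der}, isolating the $s=j$ term yields the recursion
$$
f(q^jx)\,D_q^{(j)}(f^{-1})=-\sum_{s=0}^{j-1}(D_q^{(j-s)}f)(q^sx)\,D_q^{(s)}(f^{-1}),\qquad j\ge 1.
$$
Writing $P_j=\prod_{s=0}^j f(q^sx)\,D_q^{(j)}(f^{-1})$, I claim $P_j\in R[x]$ for all $j\ge0$ by induction. The base case is $P_0=f\cdot f^{-1}=1$. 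For the step, multiplying the recursion by $\prod_{s=0}^{j-1}f(q^sx)$ and using $\prod_{t=0}^{j-1}f(q^tx)\,D_q^{(s)}(f^{-1})=\big(\prod_{t=s+1}^{j-1}f(q^tx)\big)P_s$ gives
$$
P_j=-\sum_{s=0}^{j-1}(D_q^{(j-s)}f)(q^sx)\,\Big(\prod_{t=s+1}^{j-1}f(q^tx)\Big)P_s.
$$
Here $(D_q^{(j-s)}f)(q^sx)$ and the finite products $\prod_{t=s+1}^{j-1}f(q^tx)$ are polynomials because $f$ is, and each $P_s$ with $s<j$ lies in $R[x]$ by the inductive hypothesis; hence $P_j\in R[x]$.

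The computational heart of the argument is the $q$-Vandermonde identity underlying~\eqref{lem:q-leibn.a}; the only genuinely structural point is the organization of the induction in~\eqref{lem:q-leibn.b}, where one must carry the cleared products $P_s$ rather than the power series $D_q^{(s)}(f^{-1})$ themselves, so that every factor appearing on the right-hand side is manifestly a polynomial. I expect the main care to be needed in the bilinearity/continuity justification that reduces~\eqref{lem:q-leibn.a} to monomials, and in tracking the dilation factors $q^{s(a-j+s)}$ correctly.
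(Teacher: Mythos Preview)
Your argument is correct. Part~(b) follows the same clearing-denominators induction as the paper (apply part~(a) to $f\cdot f^{-1}=1$, isolate the top term, multiply through by the product of the $f(q^t x)$, and invoke the inductive hypothesis), so there is nothing to add there.

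For part~(a) you take a genuinely different route. The paper proceeds by a direct induction on~$j$: it checks the $j=1$ case
\[
D_q(fg)=f(qx)D_q g + g\,D_q f
\]
by hand, and for the inductive step applies $D_q$ to the formula for $D_q^{(r)}(fg)$, using $D_q(P(q^a x))=q^a(D_qP)(q^a x)$ and collecting terms via $q^s[r+1-s]_q+[s]_q=[r+1]_q$. Your approach instead reduces to monomials by bilinearity and degree-truncation, and then the identity becomes exactly $q$-Chu--Vandermonde. This is cleaner if one is willing to quote that identity, and it makes the combinatorial content of the formula transparent; the paper's induction is more self-contained and requires no outside input. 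Since you also sketch the inductive alternative (``prove by induction on $j$ from the first-order rule\ldots''), you effectively cover both approaches.
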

\begin{proof}
For $j=1$ it is easy to check that
\begin{equation}\label{eq:q-leibn-1}
D_q(fg)=f(qx)D_q(g)+g (D_q f).
\end{equation}
For the inductive step, we have
$$
D_q^{(r+1)}(f g)=\frac{1}{[r+1]_q}\sum_{s=0}^r D_q((D_q^{(r-s)}f)(q^s x) D^{(s)}_q g).
$$
Since $D_q (P(q^a x))=q^a (D_q P)(q^a x)$, we obtain
$$
D_q^{(r+1)}(fg)=\frac{1}{[r+1]_q}\sum_{s=0}^r q^s [r+1-s]_q (D_q^{(r+1-s)}f)(q^s x) D^{(s)}_q g+
[s+1]_q (D_q^{(r-s)}f)(q^{s+1} x)D^{(s+1)}_q g.
$$
To complete part~\eqref{lem:q-leibn.a}, it remains to observe that $q^s [r+1-s]_q+[s]_q=[r+1]_q$.

For part~\eqref{lem:q-leibn.b}, we use induction on~$j$. Note that the assertion trivially holds for~$j=0$ and~\eqref{lem:q-leibn.a} implies
$$
\sum_{j=0}^{r} (D_q^{(r-j)} f)(q^j x)D_q^{(j)}(f^{-1})=0.
$$
Therefore,
$$
\Big(\prod_{j=0}^{r+1} f(q^j x)\Big)D_q^{(r+1)}(f^{-1})=-\sum_{j=0}^r (D_q^{(r-j)}f)(q^j x)\Big(\prod_{t=0}^r f(q^t x)\Big) D_q^{(j)}(f^{-1}).
$$
By the induction hypothesis, $\Big(\prod_{t=0}^j f(q^t x)\Big)D_q^{(j)}(f^{-1})\in R[x]$, hence the right hand side 
of the above expression is contained in~$R[x]$. 
\end{proof}

\subsection{}\label{sec:adjs}
Let $F$ be a field and let~$\mathcal A$ be a unital associative $F$-algebra. Given $x\in\mathcal A$ and~$q\in F^\times$, let~$\ad_q x$, $\ad_q^* x$ be the $F$-linear
endomorphisms of~$\mathcal A$ defined by $\ad_q x(y)=xy-q yx$ and $\ad_q^* x(y)=yx-q xy$ for all~$y\in\mathcal A$.
Since, clearly $\ad_q x \circ \ad_{q'}x=\ad_{q'}x\circ \ad_q x$, we can define $\ad_{(q_0,\dots,q_r)}x=\prod_{i=0}^r \ad_{q_i}x$
and $\ad^*_{(q_0,\dots,q_r)}x=\prod_{i=0}^r \ad_{q_i}^* x$.
\begin{lemma}\label{lem:seq}
Let $x,y\in\mathcal A$ and $(q_0,\dots,q_r)\in (F^\times)^{r+1}$. Then
\begin{equation}\label{eq:key-id-a}
\begin{split}
&(\ad_{(q_0,\dots,q_r)}x)(y)=\sum_{j=0}^{r+1} (-1)^j e_j(q_0,\dots,q_r) x^{r+1-j}y x^j,\\
&(\ad_{(q_0,\dots,q_r)}^* x)(y)=\sum_{j=0}^{r+1} (-1)^j e_j(q_0,\dots,q_r) x^{j}y x^{r+1-j},
\end{split}
\end{equation}
and
\begin{equation}\label{eq:key-id-0}
\begin{split}
&x^r y=\sum_{j=0}^r h_{r-j}(q_0,\dots,q_j) (\ad_{(q_0,\dots,q_{j-1})}x)(y) x^{r-j},\\
&yx^r=\sum_{j=0}^r h_{r-j}(q_0,\dots,q_j) x^{r-j} (\ad_{(q_0,\dots,q_{j-1})}^*x)(y),
\end{split}
\end{equation}
where $e_s$ (respectively, $h_s$) denotes the elementary (respectively, the complete) symmetric polynomial of degree~$s$. In particular,
if $q_0,q\in F^\times$
\begin{equation}\label{eq:key-id-b}
\begin{split}
&(\ad_{q_0(1,\dots,q^r)}x)(y)=\sum_{j=0}^r (-1)^j q_0^j q^{\binom{j}2}\qbinom{r+1}{j} x^{r+1-j}y x^j,
\\
&(\ad_{q_0(1,\dots,q^r)}^*x)(y)=\sum_{j=0}^r (-1)^j q_0^j q^{\binom{j}2}\qbinom{r+1}{j} x^{j}y x^{r+1-j}
\end{split}
\end{equation}
and
\begin{equation}\label{eq:key-id}
x^r y=\sum_{j=0}^r q_0^{r-j} \qbinom{r}{j} (\ad_{q_0(1,\dots,q^{j-1})}x)(y) x^{r-j},\qquad 
yx^r=\sum_{j=0}^r q_0^{r-j}\qbinom{r}{j} x^{r-j} (\ad_{q_0(1,\dots,q^{j-1})}^*x)(y).
\end{equation}
\end{lemma}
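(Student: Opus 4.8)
The plan is to reduce all four identities to statements in a commutative polynomial ring. Introduce the left and right multiplication operators $L_x,R_x\in\End_F\mathcal A$ given by $L_x(y)=xy$ and $R_x(y)=yx$; since $\mathcal A$ is associative one has $L_xR_x=R_xL_x$, and by definition $\ad_q x=L_x-qR_x$ and $\ad_q^*x=R_x-qL_x$. Consequently every operator appearing in the lemma is a polynomial in the two commuting variables $L_x,R_x$, and each asserted identity becomes an identity in the commutative ring $F[L_x,R_x]$ evaluated at $y$, using $L_x^aR_x^b(y)=x^ayx^b$.

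With this in place, \eqref{eq:key-id-a} is immediate: I would expand
\[
\ad_{(q_0,\dots,q_r)}x=\prod_{i=0}^r (L_x-q_i R_x)=\sum_{j=0}^{r+1}(-1)^j e_j(q_0,\dots,q_r)\,L_x^{r+1-j}R_x^j,
\]
which is just the defining expansion of the elementary symmetric polynomials as coefficients of $\prod_i(L_x-q_iR_x)$; applying this to $y$ gives the first line, and the starred identity follows by interchanging $L_x\leftrightarrow R_x$ since $\ad_q^* x=R_x-qL_x$.

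The substance is \eqref{eq:key-id-0}, which inverts \eqref{eq:key-id-a}. Writing $A_j=\ad_{(q_0,\dots,q_{j-1})}x=\prod_{i=0}^{j-1}(L_x-q_iR_x)$ with $A_0=1$, I would prove
\[
L_x^r=\sum_{j=0}^r h_{r-j}(q_0,\dots,q_j)\,A_j R_x^{r-j}
\]
in $F[L_x,R_x]$ by induction on $r$. Multiplying the identity for $r$ by $L_x$ and using commutativity to move $L_x$ past $R_x^{r-j}$, the only algebraic move required is $A_jL_x=A_{j+1}+q_jA_jR_x$, immediate from $A_{j+1}=A_j(L_x-q_jR_x)$. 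After reindexing the two resulting sums and collecting the coefficient of $A_jR_x^{r+1-j}$, the inductive step reduces exactly to the Pascal-type recurrence
\[
h_{r+1-j}(q_0,\dots,q_j)=h_{r+1-j}(q_0,\dots,q_{j-1})+q_j\,h_{r-j}(q_0,\dots,q_j)
\]
for complete homogeneous symmetric polynomials, with the boundary terms $j=0$ and $j=r+1$ checked directly from $h_n(q_0)=q_0^n$ and $h_0=1$. The main obstacle is entirely bookkeeping: the argument list of $h_{r-j}$ grows with $j$, so the reindexing of the two sums must be matched precisely to the variable $q_j$ that is adjoined in the recurrence; once this alignment is correct the step is forced. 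The starred identity in \eqref{eq:key-id-0} follows by the symmetry $L_x\leftrightarrow R_x$.

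Finally, \eqref{eq:key-id-b} and \eqref{eq:key-id} are the specializations $q_i=q_0q^i$ of \eqref{eq:key-id-a} and \eqref{eq:key-id-0}. I would substitute the standard evaluations
\[
e_j(1,q,\dots,q^r)=q^{\binom{j}{2}}\qbinom{r+1}{j},\qquad h_{r-j}(1,q,\dots,q^j)=\qbinom{r}{j},
\]
together with the homogeneity relations $e_j(q_0,\dots,q_0q^r)=q_0^j\,e_j(1,\dots,q^r)$ and $h_{r-j}(q_0,\dots,q_0q^j)=q_0^{r-j}\,h_{r-j}(1,\dots,q^j)$, to read off the coefficients $(-1)^jq_0^jq^{\binom{j}{2}}\qbinom{r+1}{j}$ and $q_0^{r-j}\qbinom{r}{j}$ asserted in \eqref{eq:key-id-b} and \eqref{eq:key-id}.
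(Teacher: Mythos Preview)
Your proof is correct. The key device—working in the commutative subring of $\End_F\mathcal A$ generated by $L_x$ and $R_x$—is a genuine simplification over the paper's argument for \eqref{eq:key-id-a}: the paper proves that identity by a separate induction on~$r$ using the recurrence $e_j(q_0,\dots,q_{r-1})+q_re_{j-1}(q_0,\dots,q_{r-1})=e_j(q_0,\dots,q_r)$, whereas in your framework the expansion of $\prod_{i=0}^r(L_x-q_iR_x)$ is literally the generating-function definition of the $e_j$, so no induction is needed. Your treatment of \eqref{eq:key-id-0} is essentially the same as the paper's: the paper sets $y_j=(\ad_{(q_0,\dots,q_{j-1})}x)(y)$ and uses $xy_j=q_jy_jx+y_{j+1}$, which is exactly your $L_xA_j=q_jA_jR_x+A_{j+1}$ applied to~$y$, and both arguments reduce the inductive step to the identical recurrence $h_{r+1-j}(q_0,\dots,q_j)=h_{r+1-j}(q_0,\dots,q_{j-1})+q_jh_{r-j}(q_0,\dots,q_j)$. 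The specializations to \eqref{eq:key-id-b} and \eqref{eq:key-id} are handled the same way in both proofs. The operator viewpoint also makes the $\ad^*$ statements transparently follow from the $L_x\leftrightarrow R_x$ symmetry, which is a small bonus.
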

\begin{proof}
We prove the identities involving~$\ad$, the proof of the ones involving~$\ad^*$ being similar. The argument is by
induction on~$r$, the case~$r=0$ being obvious in both~\eqref{eq:key-id-a} and~\eqref{eq:key-id-0}. 
For the inductive step in~\eqref{eq:key-id-a}, we have
\begin{align*}
(\ad_{(q_0,\dots,q_r)}x)(y)&=\sum_{j=0}^r (-1)^j e_j(q_0,\dots,q_{r-1})x^{r+1-j}y x^j-\sum_{j=0}^r (-1)^j e_j(q_0,\dots,q_{r-1})q_r x^{r-j}y x^{j+1}\\
&=\sum_{j=0}^{r+1} (-1)^j (e_j(q_0,\dots,q_{r-1})+e_{j-1}(q_0,\dots,q_{r-1})q_r)x^{r+1-j}y x^j
\end{align*}
where we use the usual convention that~$e_s(x_1,\dots,x_k)=0$ if~$s<0$ or~$s>k$.
To prove~\eqref{eq:key-id-0}, 
let~$y_0=y$ and define $y_{j+1}=(\ad_{q_j} x)(y_j)$, $j\ge 0$. 
Then
\begin{align*}
x^{r+1}y&=\sum_{j=0}^r h_{r-j}(q_0,\dots,q_j) (q_j y_j x^{r+1-j}+y_{j+1}x^{r-j})
\\&=
\sum_{j=0}^{r+1} (h_{r-j}(q_0,\dots,q_j)q_j+h_{r+1-j}(q_0,\dots,q_{j-1}))y_jx^{r+1-j}. 
\end{align*}
To complete both inductive steps, it remains to observe that 
$$
e_j(q_0,\dots,q_{r-1})+e_{j-1}(q_0,\dots,q_{r-1})q_r=e_j(q_0,\dots,q_r)
$$
and
$$h_{r-j}(q_0,\dots,q_j)q_j+h_{r+1-j}(q_0,\dots,q_{j-1})
=h_{r+1-j}(q_0,\dots,q_j)$$ 
which follow from the formulae
$$
\sum_{s=0}^{k+1} e_s(x_0,\dots,x_k)t^s=\prod_{i=0}^k (1+x_i t),\qquad \sum_{s\ge 0} h_s(x_0,\dots,x_k)t^s=\prod_{j=0}^k \frac{1}{1-x_j t}.
$$
Since 
\begin{equation}\label{eq:well-known}
\prod_{r=0}^n (1+q^r t)=\sum_{s\ge 0} q^{\binom{s}2}\qbinom{n+1}{s}t^s,\qquad
\prod_{r=0}^{n} \frac1{1-q^r t}=\sum_{s\ge 0}\qbinom{s+n}{s}t^s,
\end{equation}
\eqref{eq:key-id-b} and~\eqref{eq:key-id} follow, respectively, from~\eqref{eq:key-id-a} and~\eqref{eq:key-id-0}
\end{proof}
Suppose that $\mathcal A$ admits a completion $\widehat{\mathcal A}$. 
\begin{corollary}\label{cor:seq}
Let $x,y\in\mathcal A$ and
suppose that $(\ad_{q_0(1,q,\dots,q^{r})}x)(y)=0$ for some~$q_0,q\in F^\times$ and~$r\ge 0$ and 
that the assignment $t\mapsto x$ extends to an algebra homomorphism $F[[t]]\to \widehat{\mathcal A}$. Then for any $P\in F[[t]]$ we have in  
$\widehat{\mathcal A}$
$$
P(x)y=\sum_{j=0}^r (\ad_{q_0(1,q,\dots,q^{j-1})} x)(y) (D^{(j)}_{q}P)(q_0 x).
$$
Similarly, if $(\ad^*_{q_0(1,q,\dots,q^{s})}x)(y)=0$, 
$$
yP(x)=\sum_{j=0}^s (D^{(j)}_{q}P)(q_0 x)(\ad^*_{q_0(1,q,\dots,q^{j-1})} x)(y) 
$$
\end{corollary}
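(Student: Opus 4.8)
The plan is to reduce the corollary to the polynomial identity~\eqref{eq:key-id} of Lemma~\ref{lem:seq} together with the $q$-derivative formula~\eqref{eq:q-der}. First I would treat the case $P(t)=t^r$ for a monomial and check that the desired expression matches~\eqref{eq:key-id} once one substitutes $x\mapsto q_0 x$ into the $q$-derivatives. Concretely, by~\eqref{eq:q-der} we have $(D^{(j)}_q P)(q_0 x)=\qbinom{r}{j}(q_0 x)^{r-j}=q_0^{r-j}\qbinom{r}{j}x^{r-j}$, so the right-hand side of the asserted identity becomes $\sum_{j=0}^r q_0^{r-j}\qbinom{r}{j}(\ad_{q_0(1,q,\dots,q^{j-1})}x)(y)\,x^{r-j}$, which is exactly the left identity in~\eqref{eq:key-id} and equals $x^r y=P(x)y$. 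This settles the claim for monomials, and by $F$-linearity it holds for all $P\in F[t]$.

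Next I would address the vanishing hypothesis, which is what makes the finite sum legitimate even when $P$ is an arbitrary power series. The point is that $(\ad_{q_0(1,q,\dots,q^{r})}x)(y)=0$ forces $(\ad_{q_0(1,q,\dots,q^{j-1})}x)(y)$ to be ``stable'' for $j>r$ in the following sense: applying one more factor $\ad_{q_0 q^{r}}x$ annihilates the length-$(r{+}1)$ bracket, so although the formula in~\eqref{eq:key-id} for a monomial $t^s$ with $s>r$ a priori runs up to $j=s$, all terms with $j>r$ drop out. I would verify this by noting that for $s>r$ the term indexed by $j$ in~\eqref{eq:key-id} contains the factor $(\ad_{q_0(1,q,\dots,q^{j-1})}x)(y)$, which for $j\ge r+1$ is obtained from the vanishing bracket $(\ad_{q_0(1,q,\dots,q^{r})}x)(y)=0$ by applying further operators $\ad_{q_0 q^{t}}x$; hence it is zero. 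Therefore for every monomial $P=t^s$ the sum in~\eqref{eq:key-id} truncates at $j=r$, giving precisely $P(x)y=\sum_{j=0}^r (\ad_{q_0(1,q,\dots,q^{j-1})}x)(y)(D^{(j)}_q P)(q_0 x)$ with the same fixed range $0\le j\le r$ independent of $s$.

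With the truncated monomial identity in hand, I would pass to power series by a continuity/completion argument. Writing $P=\sum_{s\ge 0} c_s t^s$, both sides of the claimed identity are $F$-linear and continuous in the topology of~$\widehat{\mathcal A}$ under the hypothesis that $t\mapsto x$ extends to a homomorphism $F[[t]]\to\widehat{\mathcal A}$; the left side is $P(x)y=\big(\sum_s c_s x^s\big)y$, and the right side is $\sum_{j=0}^r (\ad_{q_0(1,q,\dots,q^{j-1})}x)(y)\big(\sum_s c_s (D^{(j)}_q t^s)(q_0 x)\big)=\sum_{j=0}^r (\ad_{q_0(1,q,\dots,q^{j-1})}x)(y)(D^{(j)}_q P)(q_0 x)$, since the $D^{(j)}_q$ commute with infinite $F$-linear combinations by~\eqref{eq:q-der}. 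Summing the monomial identities and using convergence in $\widehat{\mathcal A}$ yields the result. The $\ad^*$ version follows verbatim from the right-hand identities in~\eqref{eq:key-id}, replacing $\ad$ by $\ad^*$ and reversing the order of the two factors in each summand.

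The main obstacle I anticipate is the bookkeeping in the second step: one must argue cleanly that the bracket $(\ad_{q_0(1,q,\dots,q^{j-1})}x)(y)$ vanishes for all $j>r$, i.e.\ that the operators $\ad_{q_i}x$ commute (already noted before Lemma~\ref{lem:seq}) so that the order in which the factors $\ad_{q_0 q^{t}}x$ are applied is immaterial and the length-$(r{+}1)$ vanishing propagates to all longer brackets. Everything else is a routine transcription of~\eqref{eq:key-id} and~\eqref{eq:q-der}.
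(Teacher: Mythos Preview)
Your proposal is correct and is exactly the intended derivation: the paper states Corollary~\ref{cor:seq} without proof, as an immediate consequence of~\eqref{eq:key-id} and~\eqref{eq:q-der}, and your argument (verify for monomials via~\eqref{eq:key-id}, truncate the sum using the vanishing hypothesis and the commutativity of the $\ad_{q_i}x$, then pass to $F[[t]]$ by linearity and convergence in~$\widehat{\mathcal A}$) is precisely how one unpacks that. The only cosmetic issue is that in your first paragraph you reuse the letter~$r$ for the degree of the monomial, clashing with the fixed~$r$ from the hypothesis; writing $P(t)=t^s$ throughout would avoid the ambiguity.
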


Given $q\in F^\times$, define
$$
\exp_q(t)=\sum_{j\ge 0} \frac{t^j}{[j]_q!}\in F[[t]].
$$
\begin{lemma}
In $F [[t]]$, we have 
$D_q(\exp_q(t))=\exp_q(t)$ and for $\nu\in\mathbb Z$
$$
\exp_q(t)^{-1}\exp_q(q^\nu t)=:\Phi_{\nu}(t,q)
=\begin{cases}\displaystyle\prod_{r=0}^{\nu-1}(1+q^r(q-1)t),&\nu\ge 0\\
                               \displaystyle\prod_{r=1}^{-\nu}(1+q^{-r}(q-1)t)^{-1},&\nu<0.
                              \end{cases}
$$
\end{lemma}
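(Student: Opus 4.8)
The plan is to establish the two assertions separately, with the product formula following almost immediately from a functional equation that the first identity produces for free.

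First I would verify $D_q(\exp_q(t))=\exp_q(t)$ directly from the power series. Specializing \eqref{eq:q-der} to $j=1$ gives $D_q t^j=[j]_q t^{j-1}$ (since $[1]_q!=1$ and $\qbinom{a}{1}=[a]_q$), so applying $D_q$ termwise to $\exp_q(t)=\sum_{j\ge 0} t^j/[j]_q!$ yields $\sum_{j\ge 1}[j]_q t^{j-1}/[j]_q!=\sum_{j\ge 1} t^{j-1}/[j-1]_q!$, which is $\exp_q(t)$ after reindexing. The only point worth recording is that $\exp_q(t)$ has constant term $1$ and is therefore invertible in $F[[t]]$, so that $\exp_q(t)^{-1}$, and hence $\Phi_\nu(t,q)$, are well-defined.

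The crucial observation is that the identity $D_q\exp_q=\exp_q$, combined with the defining formula $(D_q f)(t)=(f(qt)-f(t))/((q-1)t)$, rearranges into the functional equation
$$\exp_q(qt)=(1+(q-1)t)\exp_q(t).$$
Substituting $t\mapsto q^{\nu-1}t$ gives $\exp_q(q^\nu t)=(1+(q-1)q^{\nu-1}t)\exp_q(q^{\nu-1}t)$, and iterating this for $\nu\ge 0$ telescopes to $\exp_q(q^\nu t)=\prod_{r=0}^{\nu-1}(1+q^r(q-1)t)\,\exp_q(t)$; dividing by $\exp_q(t)$ yields the stated expression for $\Phi_\nu$ in the case $\nu\ge 0$.

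For $\nu<0$ I would rewrite the functional equation as $\exp_q(q^{-1}u)=(1+(q-1)q^{-1}u)^{-1}\exp_q(u)$ (set $u=qt$) and iterate $-\nu$ times starting from $u=t$, obtaining $\exp_q(q^\nu t)=\prod_{r=1}^{-\nu}(1+q^{-r}(q-1)t)^{-1}\exp_q(t)$, which is the second branch of the formula. There is no genuine obstacle here: the whole argument reduces to a one-line functional equation followed by an induction on $|\nu|$, and the only care required is the index and sign bookkeeping in the negative case, where a formal induction on $-\nu$ can replace the informal iteration if full rigor is desired.
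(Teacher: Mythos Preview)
Your proof is correct and takes a genuinely different route from the paper's. You extract the functional equation $\exp_q(qt)=(1+(q-1)t)\exp_q(t)$ from $D_q\exp_q=\exp_q$ and then iterate it, reducing the entire computation to an induction on $|\nu|$. The paper instead works directly with power series: it multiplies the series for $\exp_q(t)^{-1}$ and $\exp_q(q^\nu t)$, collects the coefficient of $t^r$ as a $q$-binomial sum, evaluates that sum via the product identity~\eqref{eq:well-known}, and then applies~\eqref{eq:well-known} a second time (in the forward direction for $\nu\ge 0$, the inverse direction for $\nu<0$) to recognize the result as the claimed product. Your argument is shorter and more conceptual, and it avoids any explicit knowledge of the series for $\exp_q(t)^{-1}$; the paper's computation is more hands-on but has the minor advantage of exhibiting the coefficients of $\Phi_\nu$ explicitly along the way.
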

\begin{proof}
The first identity is obvious. 
Since 
$$
\exp_q(t)^{-1}\exp_q(q^\nu t)=\sum_{a,b\ge 0}\frac{(-1)^a q^{\nu b}t^{a+b}}{[a]_q![b]_q!}=
\sum_{r\ge 0} \Big(\sum_{a=0}^r (-1)^a q^{\binom{a}2+\nu(r- a)}\qbinom{r}{a}\Big) \frac{t^r}{[r]_q!}
$$
and 
by~\eqref{eq:well-known} the inner sum equals $$
q^{\nu r}\,\prod_{a=0}^{r-1} (1-q^{a-\nu}).$$
Suppose first that~$\nu\ge0$. Then we can rewrite it as
$$ 
\prod_{a=0}^{r-1}(q^{\nu}-q^a)=[r]_q!q^{\binom{r}2}(q-1)^r\qbinom{\nu}{r}
$$ 
hence 
$$
\exp_q(t)^{-1}\exp_q(q^\nu t)=\sum_{r\ge 0} q^{\binom{r}2}\qbinom{\nu}{r}((q-1)t)^r
$$
which by~\eqref{eq:well-known} equals $\prod_{r=0}^{\nu-1} (1+q^r (q-1)t)$.

If~$\nu<0$, we write
$$
q^{\nu r}\,\prod_{a=0}^{r-1}(1-q^{-\nu+a})=(-1)^r q^{\nu r}(q-1)^r\prod_{a=0}^{r-1} [-\nu+r-1-a]_q=(-1)^r q^{\nu r}[r]_q!(q-1)^r \qbinom{-\nu+r-1}{r}
$$
hence
$$
\exp_q(t)^{-1}\exp_q(q^\nu t)=\sum_{r\ge 0}\qbinom{-\nu+r-1}{r} (-(q-1)q^\nu t)^r=\prod_{s=0}^{-\nu-1} (1+(q-1)q^{\nu+s}t)^{-1}
$$
where we used the second identity in~\eqref{eq:well-known}.
\end{proof}
\begin{corollary}\label{cor:act-conj}
Let~$x,y\in\mathcal A$ and assume that $(\ad_{(q^{\nu},q^{\nu+1},\dots,q^{\nu+r})}x)(y)=0$ for some~$q\in F^\times$, $\nu\in\mathbb Z$ and~$r\ge 0$. Then in~$\widehat{\mathcal A}$
$$
\exp_q(x)y\exp_q(x)^{-1}=
\Big(\sum_{j=0}^{r} \frac{1}{[j]_{q}!}\,(\ad_{(q^\nu,\dots,q^{\nu+j-1})}x)(y)\Big)\Phi_\nu(x,q)
$$
and
$$
\exp_q(x)^{-1}y\exp_q(x)=
\sum_{j=0}^{r} \frac{(-1)^j q^{\binom{j}2}}{[j]_{q}!}\,(\ad_{(q^{\nu},\dots,q^{\nu+j-1})}x)(y)\Phi_{-j-\nu}(q^{\nu+j}x,q).
$$
In particular, $\exp(x)y\exp(x)^{-1}\in\mathcal A$ (respectively, $\exp(x)^{-1}y\exp(x)\in\mathcal A$) if and only 
if~$\nu\ge 0$ (respectively, $-\nu\ge r$). 
Similarly, if $(\ad_{(q^{\mu},q^{\mu+1},\dots,q^{\mu+s})}^*x)(y)=0$ for some~$q\in R$, $\mu\in\mathbb Z$ and $s\ge 0$
$$
\exp_q(x)^{-1} y\exp_q(x)=
\Phi_{\mu}(x,q)
\Big(\sum_{j=0}^{s} \frac{1}{[j]_{q}!}\,(\ad_{(q^\mu,\dots,q^{\mu+j-1})}^*x)(y)\Big)
$$
and
$$
\exp_q(x)y\exp_q(x)^{-1}=
\sum_{j=0}^{s} \frac{(-1)^j q^{\binom{j}2}}{[j]_{q}!}\,\Phi_{-j-\mu}(q^{j+\mu}x,q)(\ad_{(q^{\mu},\dots,q^{\mu+j-1})}^*x)(y).
$$
In particular, $\exp_q(x)^{-1}y\exp_q(x)\in\mathcal A$ (respectively, $\exp_q(x) y\exp_q(x)^{-1}\in\mathcal A$) if and only 
if~$\mu\ge 0$ (respectively, $-\mu \ge s$).
\end{corollary}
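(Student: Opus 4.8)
The plan is to treat the conjugation $\exp_q(x)\,y\,\exp_q(x)^{-1}$ purely as a formal computation inside $\widehat{\mathcal A}$, reducing everything to the machinery already assembled in Corollary~\ref{cor:seq} and the preceding Lemma on $\Phi_\nu$. First I would observe that the hypothesis $(\ad_{(q^\nu,\dots,q^{\nu+r})}x)(y)=0$ is exactly of the form $(\ad_{q_0(1,q,\dots,q^r)}x)(y)=0$ with $q_0=q^\nu$, so Corollary~\ref{cor:seq} applies verbatim with $P=\exp_q$. Writing $y\exp_q(x)=\exp_q(x)\cdot\bigl(\exp_q(x)^{-1}y\exp_q(x)\bigr)$ is not quite the right bracketing; instead the cleanest route is to expand $\exp_q(x)\,y$ directly using Corollary~\ref{cor:seq}, which yields
$$
\exp_q(x)\,y=\sum_{j=0}^{r}(\ad_{q^\nu(1,q,\dots,q^{j-1})}x)(y)\,(D_q^{(j)}\exp_q)(q^\nu x).
$$

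Next I would invoke the identity $D_q\exp_q=\exp_q$, which gives $D_q^{(j)}\exp_q=\exp_q$ for every $j$, so $(D_q^{(j)}\exp_q)(q^\nu x)=\exp_q(q^\nu x)$. Substituting and then multiplying on the right by $\exp_q(x)^{-1}$ converts each occurrence of $\exp_q(q^\nu x)$ into $\exp_q(x)^{-1}\exp_q(q^\nu x)=\Phi_\nu(x,q)$, by the definition of $\Phi_\nu$ in the Lemma. Because $\ad_{q^\nu(1,\dots,q^{j-1})}x=\ad_{(q^\nu,\dots,q^{\nu+j-1})}x$ and $[j]_q!$ is absorbed into the normalized $D_q^{(j)}$, this is precisely the asserted first formula. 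The second formula, for $\exp_q(x)^{-1}y\exp_q(x)$, I would obtain the same way but starting from the right-hand expansion of $y\exp_q(x)$ in Corollary~\ref{cor:seq}: here the sign $(-1)^j q^{\binom j2}$ and the shifted argument $q^{\nu+j}x$ in $\Phi_{-j-\nu}$ arise because one must commute $\exp_q(x)^{-1}$ past the factors, equivalently because $(D_q^{(j)}\exp_q)(q^\nu x)$ now appears to the \emph{left} of the adjoint terms and one records $\exp_q(x)^{-1}\exp_q(q^\nu x)$ evaluated with the opposite shift. The two $\ad^*$ formulas follow by the symmetric statements in Corollary~\ref{cor:seq}, exchanging the roles of left and right.

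The final integrality assertions are then read off from the explicit product formula for $\Phi_\nu(t,q)$. The key point is that $\Phi_\nu(t,q)$ is a \emph{polynomial} in $t$ precisely when $\nu\ge 0$, and a rational function with genuine denominators when $\nu<0$; thus $\exp_q(x)y\exp_q(x)^{-1}$ lies in $\mathcal A$ rather than its completion exactly when every $\Phi_\nu(x,q)$ occurring is polynomial, i.e. when $\nu\ge 0$. For $\exp_q(x)^{-1}y\exp_q(x)$ the relevant factor is $\Phi_{-j-\nu}$ with $0\le j\le r$, which is polynomial for all such $j$ iff $-\nu\ge r$, giving the stated criterion. I expect the only genuinely delicate step to be tracking the sign and the argument-shift in the second (and fourth) formula: one must verify that commuting $\exp_q(x)^{-1}$ to the correct side produces $\Phi_{-j-\nu}(q^{\nu+j}x,q)$ and not some other shift, which amounts to carefully applying the $\Phi$-Lemma with the substitution $t\mapsto q^{\nu+j}x$ and matching exponents. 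Everything else is a direct substitution into the already-proven identities.
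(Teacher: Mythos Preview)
Your derivation of the first identity is correct and is the intended one: apply Corollary~\ref{cor:seq} with $P=\exp_q$ and $q_0=q^\nu$, use $D_q^{(j)}\exp_q=\tfrac{1}{[j]_q!}\exp_q$ (note the factor $\tfrac{1}{[j]_q!}$; your statement ``$D_q^{(j)}\exp_q=\exp_q$'' is off, though you recover the factor a line later), and identify $\exp_q(q^\nu x)\exp_q(x)^{-1}=\Phi_\nu(x,q)$ since both are power series in~$x$. The integrality criteria you read off from the polynomiality of $\Phi_\nu$ are also correct.

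Your plan for the second identity has a genuine gap. You propose to use ``the right-hand expansion of $y\exp_q(x)$ in Corollary~\ref{cor:seq}'', but that expansion---the formula for $yP(x)$---is only available under the $\ad^*$ hypothesis, whereas the second identity is still stated under the $\ad$ hypothesis $(\ad_{(q^\nu,\dots,q^{\nu+r})}x)(y)=0$. So you cannot switch sides here, and the vague talk of ``commuting $\exp_q(x)^{-1}$ past the factors'' does not correspond to any available identity. The correct move is to stay with the \emph{first} formula of Corollary~\ref{cor:seq} (the one for $P(x)y$) but now take $P=\exp_q^{-1}$. An easy induction using $D_q(f(qx))=q\,(D_qf)(qx)$ together with $D_q(\exp_q^{-1})(x)=-\exp_q(qx)^{-1}$ (from the $q$-Leibniz rule applied to $\exp_q\cdot\exp_q^{-1}=1$) gives
\[
D_q^{(j)}(\exp_q^{-1})(x)=\frac{(-1)^j q^{\binom{j}{2}}}{[j]_q!}\,\exp_q(q^j x)^{-1},
\]
and \emph{this} is the true source of the sign $(-1)^j q^{\binom j2}$. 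Substituting yields
\[
\exp_q(x)^{-1}y=\sum_{j=0}^{r}\frac{(-1)^j q^{\binom j2}}{[j]_q!}\,(\ad_{(q^\nu,\dots,q^{\nu+j-1})}x)(y)\,\exp_q(q^{\nu+j}x)^{-1},
\]
and multiplying on the right by $\exp_q(x)$ produces $\exp_q(q^{\nu+j}x)^{-1}\exp_q(x)=\Phi_{-j-\nu}(q^{\nu+j}x,q)$ by the $\Phi$-lemma with $t=q^{\nu+j}x$, exactly as claimed. The fourth identity has the analogous issue and the analogous fix: use $P=\exp_q^{-1}$ in the $\ad^*$ half of Corollary~\ref{cor:seq}.
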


Recall that a multiplicatively closed subset $\mathcal S$ of an algebra~$\mathcal A$ is said to be an Ore set if~$\mathcal S$
does not contain zero divisors and 
for all $a\in\mathcal A$, $s\in\mathcal S$ there exist $u,u'\in\mathcal A$ and $t,t'\in \mathcal S$ such that 
$at=su$ and $t'a=u's$.
\begin{proposition}\label{prop:abs-Ore}
Let~$\mathcal Y$ be a generating set for~$\mathcal A$. Let~$x\in\mathcal A$ be such that the assignment 
$t\mapsto x$ extends to an algebra homomorphism $F[[t]]\mapsto \widehat{\mathcal A}$ and suppose that there exists $q\in F^\times$ such that 
for all $y\in\mathcal Y$ either $(\ad_{(q^{\nu},q^{\nu+1},\dots,q^{\nu+r})}x)(y)=0$ or
$(\ad_{(q^{\nu},q^{\nu+1},\dots,q^{\nu+r})}^*x)(y)=0$ for some $\nu=\nu(y)\in\mathbb Z$, $r=r(y)\ge 0$. 
Let~$\mathcal S_x$ be the minimal multiplicatively closed subset of~$\mathcal A$
containing $1+q^s(q-1)x$ for all~$s\in\mathbb Z$. Then $\mathcal S_x$ is an Ore set in~$\mathcal A$ and 
the assignment $z\mapsto \exp_q(x)z\exp_q(x)^{-1}$, $z\in\mathcal A$ defines an automorphism of
$\mathcal A[\mathcal S_x^{-1}]$. 
\end{proposition}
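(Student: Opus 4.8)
The plan is to work throughout inside the completion $\widehat{\mathcal A}$, into which $\mathcal A$ embeds and in which $\exp_q(x)$ and each geometric series $\sum_{k\ge0}(-q^s(q-1)x)^k$ make sense via the homomorphism $F[[t]]\to\widehat{\mathcal A}$, $t\mapsto x$. Write $\sigma_j=1+q^j(q-1)x$, so that $\mathcal S_x$ is the multiplicative set generated by the pairwise commuting elements $\sigma_j$, $j\in\mathbb Z$. First I would record two preliminary facts. Regularity of $\mathcal S_x$ is immediate: each $\sigma_j$ is invertible in $\widehat{\mathcal A}$ with inverse the above series, so from $\mathcal A\subseteq\widehat{\mathcal A}$ any one-sided annihilator of $\sigma_j$ in $\mathcal A$ vanishes, and products of commuting regular elements are regular. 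The second fact, which lets me treat left and right fractions symmetrically, is that the two hypotheses are equivalent: comparing the two expansions in \eqref{eq:key-id-a} and using $e_m(q^\nu,\dots,q^{\nu+r})=q^{\nu m}q^{\binom m2}\qbinom{r+1}{m}$ one finds $(\ad_{(q^\nu,\dots,q^{\nu+r})}x)(y)=\lambda\,(\ad^*_{(q^{-\nu-r},\dots,q^{-\nu})}x)(y)$ for some nonzero $\lambda\in F^\times$ (the $k$-dependent powers of $q$ cancel in the coefficient ratio). Hence every $y\in\mathcal Y$ satisfies \emph{both} an $\ad$- and an $\ad^*$-vanishing identity, and I may invoke whichever is convenient.

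Next I would establish the Ore condition at the level of generators. Fix $y\in\mathcal Y$ with $(\ad_{(q^\nu,\dots,q^{\nu+r})}x)(y)=0$ and apply Corollary~\ref{cor:seq} to the power series $P=\sigma_s^{-1}\in F[[t]]$ with $q_0=q^\nu$, obtaining $\sigma_s^{-1}y=\sum_{j=0}^r(\ad_{(q^\nu,\dots,q^{\nu+j-1})}x)(y)\,(D^{(j)}_q\sigma_s^{-1})(q^\nu x)$. By Lemma~\ref{lem:q-leibn}\eqref{lem:q-leibn.b} applied to $f(t)=1+q^s(q-1)t$ one has $\prod_{i=0}^j f(q^i t)\,D^{(j)}_q(f^{-1})=Q_j(t)\in F[t]$, and since $f(q^{\nu+i}x)=\sigma_{s+\nu+i}$ this gives $(D^{(j)}_q\sigma_s^{-1})(q^\nu x)=Q_j(q^\nu x)\prod_{i=0}^j\sigma_{s+\nu+i}^{-1}$. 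Therefore $\sigma_s^{-1}y\in\mathcal A\,\mathcal S_x^{-1}$ with common right denominator $\prod_{i=0}^r\sigma_{s+\nu+i}\in\mathcal S_x$; clearing it yields a right Ore relation $y\,t=\sigma_s u$ for the pair $(y,\sigma_s)$. Running the mirror computation with the $\ad^*$-identity and the second formula of Corollary~\ref{cor:seq} gives $y\,\sigma_s^{-1}\in\mathcal S_x^{-1}\mathcal A$, i.e. the corresponding left Ore relation.

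From here I would promote both relations to all of $\mathcal A$. Viewing $\mathcal A\mathcal S_x^{-1}$ and $\mathcal S_x^{-1}\mathcal A$ as subsets of $\widehat{\mathcal A}$, the inclusion $\mathcal S_x^{-1}\mathcal A\subseteq\mathcal A\mathcal S_x^{-1}$ follows by induction on the length of a word in $\mathcal Y$: one moves a single $\sigma_s^{-1}$ past the leading generator by the computation above and past any power of $x$ for free (everything in $F[[x]]$ commutes), then iterates on the shorter tail; the inclusion $\mathcal A\mathcal S_x^{-1}\subseteq\mathcal S_x^{-1}\mathcal A$ is its mirror image. The two inclusions give $\mathcal A\mathcal S_x^{-1}=\mathcal S_x^{-1}\mathcal A$, a common set then automatically closed under multiplication; together with regularity this is exactly the statement that $\mathcal S_x$ is a two-sided Ore set, and standard Ore localization theory identifies $\mathcal A[\mathcal S_x^{-1}]$ with this subring $\mathcal A\mathcal S_x^{-1}\subseteq\widehat{\mathcal A}$.

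Finally, for the automorphism, I would note that $\Phi:=z\mapsto\exp_q(x)\,z\,\exp_q(x)^{-1}$ is an automorphism of $\widehat{\mathcal A}$ (since $\exp_q(x)$ is a unit there) fixing $x$, hence fixing every $\sigma_j$ and $\sigma_j^{-1}$. By Corollary~\ref{cor:act-conj}, for each generator $y$ both $\Phi(y)$ and $\Phi^{-1}(y)$ are an element of $\mathcal A$ times a factor $\Phi_m(q^cx,q)$ (for suitable $m\in\mathbb Z$ and $c$), and each such factor is a product or inverse product of the $\sigma_?$; thus $\Phi(y),\Phi^{-1}(y)\in\mathcal A[\mathcal S_x^{-1}]$. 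Since $\Phi$ is an algebra homomorphism fixing $\mathcal S_x^{-1}$ and $\mathcal A$ is generated by $\mathcal Y$, both $\Phi$ and $\Phi^{-1}$ preserve $\mathcal A[\mathcal S_x^{-1}]$, so $\Phi$ restricts to an automorphism. I expect the main obstacle to be the Ore condition, and within it the need to produce left \emph{and} right fractions from the single hypothesis available for each generator; the decisive point is the $\ad$/$\ad^*$ equivalence of the first paragraph, after which Lemma~\ref{lem:q-leibn}\eqref{lem:q-leibn.b} supplies the fractions explicitly and the passage from generators to all of $\mathcal A$ is routine bookkeeping.
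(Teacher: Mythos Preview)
Your proof is correct, and the overall architecture---work in $\widehat{\mathcal A}$, use Corollary~\ref{cor:seq} to move $P(x)^{\pm1}$ past generators, control the $q$-derivatives of inverses via Lemma~\ref{lem:q-leibn}\eqref{lem:q-leibn.b}, then invoke Corollary~\ref{cor:act-conj} for the conjugation---matches the paper's. The tactical difference is at the Ore step. The paper derives \emph{both} left and right Ore relations from a \emph{single} hypothesis (say the $\ad^*$-vanishing) by proving an auxiliary divisibility lemma: if $P\in F[t]$ then $P$ divides $D_q^{(s)}\bigl(\prod_{j=0}^r P(q^{-j}t)\bigr)$ for $0\le s\le r$; this lets one write $yP'=PY$ with $P'=\prod_{j=0}^r P(q^{-j-\nu}x)$, while the other direction uses Lemma~\ref{lem:q-leibn}\eqref{lem:q-leibn.b} as you do. Your route replaces that auxiliary lemma by the observation that $(\ad_{(q^\nu,\dots,q^{\nu+r})}x)(y)$ and $(\ad^*_{(q^{-\nu-r},\dots,q^{-\nu})}x)(y)$ differ by the nonzero scalar $(-1)^{r+1}q^{\nu(r+1)+\binom{r+1}{2}}$, so every generator in fact satisfies both hypotheses; you then run the same Lemma~\ref{lem:q-leibn}\eqref{lem:q-leibn.b} argument twice, once on each side. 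This is a genuine economy---one identity instead of a new divisibility lemma---and it also makes the promotion from $\mathcal Y$ to all of $\mathcal A$ more transparent, a step the paper leaves implicit. Two minor comments: the parenthetical about moving $\sigma_s^{-1}$ past powers of $x$ is not needed, since $x\in\mathcal A$ is already a word in $\mathcal Y$; and in the final paragraph, for $\Phi^{-1}(y)$ the expression from Corollary~\ref{cor:act-conj} is a \emph{sum} of terms $a_j\,\Phi_{-j-\nu}(q^{\nu+j}x,q)$ with $j$-dependent $\Phi$-factors rather than a single product, though each summand lies in $\mathcal A[\mathcal S_x^{-1}]$ so your conclusion stands.
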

\begin{proof}
The elements of~$\mathcal S_x$ are not zero divisors since they are invertible in the completion~$\widehat{\mathcal A}$.
Since the elements of~$\mathcal S_x$ commute, it is enough to prove that the Ore condition
holds for all $y\in\mathcal A$ and $s\in\mathcal S_x$.

We need the following simple 
\begin{lemma}\label{lem:tmp-1}
Let $P\in F[t]$. Then $P$ divides $D_q^{(s)}\big(\prod_{j=0}^r P(q^{-j} t)\big)$ for all $0\le s\le r$. 
\end{lemma}
\begin{proof}
Let $P_r=\prod_{j=0}^r P(q^{-j}t)$.
The argument is by induction on~$r$. For~$r=0$ there is nothing to do. For the inductive step, note that
by Lemma~\ref{lem:q-leibn}\eqref{lem:q-leibn.a}
$$
D_q^{(s)}P_{r+1}=\sum_{a=0}^s q^{-(r+1)(s-a)}(D_q^{(s-a)}P)(q^{a-r-1}t)D_q^{(a)} P_r.
$$
If~$s<r+1$ then  by the induction hypothesis $P$ divides $D_q^{(a)}P_r$, $0\le a\le s$. If~$s=r+1$,
the only term in the above sum to which the induction hypothesis does not apply is that with~$a=r+1=s$,
which equals $P(t)D_q^{(r+1)}P_r$.
\end{proof}

Suppose that~$y\in\mathcal A$ satisfies $(\ad_{(q^{\nu},q^{\nu+1},\dots,q^{\nu+r})}^*x)(y)=0$ for some~$q\in R^\times$ and
$r,\nu\ge0$ (the case of $(\ad_{(q^\nu,\dots,q^{\mu+r})}x)(y)=0$ is similar).
Let $P\in \mathcal S_x$ and set
$$
P'=\prod_{j=0}^r P(q^{-j-\nu}x)
$$
Clearly, $P'\in\mathcal S_x$. Then by Corollary~\ref{cor:seq}
$$
y P'=\sum_{j=0}^r q^{-j\nu} D_{q}^{(j)}\Big(\prod_{t=0}^{r} P(q^{-t}x)\Big)Y_j=PY,\qquad Y_j,Y\in \mathcal A,
$$
since, by the above Lemma, $P$ divides
$
D_{q}^{(j)}\Big(\prod_{t=0}^{k} P(q^{-t}x)\Big)
$
for all~$0\le j\le k$.

Since $P$ is invertible in~$\widehat{\mathcal A}$, we further have
$$
y P^{-1}=\sum_{j=0}^r (D^{(j)}_{q} P^{-1})(q^{\nu }x)Y_j.
$$
Let $P''=\prod_{s=0}^r P(q^{\nu+s}x)$.
By Lemma~\ref{lem:q-leibn}\eqref{lem:q-leibn.b}, $P'' (D^{(j)}_{q} P^{-1})$ is a polynomial in~$x$ for all~$0\le j\le k$.
Therefore, $P''y P^{-1}\in \mathcal A$. 

Thus, $\mathcal S_x$ is an Ore set in~$\mathcal A$. It remains to apply Corollary~\ref{cor:act-conj}.
\end{proof}

\subsection{}
We will now use the above identities to study the action of certain elements of the quantum Chevalley group on the Hall algebra.

Recall that an object~$E$ in an abelian category~$\cat A$ is called a {\em brick} if~$\End_{\cat A}E$ is 
a division ring. 
In that case, $\Ext^i_{\cat A}(E,M)$ is a right $\End_{\cat A}E$-vector space while $\Ext^i_{\cat A}(M,E)$ is 
a left $\End_{\cat A}E$-vector space for all~$i\ge 0$. Set
$$
\nu_E(M)=\dim \Hom_{\cat A}(E,M)_{\End_{\cat A}E}-
\dim_{\End_{\cat A}E}\Hom_{\cat A}(M,E).
$$
We say that a brick~$E$ is {\em exceptional} if $\Ext^1_{\cat A}(E,E)=0$.

\begin{lemma}\label{lem:brick1}
Let~$E$ be an exceptional brick in an abelian category~$\cat A$ and
let~$M$ be an object in~$\cat A$.
If $\Ext^1_{\cat A}(E,M)=0$
then for a non-split short exact sequence
$$
0\to E\to U\to M\to 0
$$
we have $\Hom_{\cat A}(U,E)\cong \Hom_{\cat A}(M,E)$, $\Ext^1_{\cat A}(E,U)=0$,
$\Ext^1_{\cat A}(U,E)\cong\Ext^1_{\cat A}(M,E)/\End_{\cat A}E$ and
$\Hom_{\cat A}(E,M)\cong\Hom_{\cat A}(E,U)/\End_{\cat A}E$. Similarly,
if~$\Ext^1_{\cat A}(M,E)=0$ then for a non-split short exact sequence 
$$
0\to M\to U\to E\to 0
$$
we have 
$\Hom_{\cat A}(E,U)\cong \Hom_{\cat A}(E,M)$, $\Ext^1_{\cat A}(U,E)=0$,
$\Ext^1_{\cat A}(E,U)\cong \Ext^1_{\cat A}(E,M)/\End_{\cat A}E$ and
$\Hom_{\cat A}(M,E)\cong\Hom_{\cat A}(U,E)/\End_{\cat A}E$.
\end{lemma}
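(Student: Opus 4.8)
The plan is to read off all four isomorphisms in each part from the long exact $\Ext$-sequences attached to the given short exact sequence, invoking the hypotheses $\Ext^1_{\cat A}(E,E)=0$ and the brick property only at a single decisive point.

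For the first part, let $\xi\in\Ext^1_{\cat A}(M,E)$ denote the class of the given non-split extension $0\to E\to U\to M\to 0$. Applying $\Hom_{\cat A}(-,E)$ and using $\Ext^1_{\cat A}(E,E)=0$ to truncate yields the exact sequence
$$0\to \Hom_{\cat A}(M,E)\to \Hom_{\cat A}(U,E)\xrightarrow{\ \delta\ } \End_{\cat A}E\xrightarrow{\ \partial\ } \Ext^1_{\cat A}(M,E)\to \Ext^1_{\cat A}(U,E)\to 0.$$
The crux is that $\partial$ sends $\phi\in\End_{\cat A}E$ to the pushout $\phi_*\xi$, that is, $\partial$ realizes the left $\End_{\cat A}E$-module structure on $\Ext^1_{\cat A}(M,E)$ via $\phi\mapsto\phi\cdot\xi$. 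Since the extension is non-split, $\xi\neq 0$; since $E$ is a brick, $\End_{\cat A}E$ is a division ring, so every nonzero $\phi$ is invertible and $\partial(\phi)=\phi\cdot\xi\neq 0$. Hence $\partial$ is injective. Injectivity forces $\delta=0$, giving the isomorphism $\Hom_{\cat A}(M,E)\cong\Hom_{\cat A}(U,E)$, and it identifies the cokernel of $\partial$, giving $\Ext^1_{\cat A}(U,E)\cong\Ext^1_{\cat A}(M,E)/\End_{\cat A}E$. This settles the first and third assertions. For the other two I apply $\Hom_{\cat A}(E,-)$; the vanishing $\Ext^1_{\cat A}(E,E)=0$ yields the short exact sequence $0\to\End_{\cat A}E\to\Hom_{\cat A}(E,U)\to\Hom_{\cat A}(E,M)\to 0$, which is the fourth assertion, and sandwiches $\Ext^1_{\cat A}(E,U)$ between $\Ext^1_{\cat A}(E,E)=0$ and $\Ext^1_{\cat A}(E,M)=0$, forcing $\Ext^1_{\cat A}(E,U)=0$ and proving the second assertion (here the hypothesis $\Ext^1_{\cat A}(E,M)=0$ enters).

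The second part is strictly dual. Writing $\xi\in\Ext^1_{\cat A}(E,M)$ for the class of $0\to M\to U\to E\to 0$, I apply $\Hom_{\cat A}(E,-)$ and $\Hom_{\cat A}(-,E)$ in the opposite roles: the decisive connecting homomorphism is now $\End_{\cat A}E\to\Ext^1_{\cat A}(E,M)$, $\phi\mapsto\phi^*\xi=\xi\cdot\phi$, which realizes the right $\End_{\cat A}E$-action and is injective by the same division-ring-plus-non-split argument. This gives $\Hom_{\cat A}(E,M)\cong\Hom_{\cat A}(E,U)$ and $\Ext^1_{\cat A}(E,U)\cong\Ext^1_{\cat A}(E,M)/\End_{\cat A}E$; applying $\Hom_{\cat A}(-,E)$, the vanishing $\Ext^1_{\cat A}(E,E)=0$ together with the hypothesis $\Ext^1_{\cat A}(M,E)=0$ produces $0\to\End_{\cat A}E\to\Hom_{\cat A}(U,E)\to\Hom_{\cat A}(M,E)\to 0$ and $\Ext^1_{\cat A}(U,E)=0$.

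The one genuinely non-formal ingredient, and therefore the step needing the most care, is the identification of the connecting map $\End_{\cat A}E\to\Ext^1$ with one-sided multiplication by the extension class, from which injectivity is immediate because $\End_{\cat A}E$ is a division ring and the class is nonzero. Everything else is a matter of chasing the truncated long exact sequences, so the write-up should make the functoriality of the connecting homomorphism, namely that it is $\End_{\cat A}E$-linear and sends $1_E$ to $\xi$, explicit enough to license the division-ring argument.
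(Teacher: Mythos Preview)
Your proof is correct and follows essentially the same route as the paper's: both apply $\Hom_{\cat A}(-,E)$ and $\Hom_{\cat A}(E,-)$ to the given short exact sequence and read off the conclusions from the resulting long exact sequences, using that $E$ is an exceptional brick at exactly one point. The only cosmetic difference is that the paper argues the restriction map $\Hom_{\cat A}(U,E)\to\End_{\cat A}E$ vanishes (otherwise it would be surjective onto the one-dimensional $\End_{\cat A}E$-space and produce a retraction, contradicting non-splitness), whereas you argue the equivalent statement that the connecting map $\End_{\cat A}E\to\Ext^1_{\cat A}(M,E)$ is injective via its identification with multiplication by~$\xi$.
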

\begin{proof}
We prove the first statement only, the argument for the second one being similar. Consider 
a non-split short exact sequence 
\begin{equation}\label{eq:tmp}
0\to E\xrightarrow{\iota} U\to M\to 0.
\end{equation}
Applying~$\Hom_{\cat A}(-,E)$ 
we obtain 
$$
0\to\Hom_{\cat A}(M,E)\to\Hom_{\cat A}(U,E)\to\End_{\cat A} E\to\Ext^1_{\cat A}(M,E)\to\Ext^1_{\cat A}(U,E)\to 0.
$$
We claim that the natural morphism $\Hom_{\cat A}(U,E)\to \End_{\cat A}E$, $f\mapsto f\circ\iota$ is identically zero. Otherwise, 
since it is a morphism of left $\End_{\cat A}E$-vector spaces and $\End_{\cat A}E$ is one dimensional as such, it is surjective and so
there exists $f\in\Hom_{\cat A}(U,E)$ such that $f\circ\iota=1_E$ hence~\eqref{eq:tmp} splits. Thus,
$\Hom_{\cat A}(U,E)\cong \Hom_{\cat A}(M,E)$ and we have a short exact sequence 
$$
0\to\End_{\cat A}E\to\Ext^1_{\cat A}(M,E)\to\Ext^1_{\cat A}(U,E)\to 0.
$$ 

On the other hand, applying $\Hom_{\cat A}(E,-)$ to~\eqref{eq:tmp} we obtain a long exact sequence
\begin{equation*}
0\to \End_{\cat A}E\to \Hom_{\cat A}(E,U)\to\Hom_{\cat A}(E,M)\to 0\to \Ext^1_{\cat A}(E,U)\to\Ext^1_{\cat A}(E,M)\to 0
\end{equation*}
which yields the remaining isomorphisms.
\end{proof}

\subsection{}
Let~$\cat A$ be a finitary abelian category. Let~$E$ be an exceptional brick in~$\cat A$ and 
denote $\kk_E=\End_{\cat A}E$ and $q_E=|\kk_E|$.
\begin{proposition}\label{prop:ore-set}
If $\Ext^1_{\cat A}(E,M)=0$, then 
$$
(\ad^*_{(q_E^{\nu_E(M)},\dots,q_E^{\nu_E(M)+r})}{[E]})({[M]})=0,
$$
where~$r=\dim_{\kk_E}\Ext^1_{\cat A}(M,E)$, and for any $P\in\mathbb Q[[x]]$ we have
in $\widehat H_{\cat A}$
$$
{[M]}P({[E]})=\sum_{j= 0}^r (D^{(j)}_{q_E}P)(q_E^{\nu_E(M)} {[E]})(\ad^*_{(q_E^{\nu_E(M)},\dots,q_E^{\nu_E(M)+j-1})}{[E]})({[M]})
$$
Similarly, if $\Ext^1_{\cat A}(M,E)=0$,
$$
(\ad_{(q_E^{-\nu_E(M)},\dots,q_E^{-\nu_E(M)+s})}{[E]})({[M]})=0,
$$
where $s=\dim_{\kk_E}\Ext^1_{\cat A}(E,M)$, and for any~$P\in\QQ[[x]]$ we have in~$\widehat H_{\cat A}$
$$
P({[E]}){[M]}=\sum_{j\ge 0} (\ad_{(q_E^{-\nu_E(M)},\dots,q_E^{-\nu_E(M)+j-1})}{[E]})({[M]}) (D^{(j)}_{q_E}P)(q_E^{-\nu_E(M)}{[E]}).
$$

\end{proposition}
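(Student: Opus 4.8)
The plan is to reduce the whole statement to the single algebraic fact that the iterated $q$-commutator vanishes, after which both explicit formulas are immediate applications of Corollary~\ref{cor:seq}. First I would note that $[E]$ is homogeneous of nonzero degree $|E|\in K_0(\cat A)$, and since the Jordan--H\"older length of $[E]^n$ grows linearly in $n$, only finitely many powers of $[E]$ lie in any fixed graded component; hence every $P([E])$ converges in $\widehat H_{\cat A}$ and $t\mapsto[E]$ extends to an algebra homomorphism $\mathbb Q[[t]]\to\widehat H_{\cat A}$. This is precisely the hypothesis of Corollary~\ref{cor:seq}, so once I establish $(\ad^*_{(q_E^{\nu_E(M)},\dots,q_E^{\nu_E(M)+r})}[E])([M])=0$ with $r=\dim_{\kk_E}\Ext^1_{\cat A}(M,E)$, applying that corollary with $x=[E]$, $y=[M]$, $q_0=q_E^{\nu_E(M)}$, $q=q_E$, $s=r$ yields the displayed expansion of $[M]P([E])$ verbatim.

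I would prove the vanishing by induction on $r$, ranging over all objects $M$ with $\Ext^1_{\cat A}(E,M)=0$. In the base case $r=0$ we also have $\Ext^1_{\cat A}(M,E)=0$, so every sequence $0\to E\to K\to M\to 0$ splits and $[M][E]=F_{ME}^{E\oplus M}[E\oplus M]$, while $\Ext^1_{\cat A}(E,M)=0$ gives $[E][M]=F_{EM}^{E\oplus M}[E\oplus M]$. Riedtmann's formula (Proposition~\ref{prop:Rie}) then gives $F_{ME}^{E\oplus M}/F_{EM}^{E\oplus M}=|\Hom_{\cat A}(E,M)|/|\Hom_{\cat A}(M,E)|=q_E^{\nu_E(M)}$, which is exactly $[M][E]=q_E^{\nu_E(M)}[E][M]$, i.e.\ $(\ad^*_{(q_E^{\nu_E(M)})}[E])([M])=0$.

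For the inductive step with $r\ge 1$, set $\nu=\nu_E(M)$ and consider $y_1=(\ad^*_{q_E^\nu}[E])([M])=[M][E]-q_E^\nu[E][M]$. Expanding $[M][E]$ over middle terms of $0\to E\to K\to M\to0$ and using $[E][M]=F_{EM}^{E\oplus M}[E\oplus M]$, the key point is that a \emph{non-split} extension $0\to E\to U\to M\to0$ cannot have $U\cong E\oplus M$: Lemma~\ref{lem:brick1} forces $\Hom_{\cat A}(U,E)\cong\Hom_{\cat A}(M,E)$, which rules out the extra endomorphism that $E\oplus M$ would contribute. Thus the split class is the unique extension with middle term $E\oplus M$, so $|\Ext^1_{\cat A}(M,E)_{E\oplus M}|=1$ and the base-case ratio computation again gives $F_{ME}^{E\oplus M}=q_E^\nu F_{EM}^{E\oplus M}$; hence the $[E\oplus M]$ terms cancel and $y_1=\sum_{[U]}F_{ME}^U[U]$ summed over isomorphism classes of non-split middle terms. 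For each such $U$, Lemma~\ref{lem:brick1} provides $\Ext^1_{\cat A}(E,U)=0$, $\dim_{\kk_E}\Ext^1_{\cat A}(U,E)=r-1$, and (from $\Hom_{\cat A}(E,U)/\End_{\cat A}E\cong\Hom_{\cat A}(E,M)$ together with $\Hom_{\cat A}(U,E)\cong\Hom_{\cat A}(M,E)$) $\nu_E(U)=\nu+1$. So the inductive hypothesis applies to each $U$, giving $(\ad^*_{(q_E^{\nu+1},\dots,q_E^{\nu+r})}[E])([U])=0$. Since the operators $\ad^*_{q_E^i}[E]$ commute, $(\ad^*_{(q_E^{\nu},\dots,q_E^{\nu+r})}[E])([M])=(\ad^*_{(q_E^{\nu+1},\dots,q_E^{\nu+r})}[E])(y_1)=0$, which closes the induction.

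The second assertion follows by the dual argument, replacing $\ad^*$ by $\ad$, using extensions $0\to M\to U\to E\to0$ and the second half of Lemma~\ref{lem:brick1}, and invoking the $\ad$-version of Corollary~\ref{cor:seq}. I expect the main obstacle to be the inductive step's structural claim, namely that $y_1$ is supported precisely on the ``one-step-up'' objects $U$ whose three homological invariants $(\Ext^1(E,U),\ \nu_E(U),\ \dim\Ext^1(U,E))$ shift by $(0,+1,-1)$; the exact cancellation of the $[E\oplus M]$ term is where both Riedtmann's formula and the exceptional-brick hypothesis (through Lemma~\ref{lem:brick1}) are genuinely used.
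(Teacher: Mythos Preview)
Your proposal is correct and follows essentially the same route as the paper's own proof: both compute $(\ad^*_{q_E^{\nu_E(U)}}[E])([U])$ for any $U$ with $\Ext^1_{\cat A}(E,U)=0$ via Riedtmann's formula, observe that only non-split middle terms survive and that Lemma~\ref{lem:brick1} shifts their invariants by $(0,+1,-1)$, then induct and finish with Corollary~\ref{cor:seq}. Your write-up is in fact slightly more explicit than the paper's at the one delicate point---you justify carefully that a non-split extension cannot have middle term $E\oplus M$ (via the $\Hom_{\cat A}(U,E)\cong\Hom_{\cat A}(M,E)$ clause of Lemma~\ref{lem:brick1}), whereas the paper tacitly uses $|\Ext^1_{\cat A}(U,E)_{U\oplus E}|=1$ without comment.
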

\begin{proof}
We prove only the first statement, the prove of the second one being similar.
We will need the following Lemma
\begin{lemma}
If~$\Ext^1_{\cat A}(E,M)=0$,
$$
(\ad^*_{(q_E^{\nu_E(M)},\dots,q_E^{\nu_E(M)+j-1})}{[E]})({[M]})=\sum_{[U]\in \Iso\cat A} c_{[U]}[U]
$$
where $c_{[U]}\not=0$ implies that 
\begin{equation}\label{eq:cond-1}
\dim_{\kk_E}\Ext^1_{\cat A}(U,E)=\dim_{\kk_E}\Ext^1(M,E)-j,\qquad \nu_E(U)=\nu_E(M)+j.
\end{equation}
In particular, 
$$
(\ad^*_{(q_E^{\nu_E(M)},\dots,q_E^{\nu_E(M)+j})}{[E]})({[M]})=0,\qquad j\ge\dim_{\kk_E}\Ext^1(M,E).
$$
Similarly, if~$\Ext^1_{\cat A}(M,E)=0$,
$$
(\ad_{(q_E^{-\nu_E(M)},\dots,q_E^{-\nu_E(M)+j-1})}{[E]})({[M]})=\sum_{[U']\in\Iso\cat A} c_{[U']} {[U']}
$$
where $c_{[U']}=0$ unless
\begin{equation}\label{eq:cond-1a}
\dim_{\kk_E}\Ext^1_{\cat A}(E,U')=\dim_{\kk_E}\Ext^1(E,U')-j,\qquad \nu_E(U')=\nu_E(M)-j.
\end{equation}
In particular, 
$$
(\ad_{(q_E^{-\nu_E(M)},\dots,q_E^{-\nu_E(M)+j})}{[E]})({[M]})=0,\qquad j\ge\dim_{\kk_E}\Ext^1(E,M).
$$
\end{lemma}
\begin{proof}
Consider~$U$ such that $\Ext^1_{\cat A}(E,U)=0$. We have
by~\eqref{eq:Rie}
$$
{[E]}{[U]}=\frac{|\Aut_{\cat A}(U\oplus E)|}
{|\Hom_{\cat A}(E,U)||\Aut_{\cat A}U||\kk_E^\times|}\,{[U\oplus E]}.
$$
On the other hand, by Lemma~\ref{lem:brick1} and~\eqref{eq:Rie}
$$
{[U]}{[E]}=\frac{|\Aut_{\cat A}(U\oplus E)|}
{|\Hom_{\cat A}(U,E)||\Aut_{\cat A}U||\kk_E^\times|}\,{[U\oplus E]}+
\sum_{[U']\in\Iso\cat A} c_{[U']}[U']=q_E^{\nu_E(U)} {[E]}{[U]}+\sum_{[U']\in\Iso\cat A} c_{[U']}[U']
$$
where $c_{[U']}\not=0$ implies that we have a non-split short exact sequence
$$
0\to E\to U'\to U\to 0.
$$
By Lemma~\ref{lem:brick1}, 
$\nu_E(U')=\nu_E(U)+1$ and~$\dim_{\kk_E}\Ext^1_{\cat A}(U',E)=\dim_{\kk_E}\Ext^1_{\cat A}(U,E)-1$.
An obvious induction now completes the proof of the Lemma.
\end{proof}

To complete the proof of the proposition, it remains to apply the Lemma, together with Corollary~\ref{cor:seq} 
with~$x={[E]}$, $y={[M]}$, $q_0=q_E^{\nu_E(M)}$, $q=q_E$ and~$r=\dim_{\kk_E}\Ext^1_{\cat A}(M,E)$ (respectively,
$s=\dim_{\kk_E}\Ext^1_{\cat A}(E,M)$).
\end{proof}
Thus, if $E$ is an exceptional brick and $\Ext^1_{\cat A}(E,M)=0$
\begin{equation}\label{eq:fund-rel}
\sum_{j=0}^{r+1} (-1)^j q_E^{\binom{j}2+j\nu_E(M)}\qbinom[q_E]{r+1}{j} {[E]}^j {[M]}{[E]}^{r+1-j}=0,\qquad r=\dim_{\kk_E}\Ext^1_{\cat A}(M,E)
\end{equation}
while if $\Ext^1_{\cat A}(M,E)=0$,
\begin{equation}\label{eq:fund-rel'}
\sum_{j=0}^{r+1} (-1)^j q_E^{\binom{j}2-j\nu_E(M)}\qbinom[q_E]{r+1}{j} {[E]}^{r+1-j} {[M]}{[E]}^{j}=0,\qquad r=\dim_{\kk_E}\Ext^1_{\cat A}(E,M).
\end{equation}
In the special case when $(E,E')$ is an orthogonal exceptional pair of bricks (that is, $
\Hom_{\cat A}(E,E')=\Hom_{\cat A}(E',E)=\Ext^1_{\cat A}(E,E')=0$), we obtain the so-called fundamental relations in the 
Hall algebra of the
subcategory $\cat A(E,E')$ of~$\cat A$ which is defined to be the smallest full subcategory of~$\cat A$
containing $E,E'$ and closed under extensions. In particular, this yields Serre relations in~$C_{\cat A}$.
Namely, $S$ and~$S'$ are non-isomorphic simples with $\Ext^1_{\cat A}(S',S)=0$, $\dim_{\kk_S}\Ext^1_{\cat A}(S,S')=r$ and
$\dim_{\kk_{S'}}\Ext^1_{\cat A}(S,S')=r'$ then
\begin{equation}\label{eq:Serre}
\sum_{j=0}^{r'+1} (-1)^j q_{S'}^{\binom{j}2}\qbinom[q_{S'}]{r'+1}{j} {[S']}^j {[S]}{[S']}^{r'+1-j}=0=
\sum_{j=0}^{r+1} (-1)^j q_S^{\binom{j}2}\qbinom[q_S]{r+1}{j} {[S]}^{r+1-j} {[S']}{[S]}^{j}
\end{equation}

Propositions~\ref{prop:abs-Ore} and~\ref{prop:ore-set}
immediately yield 
\begin{theorem}\label{prop:auto}
Let $E$ be an exceptional brick and let $\mathcal S_E$ be the minimal multiplicatively closed 
subset of~$H_{\cat A}$ containing~$1+(q_E-1)q_E^j{[E]}$ for all $j\in\mathbb Z$. 
Then
$\mathcal S_E$ is an Ore set in~$H_{\add_{\cat A}(E)^{\lort}}$ and in $H_{\add_{\cat A}(E)^{\rort}}$
and the assignment $[M]\mapsto \mathcal \Exp_{\cat A}([E])[M]\Exp_{\cat A}([E])^{-1}$
extends to an automorphism of $H_{\add_{\cat A}(E)^{\lort}}[\mathcal S_E^{-1}]$ and $H_{\add_{\cat A}(E)^{\rort}}[\mathcal S_E^{-1}]$.
In particular, if $E$ is a projective or an injective indecomposable, this assignment defines an automorphism of
$H_{\cat A}[\mathcal S_E^{-1}]$.
\end{theorem}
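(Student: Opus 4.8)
The plan is to deduce both assertions directly from the abstract Ore-set criterion of Proposition~\ref{prop:abs-Ore}, applied in turn to the two algebras $\mathcal A=H_{\add_{\cat A}(E)^{\lort}}$ and $\mathcal A=H_{\add_{\cat A}(E)^{\rort}}$, in each case with $x=[E]$ and $q=q_E$ and taking for the generating set $\mathcal Y$ the standard Hall basis $\{[M]\}$ of the subalgebra in question. First I would check that $[E]$ really lies in both subalgebras: since $E$ is an exceptional brick it is indecomposable with $\Ext^1_{\cat A}(E,E)=0$, so $\add_{\cat A}(E)$ consists exactly of the direct sums of copies of $E$; consequently $\Ext^1_{\cat A}(E^{\oplus n},E)=0=\Ext^1_{\cat A}(E,E^{\oplus n})$ for all $n$, which says that $E$ is an object of both $\add_{\cat A}(E)^{\rort}$ and $\add_{\cat A}(E)^{\lort}$. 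Thus $[E]$, and hence the whole set $\mathcal S_E$, lies in $H_{\add_{\cat A}(E)^{\lort}}\cap H_{\add_{\cat A}(E)^{\rort}}$, as was already recorded before the statement.

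The core of the argument is to feed Proposition~\ref{prop:ore-set} into Proposition~\ref{prop:abs-Ore}, matching $\ad^*$ with $\rort$ and $\ad$ with $\lort$. Every object $M$ of $\add_{\cat A}(E)^{\rort}$ satisfies $\Ext^1_{\cat A}(E,M)=0$, so the first half of Proposition~\ref{prop:ore-set} gives $(\ad^*_{(q_E^{\nu_E(M)},\dots,q_E^{\nu_E(M)+r})}[E])([M])=0$ with $r=\dim_{\kk_E}\Ext^1_{\cat A}(M,E)$; this is precisely the $\ad^*$-alternative in the hypothesis of Proposition~\ref{prop:abs-Ore}, with $\nu=\nu_E(M)$. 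Dually, every object $M$ of $\add_{\cat A}(E)^{\lort}$ satisfies $\Ext^1_{\cat A}(M,E)=0$, so the second half of Proposition~\ref{prop:ore-set} supplies the $\ad$-alternative, with $\nu=-\nu_E(M)$ and $r=\dim_{\kk_E}\Ext^1_{\cat A}(E,M)$. I would also verify the remaining hypothesis that $t\mapsto[E]$ extends to an algebra homomorphism $\mathbb Q[[t]]\to\widehat{\mathcal A}$: this holds because $[E]^n$ is homogeneous of degree $n|E|$ and these degrees are pairwise distinct, so at most one power of $[E]$ contributes to each graded component of the completion and every formal series in $[E]$ converges there.

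With all hypotheses checked, Proposition~\ref{prop:abs-Ore} applies. Its set $\mathcal S_x$, the minimal multiplicatively closed set containing the $1+q_E^s(q_E-1)[E]$, is exactly the $\mathcal S_E$ of the statement, so $\mathcal S_E$ is an Ore set in each subalgebra and $z\mapsto\exp_{q_E}([E])\,z\,\exp_{q_E}([E])^{-1}$ is an automorphism of the corresponding localization. By Lemma~\ref{le:brick} we have $\exp_{q_E}([E])=\Exp_{\cat A}([E])$, which identifies this with the claimed conjugation automorphism of $H_{\add_{\cat A}(E)^{\lort}}[\mathcal S_E^{-1}]$ and of $H_{\add_{\cat A}(E)^{\rort}}[\mathcal S_E^{-1}]$.

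For the final clause I would simply observe that injectivity or projectivity of $E$ collapses the relevant orthogonal category onto all of $\cat A$. If $E$ is injective then $\Ext^1_{\cat A}(M,E)=0$ for every object $M$, so the defining condition of $\add_{\cat A}(E)^{\lort}$ holds throughout $\cat A$ and $\add_{\cat A}(E)^{\lort}=\cat A$; dually, if $E$ is projective then $\add_{\cat A}(E)^{\rort}=\cat A$. In either case the automorphism just produced is an automorphism of $H_{\cat A}[\mathcal S_E^{-1}]$. I expect the only delicate points to be the bookkeeping of which orthogonality hypothesis supplies which of the $\ad$/$\ad^*$ alternatives and confirming that $[E]$ and $\mathcal S_E$ sit inside the subalgebras; the substantive content is already isolated in Propositions~\ref{prop:abs-Ore} and~\ref{prop:ore-set}.
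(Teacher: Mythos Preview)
Your proposal is correct and follows exactly the route the paper takes: the paper's entire proof is the sentence ``Propositions~\ref{prop:abs-Ore} and~\ref{prop:ore-set} immediately yield'' the theorem, and you have simply unpacked this, verifying the hypotheses of Proposition~\ref{prop:abs-Ore} using Proposition~\ref{prop:ore-set} and handling the final clause via the obvious observation about injectives and projectives. Your bookkeeping of which orthogonality condition supplies which $\ad$/$\ad^*$ alternative is correct.
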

Therefore, Theorem~\ref{th:local conjugation} is proved.

\section{Preprojective and preinjective factorization}

\subsection{}
Let $\cat A$ be a hereditary finitary category with the finite length property and with a projective generator and 
an injective co-generator. We may assume, up to equivalence, that $\cat A$ is the category $\Mod\Lambda$ of finitely generated left
modules over a hereditary Artin $\kk$-algebra~$\Lambda$.
We now gather the properties of preprojective and preinjective objects in the category $\cat A$ that will be needed later.

Let $\tau^+:
\cat A\to\cat A$ be
the Auslander-Reiten translation and $\tau^-$ 
be its left adjoint functor.
By~\cite{ARS}*{Corollary~IV.4.7}, for any objects $X,Y\in\Ob\cat A$ we have
\begin{equation}\label{eq:Ext-Hom}
\dim_{\kk}\Ext^1_{\cat A}(X,Y)=\dim_{\kk}\Hom_{\cat A}(Y,\tau^+ X).
\end{equation}

An indecomposable~$M\in\Ob\cat A$
is called {\em preprojective} (respectively, {\em preinjective}) if for some non-negative integer~$n$, $(\tau^+)^n M$ is a non-zero projective
(respectively, $(\tau^-)^n M$ is a non-zero injective) object. 
Let $\widetilde{\cat P}(\cat A)$ (respectively, $\widetilde{\cat I}(\cat A)$)
be the set of isomorphism classes of indecomposable preprojective (respectively, preinjective) objects.
We say that $M\in\Ob\cat A$ is preprojective (respectively, preinjective) if all its indecomposable summands are 
preprojective (respectively, preinjective), or, equivalently, 
if $(\tau^+)^n M=0$ (respectively, $(\tau^-)^n M=0$) for some~$n\ge 0$.
For an indecomposable preprojective $P$ (respectively, preinjective $I$) we denote by
$\delta^+(P)$ (respectively, $\delta^-(I)$) the non-negative integer, easily seen to be unique, such that $(\tau^+)^{\delta_+(P)} P$ is 
projective (respectively, $(\tau^-)^{\delta_-(I)} I$ is injective). If an indecomposable~$X$ is not preprojective (respectively, preinjective)
we set $\delta^+(X)=\infty$ (respectively, $\delta^-(X)=\infty$). Clearly we can regard $\delta^\pm$ as functions 
$\delta^\pm:\Ind\cat A\to \mathbb Z_{\ge 0}\cup\{\infty\}$.
We will need the following properties of preprojectives and preinjectives (the details can be found in \cite{ARS}*{\S VIII.1--2})
\begin{proposition}\label{prop:preinj-preproj}
\begin{enumerate}[{\rm(a)}]
 \item\label{prop:preinj-preproj.a} For any $n\ge 0$, the sets $\{\alpha\in\Ind\cat A\,:\,\delta^+(\alpha)\le n\}$ and $\{\alpha\in\Ind\cat A\,:\,\delta^-(\alpha)\le n\}$
are finite. In particular, $\widetilde{\cat P}(\cat A)$ and $\widetilde{\cat I}(\cat A)$ are countable.

Let $X$, $P$, $I$ be indecomposable, with 
$[P]\in\widetilde{\cat P}(\cat A)$ and~$[I]\in\widetilde{\cat I}(\cat A)$. Then
\item\label{prop:preinj-preproj.b} $\Hom_{\cat A}(X,P)\not=0\implies\delta^+(X)\le \delta^+(P)$. 
\item\label{prop:preinj-preproj.c} $\Ext_{\cat A}^1(P,X)\not=0\implies\delta^+(X)\le \delta^+(P)-1$.
\item\label{prop:preinj-preproj.d} $\Hom_{\cat A}(I,X)\not=0\implies \delta^-(X)\le \delta^-(I)$. 
\item\label{prop:preinj-preproj.e} $\Ext_{\cat A}^1(X,I)\not=0\implies\delta^-(X)\le \delta^-(I)-1$.
\item\label{prop:preinj-preproj.f} $P$ and $I$ are exceptional bricks.
\item\label{prop:preinj-preproj.g} If $|X|=|P|$ (respectively, $|X|=|I|$)
then $X\cong P$ (respectively, $X\cong I$).
\end{enumerate}
\end{proposition}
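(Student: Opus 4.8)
The plan is to treat the seven assertions as a single package resting on two inputs: the Auslander--Reiten formula \eqref{eq:Ext-Hom} together with its dual $\dim_\kk\Ext^1_{\cat A}(X,Y)=\dim_\kk\Hom_{\cat A}(\tau^- Y,X)$, and the fact that $\tau^+,\tau^-$ restrict to mutually inverse bijections between the indecomposable non-projectives and the indecomposable non-injectives. Equating the two expressions for $\dim_\kk\Ext^1$ and substituting $Y\mapsto\tau^+X$ yields the translation-invariance $(\star)$: $\dim_\kk\Hom_{\cat A}(X,P)=\dim_\kk\Hom_{\cat A}(\tau^+X,\tau^+P)$ for every indecomposable non-projective $X$ and every $P$. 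I would also record the involutive $\kk$-duality $D=\Hom_\kk(-,\kk)\colon\cat A\to\Mod\Lambda^{\mathrm{op}}$, which interchanges $\tau^+\leftrightarrow\tau^-$ and preprojectives $\leftrightarrow$ preinjectives, reverses $\Hom$, and preserves $\Ext^1$; since $\Lambda^{\mathrm{op}}$ is again hereditary, $D$ reduces (d),(e) to (b),(c) and the preinjective halves of (a),(f),(g) to the preprojective halves. Finally, hereditarity forces the $\Ext$-quiver to be acyclic (an oriented cycle would give a simple of infinite projective dimension), so $\End_{\cat A}(P_i)$ is a division ring and $\Hom_{\cat A}(P_a,P_b)\neq0\neq\Hom_{\cat A}(P_b,P_a)$ forces $P_a\cong P_b$ for indecomposable projectives.

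Part (a) is then immediate: there are finitely many indecomposable projectives ($\Lambda$ is Artin), and $\{\alpha:\delta^+(\alpha)\le n\}$ is contained in the finite set $\{[\tau^{-k}P_i]\}_{0\le k\le n,\ i}$, so $\widetilde{\cat P}(\cat A)=\bigcup_n\{\delta^+\le n\}$ is countable. Part (c) follows from (b): by \eqref{eq:Ext-Hom}, $\dim_\kk\Ext^1_{\cat A}(P,X)=\dim_\kk\Hom_{\cat A}(X,\tau^+P)$; this vanishes if $P$ is projective, and otherwise $\tau^+P$ is indecomposable preprojective with $\delta^+(\tau^+P)=\delta^+(P)-1$, so a nonzero $\Ext^1_{\cat A}(P,X)$ gives $\Hom_{\cat A}(X,\tau^+P)\neq0$, and (b) yields $\delta^+(X)\le\delta^+(P)-1$.

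For (b) I would induct on $\delta^+(P)$. If $P$ is projective and $f\colon X\to P$ is nonzero, then $\Im f$ is a submodule of a projective, hence projective (hereditarity), so the surjection $X\twoheadrightarrow\Im f$ splits and indecomposability of $X$ forces $X\cong\Im f$ to be projective, i.e.\ $\delta^+(X)=0$. If $\delta^+(P)=n\ge1$ and $X$ is projective we are done; otherwise $X$ is non-projective and $(\star)$ turns $\Hom_{\cat A}(X,P)\neq0$ into $\Hom_{\cat A}(\tau^+X,\tau^+P)\neq0$, where $\tau^+X$ is indecomposable and $\delta^+(\tau^+P)=n-1$. The induction hypothesis gives $\delta^+(\tau^+X)\le n-1$, and applying $\tau^-$ returns $\delta^+(X)=\delta^+(\tau^+X)+1\le n$.

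The remaining parts rest on a single ``no cycle'' lemma: if $P,P'$ are preprojective indecomposables with $\Hom_{\cat A}(P,P')\neq0\neq\Hom_{\cat A}(P',P)$, then $P\cong P'$. Indeed (b) forces $\delta^+(P)=\delta^+(P')=:m$; iterating $(\star)$ $m$ times (legitimate, since the left entries stay non-projective) gives nonzero maps both ways between the indecomposable projectives $\tau^{+m}P,\tau^{+m}P'$, and acyclicity forces $\tau^{+m}P\cong\tau^{+m}P'$, whence $P\cong P'$ on applying $\tau^{-m}$. For (f): \eqref{eq:Ext-Hom} gives $\dim_\kk\Ext^1_{\cat A}(P,P)=\dim_\kk\Hom_{\cat A}(P,\tau^+P)$, which vanishes because $\tau^+P$ is preprojective of strictly smaller level and (b) forbids a nonzero map $P\to\tau^+P$; thus $P$ is exceptional. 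If $f\in\End_{\cat A}(P)$ were nonzero but not an isomorphism, each indecomposable summand of $\Im f$ would map both to and from $P$, hence be $\cong P$ by the lemma, and comparing $\kk$-dimensions (as $\Im f\subseteq P$) forces $\Im f=P$ and $f$ an isomorphism, so $P$ is a brick. For (g), $|X|=|P|$ and the Euler form (well-defined on $K_0(\cat A)$ by hereditarity) give $\dim_\kk\Hom_{\cat A}(X,P)-\dim_\kk\Ext^1_{\cat A}(X,P)=\dim_\kk\End_{\cat A}(P)>0$, and symmetrically for $\Hom_{\cat A}(P,X)$, so both Hom spaces are nonzero; (b) then shows $X$ is preprojective with $\delta^+(X)=\delta^+(P)$, and the lemma gives $X\cong P$. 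I expect the main obstacle to be pinning down $(\star)$ at the level actually needed, namely that ordinary (not merely stable) $\Hom$ is preserved by $\tau^+$ over the preprojective range, equivalently that preprojective indecomposables are directing so the stable and costable correction terms vanish; once $(\star)$ is secured, the rest is bookkeeping over the Auslander--Reiten quiver.
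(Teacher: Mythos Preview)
Your argument is correct, and in fact more detailed than what the paper offers: the paper does not prove Proposition~\ref{prop:preinj-preproj} at all but simply cites \cite{ARS}*{\S VIII.1--2}. What you have written is essentially the standard textbook derivation, organized around the translation identity~$(\star)$ and the ``no cycle'' lemma; the latter is precisely the content of the paper's Lemma~\ref{lem:assym}, which the authors prove separately and by the same reduction to indecomposable projectives via $\tau^-$.

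Your closing worry about $(\star)$ is unfounded in this setting and you should not flag it as an obstacle. The paper's \eqref{eq:Ext-Hom} is stated with ordinary (not stable) $\Hom$, and the dual formula $\dim_\kk\Ext^1_{\cat A}(X,Y)=\dim_\kk\Hom_{\cat A}(\tau^-Y,X)$ follows from it by applying $D$ and working in $\Mod\Lambda^{\mathrm{op}}$, exactly as you outline. Equating the two and substituting gives $(\star)$ on the nose for any indecomposable non-projective $X$ and arbitrary $P$; no passage through stable categories or directing arguments is needed. (When $P$ is projective the right-hand side of $(\star)$ vanishes since $\tau^+P=0$, and the left-hand side vanishes too because $\Im(X\to P)$ is projective and would split off $X$.) So the ``bookkeeping'' you anticipate is all there is.

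One cosmetic point: your justification of acyclicity of $\mathcal E_{\cat A}$ (``an oriented cycle would give a simple of infinite projective dimension'') is not quite the right mechanism; the cleaner argument is that a cycle in the $\Ext$-quiver forces an indecomposable projective to contain itself properly in its radical series, contradicting the Artin hypothesis. This does not affect the validity of the proof.
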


\begin{lemma}\label{lem:assym}
Suppose that $E$, $E'$ are non-isomorphic preprojective or preinjective indecomposables in~$\cat A$. Then $\Hom_{\cat A}(E,E')\not=0$ implies that~$\Hom_{\cat A}(E',E)=0$ and $\Ext^1_{\cat A}(E',E)\not=0$ implies $\Ext^1_{\cat A}(E,E')=0$.
\end{lemma}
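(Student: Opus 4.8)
The plan is to reduce everything to the numerical invariants $\delta^+,\delta^-$ supplied by Proposition~\ref{prop:preinj-preproj}, to treat the preprojective and preinjective cases dually, and to dispose of the mixed case (one object preprojective, the other preinjective) by an automatic‑vanishing argument. Throughout I use that in the infinite type case a preprojective indecomposable $X$ has $\delta^-(X)=\infty$ and a preinjective one has $\delta^+(X)=\infty$, while in finite type every indecomposable is simultaneously preprojective and preinjective, so the same‑type arguments apply to all pairs.

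I would settle the $\Ext^1$ statement first, since it is immediate from the \emph{strict} inequalities. If $E,E'$ are both preprojective and $\Ext^1_{\cat A}(E',E)\neq 0$, then Proposition~\ref{prop:preinj-preproj}\eqref{prop:preinj-preproj.c} (with $P=E'$, $X=E$) gives $\delta^+(E)\le\delta^+(E')-1$; were $\Ext^1_{\cat A}(E,E')\neq 0$ as well, the same part would give $\delta^+(E')\le\delta^+(E)-1$, which is absurd. The preinjective case is identical using part~\eqref{prop:preinj-preproj.e} and $\delta^-$. In the mixed case, the $\Ext^1$-group whose first argument is preprojective and whose second argument is preinjective vanishes unconditionally: by~\eqref{eq:Ext-Hom} it equals $\dim_{\kk}\Hom_{\cat A}(\,\cdot\,,\tau^+(\,\cdot\,))$ of a preinjective object into a preprojective object $\tau^+(\,\cdot\,)$, which is $0$ by part~\eqref{prop:preinj-preproj.b} because the preinjective object has $\delta^+=\infty$. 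According to which of $E,E'$ is preprojective, this is either the conclusion of the implication or the negation of its hypothesis, so the implication holds.

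For the $\Hom$ statement, assume $E,E'$ are both preprojective with $\Hom_{\cat A}(E,E')\neq 0$; part~\eqref{prop:preinj-preproj.b} gives $\delta^+(E)\le\delta^+(E')$. If this is strict, then $\Hom_{\cat A}(E',E)\neq 0$ would force the opposite inequality, a contradiction, so $\Hom_{\cat A}(E',E)=0$. The remaining case $\delta^+(E)=\delta^+(E')=:d$ is the crux, and I would argue by induction on $d$. When $d=0$ both $E,E'$ are indecomposable projectives; writing $E=\Lambda e_i$, $E'=\Lambda e_j$ one has $\Hom_{\cat A}(E,E')\cong e_i\Lambda e_j$ and $\Hom_{\cat A}(E',E)\cong e_j\Lambda e_i$, and both being nonzero would produce an oriented cycle $i\to j\to i$ in the Gabriel quiver of the hereditary algebra $\Lambda$, contradicting its acyclicity. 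For $d\ge 1$ both $E,E'$ are non‑projective, so $\tau^+E,\tau^+E'$ are non‑isomorphic preprojective indecomposables with $\delta^+=d-1$ (non‑isomorphism from $\tau^-\tau^+\cong\mathrm{id}$ on non‑projectives). The key point is that $\tau^+$ induces an isomorphism $\Hom_{\cat A}(E,E')\cong\Hom_{\cat A}(\tau^+E,\tau^+E')$: the stable equivalence induced by $\tau^+$ (\cite{ARS}) gives $\underline{\Hom}_{\cat A}(E,E')\cong\overline{\Hom}_{\cat A}(\tau^+E,\tau^+E')$, and here no morphism $E\to E'$ factors through a projective (a factor $\Hom_{\cat A}(E,P)$ with $P$ projective vanishes by part~\eqref{prop:preinj-preproj.b}, as $\delta^+(E)\ge 1>0$), while no morphism $\tau^+E\to\tau^+E'$ factors through an injective (symmetrically, by part~\eqref{prop:preinj-preproj.d}, since $\tau^+E'$ is preprojective, hence non‑injective in infinite type). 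Thus both $\Hom$'s between $\tau^+E$ and $\tau^+E'$ would be nonzero, contradicting the induction hypothesis. The preinjective $\Hom$ statement is dual, using parts~\eqref{prop:preinj-preproj.d},\eqref{prop:preinj-preproj.e}, the functor $\tau^-$, injective indecomposables and the same acyclicity; and the mixed $\Hom$ case is automatic, since the $\Hom$-space from a preinjective object to a preprojective object vanishes by part~\eqref{prop:preinj-preproj.d} (the preprojective object has $\delta^-=\infty$), which settles both mixed implications.

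The main obstacle is exactly the equal‑level case $\delta^+(E)=\delta^+(E')$ of the $\Hom$ statement: the inequalities of Proposition~\ref{prop:preinj-preproj} are not strict there, and one genuinely needs the directedness of the preprojective component. I resolve it by transporting the problem along $\tau^+$ down to the indecomposable projectives, where directedness is simply the acyclicity of the Gabriel quiver of a finite‑dimensional hereditary algebra; the only nontrivial input is the $\tau^+$‑invariance of $\Hom$ between non‑projective preprojectives, which I reduce to the standard stable equivalence of $\tau^+$ together with the vanishing of factor‑through‑projective and factor‑through‑injective morphisms, both controlled by Proposition~\ref{prop:preinj-preproj}. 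In finite type the argument's non‑injectivity step is unavailable, but there the preprojective and preinjective components coincide with the whole (finite, directed) Auslander--Reiten quiver, and directedness — equivalently the $d=0$ acyclicity argument applied along any $\tau$‑orbit — yields the conclusion directly.
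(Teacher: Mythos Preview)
Your overall architecture matches the paper's: use the $\delta^\pm$ inequalities of Proposition~\ref{prop:preinj-preproj} to dispose of unequal levels, and at equal level transport the problem along the Auslander--Reiten translation down to indecomposable projectives (respectively injectives). The $\Ext^1$ claim is indeed immediate from the strict inequalities in parts~\eqref{prop:preinj-preproj.c},\eqref{prop:preinj-preproj.e}; the paper does not even spell this out. The paper also does not treat a mixed preprojective/preinjective case---the statement is intended for $E,E'$ of the same kind---so that portion of your argument, while harmless, is superfluous.

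Where your execution diverges is in the equal-level reduction and the projective base case, and in both places the paper's route is shorter. Instead of your induction via the stable equivalence $\underline{\Hom}(E,E')\cong\overline{\Hom}(\tau^+E,\tau^+E')$ together with two separate factor-through vanishing arguments (the second of which forces you into a finite-type workaround), the paper writes $E=(\tau^-)^rP$, $E'=(\tau^-)^rP'$ and invokes in one stroke that $\tau^-$ is \emph{fully faithful} on the subcategory of modules without injective summands (\cite{ARS}*{Lemma~VIII.1.1}); this works uniformly in finite and infinite type. For the base case, rather than appealing to acyclicity of the Gabriel quiver, the paper uses \cite{ARS}*{Lemma~II.1.12}: a nonzero morphism between indecomposable projectives over a hereditary algebra is an embedding, so mutual nonzero morphisms force $P\cong P'$ by a length count. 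Both of your arguments are valid, but the paper's choices avoid the stable/costable bookkeeping and the case split on representation type.
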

\begin{proof}
We prove the statement for preprojectives only, since the duality functor implies the dual statement for preinjectives.
If $\Hom_{\cat A}(E,E')\not=0$ then $\delta^+(E)\le \delta^+(E')$ by Proposition~\ref{prop:preinj-preproj} and
so the strict inequality immediately yields $\Hom_{\cat A}(E',E)=0$. 
Assume that~$\delta^+(E)=
\delta^+(E')=r$. Thus, $E=(\tau^-)^r P$ and $E'=(\tau^-)^r P'$ where $P,P'$ are projective and indecomposable.
Since~$E\not\cong E'$, $P\not\cong P'$ by \cite{ARS}*{Proposition~VIII.1.3}.
Since the functor $\tau^-$ is fully faithful on the subcategory of modules without injective summands
(\cite{ARS}*{Lemma~VIII.1.1}), 
it is thus enough to prove that $\Hom_{\cat A}(P,P')\not=0$ implies that $\Hom_{\cat A}(P',P)=0$.
By \cite{ARS}*{Lemma II.1.12}, $\Hom_{\cat A}(P,P')\not=0$ implies that $P$ is isomorphic to 
a submodule of~$P'$ and similarly $\Hom_{\cat A}(P',P)\not=0$ implies that~$P'$ is isomorphic to
a submodule of~$P$. Since all involved modules are finite dimensional over~$\kk$, we conclude that
$P\cong P'$.
\end{proof}

\subsection{}
Let $E$ be an indecomposable preprojective object in~$\cat A$.
Let $\Ind_E^<\cat A$ (respectively, $\Ind_{E}^0\cat A$, $\Ind_E^>\cat A$) be
the subset of~$\Ind\cat A$ consisting of $\alpha$ with $\delta^+(\alpha)<\delta^+(E)$
(respectively, $\delta^+(\alpha)=\delta^+(E)$, $\delta^+(\alpha)>\delta^+(E)$). 
Set $\cat A^{>}_E=\add_{\cat A}\Ind_E^>\cat A$, $\cat A^{0}_E=\add_{\cat A}\Ind^{0}_E\cat A$
and $\cat A^{<}_E=\add_{\cat A}\Ind_E^<\cat A$ and let~$\cat A^{\ge 0}=\add_{\cat A}(\Ind^0_E\cat A\cup
\Ind^{>}_E\cat A)$. For an indecomposable preinjective object, we define similar subcategories with~$\delta^+$ 
replaced by~$\delta^-$. 

It turns out that $(\cat A^{<}_E,\cat A^0_E,\cat A^{>}_E)$ is a factorizing triple. Moreover, we have the following
\begin{proposition}
\begin{enumerate}[{\rm(a)}]
\item\label{prop:tr-ppj.a'} $\Ind_E^<\cat A$ and $\Ind_{E}^0\cat A$ are finite sets;
 \item\label{prop:tr-ppj.a} $\cat A^{>}_E$, $\cat A^{0}_E$, $\cat A^{<}_E$ and $\cat A^{\ge}_E$ are closed under extensions;
\item\label{prop:tr-ppj.b} 
The multiplication map $H_{\cat A^{<}_E}\tensor H_{\cat A^{0}_E}\tensor H_{\cat A^{>}_E}\to H_{\cat A}$ 
is an isomorphism of vector spaces
and $\Exp_{\cat A}=\Exp_{\cat A^{<}_E}\Exp_{\cat A^{0}_E}\Exp_{\cat A^{>}_E}$;
\item\label{prop:tr-ppj.c} 
For all $[M],[N]\in \Ind_{E}^0\cat A$, $[M][N]=q^r [N][M]$, $q=|\kk|$, $r\in\mathbb Z$, and 
$H_{\cat A^{0}_E}$ is generated by the $[M]\in\Ind_{E}^0\cat A$; 
\item\label{prop:tr-ppj.d}
$\mathcal S_E$ defined in Proposition~\ref{prop:auto} is an Ore set in~$H_{\cat A}$ and
$\Exp_{\cat A}([E])$ acts on $H_{\cat A}[\mathcal S_E^{-1}]$ by conjugation.
\end{enumerate}
\end{proposition}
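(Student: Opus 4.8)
The plan is to prove the four assertions in order, each reducing to a structural result established above; throughout I use the function $\delta^+$ together with the orthogonality estimates of Proposition~\ref{prop:preinj-preproj}, keeping careful track of which of $\Ext^1/\Hom$ vanishes and on which side the (necessarily preprojective) object sits. Part~\eqref{prop:tr-ppj.a'} is immediate: since $\delta^+(E)<\infty$, both $\Ind_E^<\cat A$ and $\Ind_E^0\cat A$ lie in $\{\alpha\in\Ind\cat A:\delta^+(\alpha)\le\delta^+(E)\}$, which is finite by Proposition~\ref{prop:preinj-preproj}\eqref{prop:preinj-preproj.a}. For part~\eqref{prop:tr-ppj.a} I would apply Lemma~\ref{lem:orth-cond} to the partition $\Ind\cat A=\Ind_E^>\cat A\sqcup\Ind_E^0\cat A\sqcup\Ind_E^<\cat A$ ordered by \emph{decreasing} $\delta^+$. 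Its hypothesis — no nonzero morphism from a block of larger $\delta^+$ to one of smaller $\delta^+$ — is exactly Proposition~\ref{prop:preinj-preproj}\eqref{prop:preinj-preproj.b}, whose target lies in $\Ind_E^0\cat A$ or $\Ind_E^<\cat A$ and is therefore preprojective, as that statement requires. This yields closure under extensions for $\cat A^>_E$, $\cat A^0_E$, $\cat A^<_E$, and the coarser two-block partition $(\Ind_E^0\cat A\cup\Ind_E^>\cat A)\sqcup\Ind_E^<\cat A$ gives the same for $\cat A^{\ge}_E$.

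For part~\eqref{prop:tr-ppj.b} I would invoke Proposition~\ref{prop:decomp} for $\cat A=\cat A^<_E\vee\cat A^0_E\vee\cat A^>_E$ (a valid decomposition since $\cat A$ is Krull--Schmidt and the three blocks are closed under extensions by part~\eqref{prop:tr-ppj.a}). The required Ext-orthogonality $\cat A_j\subset\cat A_i^\rort$ for $i<j$, i.e.\ $\Ext^1_{\cat A}(\cat A_i,\cat A_j)=0$ with source in $\cat A^<_E$ or $\cat A^0_E$, follows from Proposition~\ref{prop:preinj-preproj}\eqref{prop:preinj-preproj.c}: the source $P$ is preprojective, and $\Ext^1(P,X)\ne0$ would force $\delta^+(X)\le\delta^+(P)-1\le\delta^+(E)-1<\delta^+(E)\le\delta^+(X)$, a contradiction. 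Proposition~\ref{prop:decomp}\eqref{prop:decomp.a} then gives the multiplication isomorphism. The extra hypothesis of Proposition~\ref{prop:decomp}\eqref{prop:decomp.b}, that $\Hom_{\cat A}(\cat A_j,\cat A_i)=0$ for $i<j$, is precisely the Hom-vanishing used in part~\eqref{prop:tr-ppj.a}, so $(\cat A^<_E,\cat A^0_E,\cat A^>_E)$ is a factorizing sequence and Proposition~\ref{pr:factorizable sequence}\eqref{pr:fs.a} produces $\Exp_{\cat A}=\Exp_{\cat A^<_E}\Exp_{\cat A^0_E}\Exp_{\cat A^>_E}$.

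For part~\eqref{prop:tr-ppj.c}, any $[M],[N]\in\Ind_E^0\cat A$ satisfy $\delta^+(M)=\delta^+(N)=\delta^+(E)$, so Proposition~\ref{prop:preinj-preproj}\eqref{prop:preinj-preproj.c} (with either object as source) forces $\Ext^1_{\cat A}(M,N)=\Ext^1_{\cat A}(N,M)=0$. Then~\eqref{eq:Rie} shows that $[M][N]$ and $[N][M]$ are each a nonzero scalar multiple of $[M\oplus N]$ (only the split extension contributes), and dividing gives $[M][N]=(|\Hom_{\cat A}(N,M)|/|\Hom_{\cat A}(M,N)|)[N][M]=q^r[N][M]$ with $q=|\kk|$ and $r=\dim_\kk\Hom_{\cat A}(N,M)-\dim_\kk\Hom_{\cat A}(M,N)\in\mathbb Z$. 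Since all $\Ext^1$ among objects of $\Ind_E^0\cat A$ vanish, iterating~\eqref{eq:Rie} shows $[M_1]\cdots[M_k]$ is a nonzero scalar times $[M_1\oplus\cdots\oplus M_k]$ for $M_i\in\Ind_E^0\cat A$; as every object of $\cat A^0_E$ is such a direct sum, the classes $[M]$ with $[M]\in\Ind_E^0\cat A$ generate $H_{\cat A^0_E}$.

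Part~\eqref{prop:tr-ppj.d} is the crux, and I would deduce it from Proposition~\ref{prop:abs-Ore} with $\mathcal A=H_{\cat A}$, $x=[E]$ and $q=q_E$. The essential point is that Ext-orthogonality to $E$ is \emph{uniform on each block}: for $X\in\Ob\cat A^<_E$ every summand is preprojective with $\delta^+<\delta^+(E)$, so $\Ext^1_{\cat A}(X,E)=0$ by Proposition~\ref{prop:preinj-preproj}\eqref{prop:preinj-preproj.c}; for $X\in\Ob\cat A^>_E$ one gets $\Ext^1_{\cat A}(E,X)=0$ the same way (now with $E$ as source, using $\delta^+(X)>\delta^+(E)$, including the non-preprojective $X$ with $\delta^+=\infty$); and for $X\in\Ob\cat A^0_E$ both vanish by part~\eqref{prop:tr-ppj.c}. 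Hence Proposition~\ref{prop:ore-set} applies to every $[X]$ with $X$ in one of the three blocks, yielding an $\ad$- or $\ad^*$-annihilation relation for $[X]$ with the single base $q_E$ and an integer shift $\pm\nu_E(X)$. Taking $\mathcal Y=\{[X]:X\in\Ob\cat A^<_E\cup\Ob\cat A^0_E\cup\Ob\cat A^>_E\}$ — a generating set for $H_{\cat A}$, since the multiplication isomorphism of part~\eqref{prop:tr-ppj.b} writes each basis vector $[K]$ as a scalar multiple of $[K_<][K_0][K_>]$ — and using that $[E]$ is homogeneous of nonzero degree so that $t\mapsto[E]$ extends to $\mathbb Q[[t]]\to\widehat H_{\cat A}$, Proposition~\ref{prop:abs-Ore} simultaneously gives that $\mathcal S_E$ is an Ore set in $H_{\cat A}$ and that conjugation by $\Exp_{\cat A}([E])$ is an automorphism of $H_{\cat A}[\mathcal S_E^{-1}]$. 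The main obstacle is exactly this last step: Proposition~\ref{prop:abs-Ore} demands a generating set on which one single $q_E$ governs all the commutator relations, and the naive choice of all indecomposables would require separately proving that they generate $H_{\cat A}$. The observation that sidesteps this is that the triangular decomposition of part~\eqref{prop:tr-ppj.b} already supplies a generating set, and that orthogonality to $E$ is constant on each block — so Proposition~\ref{prop:ore-set} feeds every generator into Proposition~\ref{prop:abs-Ore} — the remaining work being the orientation bookkeeping in Proposition~\ref{prop:preinj-preproj}\eqref{prop:preinj-preproj.b}--\eqref{prop:preinj-preproj.c} and checking that the Ore set produced coincides with $\mathcal S_E$.
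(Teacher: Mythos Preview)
Your proof is correct and follows essentially the same route as the paper's: parts~\eqref{prop:tr-ppj.a'}--\eqref{prop:tr-ppj.b} are handled identically via Proposition~\ref{prop:preinj-preproj}, Lemma~\ref{lem:orth-cond}, and Proposition~\ref{prop:decomp}, and part~\eqref{prop:tr-ppj.d} proceeds by using the triangular decomposition to supply a generating set satisfying the hypotheses of Proposition~\ref{prop:abs-Ore}. The only minor differences are that in part~\eqref{prop:tr-ppj.c} the paper additionally invokes Lemma~\ref{lem:assym} to force one of the two $\Hom$-spaces to vanish before applying Corollary~\ref{cor:deg-Rie}, whereas your direct appeal to Riedtmann's formula~\eqref{eq:Rie} bypasses this; and in part~\eqref{prop:tr-ppj.d} the paper phrases the final step as ``apply Proposition~\ref{prop:auto}'' while you unpack this into an explicit application of Proposition~\ref{prop:abs-Ore} on the block generating set---arguably the cleaner way to state it, since Proposition~\ref{prop:auto} alone only gives the Ore property inside each of $H_{\add_{\cat A}(E)^{\lort}}$ and $H_{\add_{\cat A}(E)^{\rort}}$ separately.
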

\begin{proof}
Part~\eqref{prop:tr-ppj.a'} is immediate from Proposition~\ref{prop:preinj-preproj}\eqref{prop:preinj-preproj.a}.

Let $[M]\in\Ind_E^<\cat A$, $[N]\in\Ind_{0}\cat A$, $[K]\in\Ind_E^>\cat A$. Then $$
\Hom_{\cat A}(K,N)=\Hom_{\cat A}(K,M)=\Hom_{\cat A}(N,M)=0$$
by Proposition~\ref{prop:preinj-preproj}\eqref{prop:preinj-preproj.b},  and part~\eqref{prop:tr-ppj.a} follows from Lemma~\ref{lem:orth-cond}.
Furthermore, $\cat A=\cat A_E^<\bigvee\cat A_E^0\bigvee\cat A_E^>$.
By Proposition~\ref{prop:preinj-preproj}\eqref{prop:preinj-preproj.c}
$$
\cat A_E^< \subset (\cat A_E^{\ge 0})^{\lort},\qquad \cat A_E^0\subset (\cat A_E^{>})^{\lort}.
$$
Then~\eqref{prop:tr-ppj.b} follows from Proposition~\ref{prop:decomp}.
To prove~\eqref{prop:tr-ppj.c}, note that by  Proposition~\ref{prop:preinj-preproj}\eqref{prop:preinj-preproj.c}, $\Ext^1_{\cat A}(L,L')=0=\Ext^1_{\cat A}(L',L)$ for all $L,L'$ with $[L],[L']\in\Ind_E^{0}\cat A$ and by Lemma~\ref{lem:assym}
$\Hom_{\cat A}(L,L')\not=0$ implies that $\Hom_{\cat A}(L',L)=0$. Thus, by Corollary~\ref{cor:deg-Rie}
$$
{[L]} {[L']}=\frac{|\Hom_{\cat A}(L',L)|}{|\Hom_{\cat A}(L,L')|}{[L']}{[L]}.
$$
Since $\Hom_{\cat A}(L,L')$ is a finite dimensional $\kk$-vector space, this fraction is a power of~$q$.
In particular, every ${[N]}\in\Ind A^{0}_E$, can be written, up to a power of~$q$, as a product of the $[L]\in\Ind_{E}^0\cat A$.
To prove~\eqref{prop:tr-ppj.d}, note that by part~\eqref{prop:tr-ppj.b}, every basis element~$[M]$ of~$H_{\cat A}$ can be written
as a product of a basis element of $H_{\add_{\cat A}(E)^\rort}$ and of a basis element of $H_{\add_{\cat A}(E)^\lort}$.
It remains to apply Proposition~\ref{prop:auto}.
\end{proof}

\begin{lemma}\label{lem:norm-ord}
There exists a total order~$\prec$, called {\em normal}, on $\widetilde{\cat P}(\cat A)$ such that 
$[M]\prec[N]$ implies that $\Ext^1_{\cat A}(M,N)=0=\Hom_{\cat A}(N,M)$. Similarly, there exists a total order $\prec$ on 
$\widetilde{\cat I}(\cat A)$ such that $[M]\prec[N]$ implies that $\Ext^1_{\cat A}(N,M)=0=
\Hom_{\cat A}(M,N)$. 
\end{lemma}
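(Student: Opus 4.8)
The plan is to build $\prec$ as a refinement of the preorder induced by the function $\delta^+$. Recall from Proposition~\ref{prop:preinj-preproj}\eqref{prop:preinj-preproj.b} and~\eqref{prop:preinj-preproj.c} that for indecomposable preprojectives $M,N$ one has $\Hom_{\cat A}(N,M)\ne 0\implies \delta^+(N)\le\delta^+(M)$ and $\Ext^1_{\cat A}(M,N)\ne 0\implies\delta^+(N)\le\delta^+(M)-1$. Hence if I declare $[M]\prec[N]$ whenever $\delta^+(M)<\delta^+(N)$, the required vanishing is automatic across distinct values of $\delta^+$: indeed $[M]\prec[N]$ then forces $\delta^+(M)<\delta^+(N)$, which is incompatible with either $\Hom_{\cat A}(N,M)\ne0$ or $\Ext^1_{\cat A}(M,N)\ne0$. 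The same two inequalities show that for $M,N$ with $\delta^+(M)=\delta^+(N)$ one always has $\Ext^1_{\cat A}(M,N)=0$, so on a single level only the $\Hom$ condition remains to be arranged. It therefore suffices to totally order each level $\Ind_d=\{[M]:\delta^+(M)=d\}$, which is finite by Proposition~\ref{prop:preinj-preproj}\eqref{prop:preinj-preproj.a}, in such a way that $\Hom_{\cat A}(N,M)\ne0$ with $M\not\cong N$ implies $[N]\prec[M]$.

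The crux is the within-level step. For $d\ge 1$ every $M\in\Ind_d$ is a non-projective indecomposable, hence has no projective direct summand, and the same holds for $(\tau^+)^k M$ when $0\le k<d$. By the dual of \cite{ARS}*{Lemma~VIII.1.1}, $\tau^+$ is fully faithful on modules without projective summands, so iterating $d$ times yields $\Hom_{\cat A}(M,N)\cong\Hom_{\cat A}(P_M,P_N)$, where $P_M=(\tau^+)^d M$ and $P_N=(\tau^+)^d N$ are the indecomposable projectives at the bottoms of the respective $\tau^+$-orbits; distinct classes in $\Ind_d$ give distinct projectives. The relation ``$\Hom_{\cat A}(N,M)\ne0$'' on $\Ind_d$ is thus transported isomorphically to the same relation among the corresponding indecomposable projectives, and the hard part is to show this relation is acyclic. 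I would do this via \cite{ARS}*{Lemma~II.1.12}, by which $\Hom_{\cat A}(P,P')\ne0$ implies $P$ is isomorphic to a submodule of $P'$: a directed cycle $P_1\to P_2\to\cdots\to P_m\to P_1$ among distinct projectives would force $\dim_\kk P_1\le\dim_\kk P_2\le\cdots\le\dim_\kk P_m\le\dim_\kk P_1$, so all the inclusions would be isomorphisms and the $P_i$ would coincide, a contradiction. Hence the $\Hom$-relation on $\Ind_d$ is acyclic, its transitive closure is a strict partial order, and any linear extension $\prec_d$ of it on the finite set $\Ind_d$ has the desired property.

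Finally I would assemble the global order lexicographically: set $[M]\prec[N]$ iff $\delta^+(M)<\delta^+(N)$, or $\delta^+(M)=\delta^+(N)=d$ and $[M]\prec_d[N]$. This is a total order on the countable set $\widetilde{\cat P}(\cat A)$, and by the two cases above $[M]\prec[N]$ implies $\Ext^1_{\cat A}(M,N)=0$ and $\Hom_{\cat A}(N,M)=0$. The preinjective statement follows by a completely dual argument: replace $\delta^+$ by $\delta^-$ and invoke Proposition~\ref{prop:preinj-preproj}\eqref{prop:preinj-preproj.d} and~\eqref{prop:preinj-preproj.e}, use the full faithfulness of $\tau^-$ on modules without injective summands (\cite{ARS}*{Lemma~VIII.1.1}) to reduce each level to the indecomposable injectives, and apply the dual of \cite{ARS}*{Lemma~II.1.12} (which realizes $\Hom_{\cat A}(I,I')\ne0$ as exhibiting $I'$ as a quotient of $I$, again bounding $\dim_\kk$) to get acyclicity; the resulting order satisfies $[M]\prec[N]\implies\Ext^1_{\cat A}(N,M)=0=\Hom_{\cat A}(M,N)$. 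The only genuinely nontrivial point is the within-level acyclicity, and the reduction along $\tau^\pm$ to projectives and injectives is precisely what supplies it.
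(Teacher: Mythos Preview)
Your proof is correct and follows essentially the same strategy as the paper's: order lexicographically by $\delta^+$ and, within each level, transport the problem via the Auslander--Reiten translation to the indecomposable projectives, where the $\Hom$-relation is seen to be acyclic. The only cosmetic differences are that the paper pushes forward from level~$0$ using the full faithfulness of~$\tau^-$ on modules without injective summands (rather than pulling back via~$\tau^+$ as you do), and it dispatches the acyclicity among projectives by invoking that~$\cat A$ is acyclic rather than by your explicit dimension count using \cite{ARS}*{Lemma~II.1.12}; your version is in fact more detailed in verifying the $\Ext$ and $\Hom$ vanishings across and within levels.
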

\begin{proof}
Fix $n\ge 0$. We first proceed to define an order~$\prec_n$ on the set $\{\alpha\in\widetilde{\cat P}(\cat A)\,:\,
\delta^+(\alpha)=n\}$
then extend it to a total order on $\widetilde{\cat P}(\cat A)$ by
setting $\alpha\prec \beta$ if $\delta^+(\alpha)<\delta^+(\beta)$ or $\delta^+(\alpha)=\delta^+(\beta)=n$ and
$\alpha\prec_n\beta$. To define $\prec_n$, it is enough to define $\prec_0$, since $\tau^-$ is 
a fully faithful functor on the full subcategory category of~$\cat A$ whose objects have no injective summands. 
Thus, if $P$, $P'$ are projective indecomposables, set $P\prec_0 P'$ if $P$ is a subobject of~$P'$. Since 
$\cat A$ is acyclic, this can be completed to a total order on the set of projective indecomposables with the desired 
property. 
\end{proof}

\subsection{}
Suppose that~$\cat A$ is indecomposable in the sense that it cannot be written as a direct sum of abelian blocks.
Then $\Ind\cat A$ is a finite set if and only if there exists an indecomposable 
module which is both preinjective and preprojective (\cite{ARS}*{Proposition VIII.1.14}). Otherwise, there exists 
an indecomposable module, called regular, which is neither preprojective, nor preinjective.
Let $\cat R(\Lambda)$ be the set of isomorphism 
classes of such modules. More generally, a module is called regular if all its indecomposable summands are regular.

Let $\cat A_-=\add_{\cat A}\widetilde{\cat P}(\cat A)$, $\cat A_+=\add_{\cat A}\widetilde{\cat I}(\cat A)$ and $\cat A_0=
\add_{\cat A}\cat R(\Lambda)$. Thus, $\cat A_-$ (respectively, $\cat A_+$, $\cat A_0$) is the full subcategory of
preprojective (respectively, preinjective, regular) modules.  We also set $\cat A_{\ge 0}=\add_{\cat A}(\widetilde{\cat I}(\cat A)\bigsqcup
\cat R(\cat A))$ and $\cat A_{\le 0}=\add_{\cat A}(\widetilde{\cat P}(\cat A)\bigsqcup \cat R(\cat A)$; thus, objects of~$\cat A_{\ge 0}$ (respectively,
$\cat A_{\le 0}$)
are modules which have no preprojective (respectively, preinjective) summands.
Since $$\Ind\cat A=\widetilde{\cat P}(\cat A)\bigsqcup \cat R(\Lambda)\bigsqcup \widetilde{\cat I}(\cat A)$$ and 
$\Hom_{\cat A}(R,P)=\Hom_{\cat A}(I,P)=\Hom_{\cat A}(I,R)=0$ for all indecomposable $P$, $R$, $I$ such that $[P]\in \widetilde{\cat P}(\cat A)$,
$[R]\in\cat R(\Lambda)$ and $[I]\in\widetilde{\cat I}(\cat A)$, it follows from Lemma~\ref{lem:orth-cond} that 
$\cat A_\pm$, $\cat A_{\ge 0}$, $\cat A_{\le 0}$ and $\cat A_0$ are closed under extensions. 
\begin{proposition}\label{prop:factor}
Suppose that $\Ind\cat A$ is infinite. Then $H_{\cat A}\cong H_{\cat A_-}\tensor H_{\cat A_0}\tensor 
H_{\cat A_+}$ as an algebra, and $\Exp_{\cat A}=\Exp_{\cat A_-}\Exp_{\cat A_{\ge 0}}=\Exp_{\cat A_-} \Exp_{\cat A_0} 
\Exp_{\cat A_+}=\Exp_{\cat A_{\le 0}}\Exp_{\cat A_+} $. Moreover,
$$
\Exp_{\cat A_\pm}=\overset{\rightarrow}{\displaystyle\prod_{[M]\in\Ind\cat A_\pm}}\Exp_{\cat A}([M]),
$$
where the product is taken in the normal order, that is, if $[M]\prec [M']$ then $\Exp_{\cat A}([M])$
occurs to the left of~$\Exp_{\cat A}([M'])$.
\end{proposition}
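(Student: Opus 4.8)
The plan is to first establish the coarse decomposition and the $\Exp$-factorizations, and then bootstrap from these to the fine factorization over indecomposables. First I would record the Ext-orthogonality of the three pieces. Since $\tau^+$ preserves each of the preprojective, regular and preinjective classes, formula~\eqref{eq:Ext-Hom} together with the already noted vanishings $\Hom_{\cat A}(R,P)=\Hom_{\cat A}(I,P)=\Hom_{\cat A}(I,R)=0$ gives $\dim_\kk\Ext^1_{\cat A}(P,R)=\dim_\kk\Hom_{\cat A}(R,\tau^+P)=0$, and likewise $\Ext^1_{\cat A}(P,I)=0$ and $\Ext^1_{\cat A}(R,I)=0$, for indecomposables $[P]\in\widetilde{\cat P}(\cat A)$, $[R]\in\cat R(\Lambda)$, $[I]\in\widetilde{\cat I}(\cat A)$. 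Hence $\cat A_j\subset\cat A_i^{\rort}$ for the ordering $(\cat A_1,\cat A_2,\cat A_3)=(\cat A_-,\cat A_0,\cat A_+)$, and, combining this with $\Hom_{\cat A}(\cat A_{\ge 0},\cat A_-)=0=\Hom_{\cat A}(\cat A_+,\cat A_0)$, Proposition~\ref{prop:decomp}\eqref{prop:decomp.a} yields the triangular decomposition $H_{\cat A}\cong H_{\cat A_-}\tensor H_{\cat A_0}\tensor H_{\cat A_+}$ (each factor being a subalgebra and the multiplication map a linear bijection), while Proposition~\ref{prop:decomp}\eqref{prop:decomp.b} identifies $(\cat A_-,\cat A_0,\cat A_+)$, $(\cat A_-,\cat A_{\ge 0})$, $(\cat A_0,\cat A_+)$ and $(\cat A_{\le 0},\cat A_+)$ as factorizing sequences. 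Proposition~\ref{pr:factorizable sequence}\eqref{pr:fs.a} then produces all the stated equalities of exponentials.

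For the factorization of $\Exp_{\cat A_-}$ I would enumerate $\widetilde{\cat P}(\cat A)=\{[M_1]\prec[M_2]\prec\cdots\}$ in the normal order of Lemma~\ref{lem:norm-ord}. Because that order refines $\delta^+$ and each level $\{\alpha:\delta^+(\alpha)=n\}$ is finite by Proposition~\ref{prop:preinj-preproj}\eqref{prop:preinj-preproj.a}, the order has type at most $\omega$ and every initial segment is finite. Each $M_i$ is an exceptional brick by Proposition~\ref{prop:preinj-preproj}\eqref{prop:preinj-preproj.f}, so $\Exp_{\cat A}([M_i])=\exp_{q_{M_i}}([M_i])=\sum_{a\ge 0}[M_i^{\oplus a}]$ by Corollary~\ref{cor:deg-Rie}\eqref{cor:deg-Rie.b}. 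Next, for each $k$ put $\cat B_k=\add_{\cat A}\{M_1,\dots,M_k\}$ and $\cat C_k=\add_{\cat A}\{M_{k+1},M_{k+2},\dots\}$. The defining property of the normal order gives $\Ext^1_{\cat A}(M_i,M_j)=0=\Hom_{\cat A}(M_j,M_i)$ whenever $i<j$; hence by Corollary~\ref{cor:deg-Rie}\eqref{cor:deg-Rie.a} the Hall product of a class whose indecomposable summands are among $\{M_1,\dots,M_k\}$ with one whose summands lie in $\{M_{k+1},\dots\}$ equals the class of their direct sum. Summing over all such pairs and invoking Krull--Schmidt uniqueness (each preprojective object splits uniquely according to whether its indecomposable summands are indexed $\le k$ or $>k$) yields $\Exp_{\cat B_k}\Exp_{\cat C_k}=\Exp_{\cat A_-}$; iterating the same computation inside $\cat B_k$, peeling off the smallest factor at each step, gives $\Exp_{\cat B_k}=\Exp_{\cat A}([M_1])\cdots\Exp_{\cat A}([M_k])$. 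Therefore $\Exp_{\cat A_-}=\Exp_{\cat A}([M_1])\cdots\Exp_{\cat A}([M_k])\,\Exp_{\cat C_k}$ for every $k$.

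It remains to let $k\to\infty$ in the completion $\widehat H_{\cat A_-}$, and this is the step I expect to be the main obstacle, since it requires both convergence of the infinite ordered product and control of the tail $\Exp_{\cat C_k}$. Fix $\gamma\in K_0^+(\cat A_-)$. The set $\{[M]\in\widetilde{\cat P}(\cat A):\gr{M}\le\gamma\}$ is finite, because there are finitely many classes $\le\gamma$ and, by Proposition~\ref{prop:preinj-preproj}\eqref{prop:preinj-preproj.g}, a preprojective indecomposable is determined by its class; choose $K$ so that all of them occur among $M_1,\dots,M_K$. For $k\ge K$ every nonzero object of $\cat C_k$ has class $\not\le\gamma$, so $\Exp_{\cat C_k}=1+(\text{terms of degree}\not\le\gamma)$, and the degree-$\gamma$ component of $\Exp_{\cat A_-}$ coincides with that of the partial product $\Exp_{\cat A}([M_1])\cdots\Exp_{\cat A}([M_k])$. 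As $\gamma$ was arbitrary, this proves $\Exp_{\cat A_-}=\overrightarrow{\prod_{[M]\in\Ind\cat A_-}}\Exp_{\cat A}([M])$ in normal order. Finally, the preinjective statement follows by applying the duality functor $D\colon\cat A\to\Mod\Lambda^{\mathrm{op}}$, which interchanges $\widetilde{\cat P}$ and $\widetilde{\cat I}$ and reverses the relevant $\Hom$ and $\Ext^1$ vanishings, thereby reducing the preinjective case to the preprojective one (with the product read in the corresponding order, as recorded in Theorem~\ref{thm:descr-prepr-Hall}).
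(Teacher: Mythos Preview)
Your proof is correct and follows essentially the same route as the paper's: establish the $\Hom$/$\Ext^1$ vanishings among preprojectives, regulars and preinjectives, invoke Proposition~\ref{prop:decomp} for the triangular decomposition and the $\Exp$-factorizations, and then use Corollary~\ref{cor:deg-Rie}\eqref{cor:deg-Rie.a} to split each preprojective class as an ordered Hall product of its indecomposable pieces. The only differences are cosmetic: the paper obtains the $\Ext^1$ vanishings by citing Proposition~\ref{prop:preinj-preproj}(\ref{prop:preinj-preproj.c},\ref{prop:preinj-preproj.e}) directly rather than via~\eqref{eq:Ext-Hom}, and it handles the infinite product in one line by writing $[P_1^{\oplus a_1}\oplus P_2^{\oplus a_2}\oplus\cdots]=[P_1^{\oplus a_1}][P_2^{\oplus a_2}]\cdots$ and summing, whereas you spell out the tail estimate in~$\widehat H_{\cat A_-}$ explicitly; your version is more carefully justified but not a different argument.
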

\begin{proof}
Clearly, $\cat A=\cat A_-\bigvee\cat A_0\bigvee \cat A_+$.
Since by Proposition~\ref{prop:preinj-preproj}
$$
\Ext^1_{\cat A}(M_-,M_0)=\Ext^1_{\cat A}(M_-,M_+)=\Ext^1_{\cat A}(M_0,M_+)=0
$$
and 
$$
\Hom_{\cat A}(M_0,M_-)=\Hom_{\cat A}(M_+,M_-)=\Hom_{\cat A}(M_+,M_0)=0
$$
the first assertion follows from Proposition~\ref{prop:decomp}.

To prove the second, note that 
$[P]\prec [Q]$ implies that $\Hom_{\cat A}(Q^{\oplus b},P^{\oplus a})=0=\Ext^1_{\cat A}(P^{\oplus a},Q^{\oplus b})$.
Since $\widetilde{\cat P}(\cat A)$ is countable, number its elements according to the normal order
as 
$$
\widetilde{\cat P}(\cat A)=\{[P_1],[P_2],\dots\},\qquad [P_1]\prec[P_2]\prec\cdots
$$ Then  we have by
Corollary~\ref{cor:deg-Rie}
$$
{[P_1^{\oplus a_1}\oplus P_2^{\oplus a_2}\oplus\cdots]}={[P_1^{\oplus a_1}]}{[P_2^{\oplus a_1}]}\cdots
$$ 
and we obtain the desired factorization for $\Exp_{\cat A_-}$.
\end{proof}
Thus, we have
\begin{equation}\label{eq:2-fact}
\Exp_{\cat A_-}\Exp_{\cat A_0}\Exp_{\cat A_+}=\Exp_{\cat A}=\Exp_{\cat A}({[S_1]})\cdots \Exp_{\cat A}({[S_r]}),
\end{equation}
where the product in the right hand side is written in the source order.
\subsection{}\label{sec:Coxeter}
Fix an ordering of the isomorphism classes of simples, say the one corresponding to the source order.
Since $\cat A$ has the finite length property, $\mathbf S=\{|S_i|\}_{1\le i\le r}$ is a basis of~$K_0(\cat A)$
and
$$
\gr M=\sum_{i} [M:S_i] \gr{S_i}.
$$
It is well-known (cf.~\cite{ARS}*{Lemma~VIII.2.1}) that $\mathbf P=\{\gr{P_i}\}_{1\le i\le r}$ (respectively, $\mathbf I=\{\gr{I_i}\}_{1\le i\le r}$) where $P_i$ (respectively, $I_i$) is the projective cover 
(respectively, the injective envelope) of a simple object~$S_i$, are bases of~$K_0(\cat A)$.
The automorphism~$c$ of~$K_0(\cat A)$ defined by $c(\gr{P_i})=-\gr{I_i}$ is called the {\em Coxeter transformation}.

Consider the non-symmetric Euler form $(\cdot\,,\,\cdot):K_0(\cat A)\times K_0(\cat A)\to\mathbb Z$ defined by
$$
(|M|,|N|)=\dim_\kk \Hom_{\cat A}(M,N)-\dim_\kk\Ext^1_{\cat A}(M,N).
$$
Thus, $\lr{|M|,|N|}=|\kk|^{(|M|,|N|)}$.
Let~$d_i=(|S_i|\,,\,|S_i|)$. 
Since for any~$M\in\Ob\cat A$
$$
\dim_{\End_{\cat A}S_i}\Hom_{\cat A}(P_i,M)=[M:S_i]=\dim_{\End_{\cat A}S_i}\Hom_{\cat A}(M,I_i)
$$
we have
$$
(|P_i|,|S_j|)=d_i \delta_{i,j}=\sum_{k} [P_i:S_k](|S_k|,|S_j|)
$$
which implies that the basis change matrix from~$\mathbf P$ to~$\mathbf S$
is equal to $(C^T)^{-1}D$, where 
$$
C=((|S_i|,|S_j|))_{1\le i,j\le r}
$$ 
is the matrix of the Euler form 
with respect to~$\mathbf S$ and $D=\operatorname{diag}(d_1,\dots,d_r)$. 
Similarly, the basis change matrix 
from~$\mathbf I$ to~$\mathbf S$ equals~$C^{-1}D$.
Thus, we obtain
\begin{lemma}\label{lem:Cox-matrix}
The matrix of the Coxeter transformation with respect to~$\mathbf S$ equals~$-C^{-1}C^T$.
\end{lemma}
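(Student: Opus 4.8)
The plan is to read the matrix of $c$ directly off the two change-of-basis matrices computed in the paragraph preceding the statement, so that essentially no new work is needed beyond careful bookkeeping. Write $B$ for the matrix whose $i$-th column records the $\mathbf S$-coordinates of $\gr{P_i}$ and $E$ for the matrix whose $i$-th column records the $\mathbf S$-coordinates of $\gr{I_i}$; by the computations just given these are $B=(C^T)^{-1}D$ and $E=C^{-1}D$. Here $D$ is invertible since each $d_i=\dim_\kk\End_{\cat A}S_i>0$, and $C$ is invertible because the Euler form is non-degenerate (so that $\mathbf P$ and $\mathbf I$ are indeed bases, consistent with the relation $B^TC=D$).

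First I would reinterpret the defining relation $c(\gr{P_i})=-\gr{I_i}$ as a single matrix identity in the basis $\mathbf S$. Letting $[c]_{\mathbf S}$ denote the matrix of $c$ with respect to $\mathbf S$, this relation says precisely that $[c]_{\mathbf S}$ sends the $i$-th column of $B$ (the $\mathbf S$-coordinates of $\gr{P_i}$) to minus the $i$-th column of $E$ (the $\mathbf S$-coordinates of $\gr{I_i}$). Running over all $i$ collapses these into the one identity $[c]_{\mathbf S}\,B=-E$.

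Then I would solve for $[c]_{\mathbf S}$ and simplify. Since $B$ is invertible with $B^{-1}=D^{-1}C^T$, we obtain
$$
[c]_{\mathbf S}=-E\,B^{-1}=-(C^{-1}D)(D^{-1}C^T)=-C^{-1}C^T,
$$
the diagonal factor $D$ cancelling against $D^{-1}$. This is exactly the asserted formula.

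The only delicate point, and the place where a transpose or sign slip is easiest to introduce, is the direction convention for the two change-of-basis matrices: one must confirm that $B$ and $E$ each convert $\mathbf P$- (respectively $\mathbf I$-) coordinates into $\mathbf S$-coordinates, with columns equal to the images of the basis vectors, so that the composite ``pass from $\mathbf S$ to $\mathbf P$ via $B^{-1}$, apply $c$ which carries the $\mathbf P$-basis to minus the $\mathbf I$-basis, then pass to $\mathbf S$ via $E$'' yields $[c]_{\mathbf S}=E\cdot(-I)\cdot B^{-1}$. Once these directions are pinned down, the result follows from the one-line cancellation above, and there is no structural obstacle remaining.
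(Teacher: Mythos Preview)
Your proposal is correct and is essentially identical to the paper's argument: the paper simply writes ``Thus, we obtain'' immediately after recording the two change-of-basis matrices $(C^T)^{-1}D$ and $C^{-1}D$, leaving the one-line cancellation $-(C^{-1}D)(D^{-1}C^T)=-C^{-1}C^T$ implicit, whereas you have spelled it out.
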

Since~$(|S_i|,|S_j|)=0$ if~$i>j$ and 
$d_i=\dim_{\kk}\End_{\cat A}S_i\not=0$, we conclude that the form~$(\cdot,\cdot)$ is non-degenerate.

We will need the following 
properties of the Coxeter transformation. 
\begin{proposition}[\cite{ARS}*{Proposition~VIII.2.2 and Corollary~VIII.2.3}]\label{prop:Coxeter}
Let $M$ be an indecomposable object.
\begin{enumerate}[{\rm(a)}]
 \item \label{prop:Coxeter.a} If~$M$ is not projective then $c(\gr M)=\gr{\tau^+ M}\in K_0^+(\cat A)$. Otherwise,
$c(\gr M)\in -K_0^+(\cat A)$.
\item \label{prop:Coxeter.b}
If~$M$ is not injective then $c^{-1}(\gr M)=\gr{\tau^- M}\in K_0^+(\cat A)$. Otherwise,
$c^{-1}(\gr M)\in -K_0^+(\cat A)$.
\item \label{prop:Coxeter.c}
For any preprojective (respectively, preinjective) object~$X$, there exists~$r> 0$ such that $c^r(\gr X)$
(respectively, $c^{-r}(\gr X)$) is in~$-K_0^+(\cat A)$.
\end{enumerate}
\end{proposition}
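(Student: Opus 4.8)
The plan is to reduce all three parts to part~\eqref{prop:Coxeter.a} and to prove that part by testing the desired identity against a basis of $K_0(\cat A)$, using the non-degeneracy of the Euler form $(\cdot,\cdot)$ together with the Auslander--Reiten formula~\eqref{eq:Ext-Hom}. The starting point is Lemma~\ref{lem:Cox-matrix}: since the matrix of $c$ in the basis $\mathbf S$ is $-C^{-1}C^T$ and $(v,w)=v^TCw$, a one-line computation gives the key relation $(w,c v)=-(v,w)$ for all $v,w\in K_0(\cat A)$ (indeed $w^TC(-C^{-1}C^T)v=-w^TC^Tv=-(v,w)$).

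For part~\eqref{prop:Coxeter.a}, the projective case is immediate: an indecomposable projective is some $P_i$, and $c(\gr{P_i})=-\gr{I_i}\in -K_0^+(\cat A)$ by the definition of $c$, since $I_i$ is an honest object. So suppose $M$ is indecomposable and non-projective; I want $c(\gr M)=\gr{\tau^+M}$. Because $\mathbf P=\{\gr{P_k}\}$ is a basis of $K_0(\cat A)$ and $(\cdot,\cdot)$ is non-degenerate, it suffices to check $(\gr{P_k},c\gr M)=(\gr{P_k},\gr{\tau^+M})$ for every $k$. On the left the key relation gives $(\gr{P_k},c\gr M)=-(\gr M,\gr{P_k})=-\dim_\kk\Hom_{\cat A}(M,P_k)+\dim_\kk\Ext^1_{\cat A}(M,P_k)$. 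Here $\Hom_{\cat A}(M,P_k)=0$: a nonzero map $M\to P_k$ would have image a submodule of the projective $P_k$, hence projective since $\cat A$ is hereditary, and the resulting surjection of $M$ onto a nonzero projective would split, forcing the indecomposable $M$ to be projective. Applying~\eqref{eq:Ext-Hom} with $X=M$, $Y=P_k$ rewrites the $\Ext$ term as $\dim_\kk\Hom_{\cat A}(P_k,\tau^+M)$, which equals $(\gr{P_k},\gr{\tau^+M})$ because $P_k$ is projective. This proves $c(\gr M)=\gr{\tau^+M}$, and membership in $K_0^+(\cat A)$ is then automatic since $\tau^+M$ is a nonzero object.

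Part~\eqref{prop:Coxeter.b} will follow formally from~\eqref{prop:Coxeter.a}. If $M$ is injective indecomposable then $M=I_i$ and $c^{-1}(\gr{I_i})=-\gr{P_i}\in -K_0^+(\cat A)$ directly from the definition of $c$. If $M$ is non-injective, then $\tau^-M$ is a nonzero non-projective object with $\tau^+\tau^-M\cong M$, so applying~\eqref{prop:Coxeter.a} to $\tau^-M$ gives $\gr M=c(\gr{\tau^-M})$, whence $c^{-1}(\gr M)=\gr{\tau^-M}\in K_0^+(\cat A)$; alternatively one invokes the duality $D\colon\cat A\to\cat A^{\mathrm{op}}$, which interchanges $\tau^+\leftrightarrow\tau^-$ and $c\leftrightarrow c^{-1}$.

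For part~\eqref{prop:Coxeter.c} I would iterate~\eqref{prop:Coxeter.a}. Let $X$ be an indecomposable preprojective with $\delta^+(X)=n$. By minimality of $n$, the objects $(\tau^+)^jX$ are non-projective indecomposables for $0\le j<n$, so~\eqref{prop:Coxeter.a} applies at each step and gives $c^j(\gr X)=\gr{(\tau^+)^jX}$ for $0\le j\le n$; in particular $c^n(\gr X)=\gr{P_i}$ for the projective $(\tau^+)^nX=P_i$, and one further application yields $c^{n+1}(\gr X)=-\gr{I_i}\in -K_0^+(\cat A)$. Thus $r=\delta^+(X)+1$ works for indecomposable $X$, and the preinjective statement is dual via $\delta^-$ and $c^{-1}$. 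For a decomposable preprojective $X=\bigoplus_s X_s$ one writes $c^r(\gr X)=\sum_s c^r(\gr{X_s})$ and uses that $-K_0^+(\cat A)$ is closed under addition, taking $r=\max_s\delta^+(X_s)+1$. The main obstacle is precisely to see that for each summand $c^r(\gr{X_s})$ stays in $-K_0^+(\cat A)$ for all $r\ge\delta^+(X_s)+1$, not merely at the single exponent $\delta^+(X_s)+1$. In the infinite-type case this is clean: once $c^{\delta^+(X_s)+1}(\gr{X_s})=-\gr{I_{i_s}}$, the injective $I_{i_s}$ is preinjective and never preprojective, so $(\tau^+)^kI_{i_s}$ is non-projective for every $k\ge 0$ and~\eqref{prop:Coxeter.a} forces every $c^k(\gr{I_{i_s}})=\gr{(\tau^+)^kI_{i_s}}$ to remain in $K_0^+(\cat A)$; in the finite-type case the same conclusion is reached from the periodicity of $c$ on the finite Auslander--Reiten quiver together with Proposition~\ref{prop:preinj-preproj}\eqref{prop:preinj-preproj.a}.
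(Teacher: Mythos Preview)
The paper does not prove this proposition at all; it is quoted from \cite{ARS}*{Proposition~VIII.2.2 and Corollary~VIII.2.3}. So there is no ``paper's own proof'' to compare against, and your write-up is a self-contained reconstruction of the standard argument.

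Your proofs of parts~\eqref{prop:Coxeter.a} and~\eqref{prop:Coxeter.b} are correct and are essentially the textbook argument: the identity $(w,cv)=-(v,w)$ coming from Lemma~\ref{lem:Cox-matrix}, the vanishing $\Hom_{\cat A}(M,P_k)=0$ for indecomposable non-projective $M$ (using that submodules of projectives are projective in a hereditary category), and the Auslander--Reiten duality~\eqref{eq:Ext-Hom} combine with non-degeneracy of the Euler form to pin down $c(\gr M)$. Deducing~\eqref{prop:Coxeter.b} from~\eqref{prop:Coxeter.a} via $\tau^+\tau^-M\cong M$ is clean.

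For part~\eqref{prop:Coxeter.c}, your argument for indecomposables is fine, and your infinite-type reduction for decomposable $X=\bigoplus_s X_s$ is correct: once $c^{\delta^+(X_s)+1}(\gr{X_s})=-\gr{I_{i_s}}$ and $I_{i_s}$ is not preprojective, part~\eqref{prop:Coxeter.a} iterates indefinitely and every further power stays in $-K_0^+(\cat A)$, so $r=\max_s\delta^+(X_s)+1$ works. This is the only case the paper uses, since the sole application of~\eqref{prop:Coxeter.c} is Lemma~\ref{lem:monoid-split}, which assumes infinite type.

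Your finite-type sentence, however, is not a proof, and the strategy you set up actually breaks down there. The plan is to find a single $r$ at which \emph{every} $c^r(\gr{X_s})$ lies in $-K_0^+(\cat A)$, but in finite type each $c^r(\gr{X_s})$ oscillates between $K_0^+(\cat A)$ and $-K_0^+(\cat A)$, and the negative windows need not overlap. Already in type $A_2$ with $X=S_1\oplus S_2$ one has $c^2(\gr{S_1})\in -K_0^+(\cat A)$ while $c^2(\gr{S_2})\in K_0^+(\cat A)$; the conclusion $c^2(\gr X)\in -K_0^+(\cat A)$ still holds, but only because of a cancellation, not because each summand contributes negatively. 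Invoking ``periodicity of $c$'' does not repair this. A correct argument in finite type has to work with $\gr X$ directly rather than summand by summand (for instance, via the fact that $1-c$ is invertible in Dynkin type, or via the explicit structure of $c$-orbits on the root system); this is genuine extra input and is where the citation to \cite{ARS} earns its keep.
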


\begin{proof}[Proof of Lemma~\ref{lem:gamma} and Theorem~\ref{thm:descr-prepr-Hall}]
Since $\lr{\gr{P_{i}},\alpha_j}=q_i^{\delta_{ij}}=\lr{\alpha_j,\gr{I_i}}$ and the form $\lr{\cdot,\cdot}$
is non-degenerate, it follows that~$\gamma_{-i}=|P_i|$ 
and $\gamma_i=\gr{I_i}$, $1\le i\le r$. Therefore, the automorphism~$c$ defined before Lemma~\ref{lem:gamma}
coincides with the Coxeter transformation.
Then~$\gamma_{i,k}=c^k(\gr{I_i})=\gr{(\tau^+)^k I_i}$
and~$\gamma_{-i,-k}=c^{-k}(\gr{P_i})=\gr{(\tau^-)^k P_i}$ for $1\le i\le r$ and~$k\ge 0$. By Proposition~\ref{prop:Coxeter},
for each~$\gamma\in\Gamma_\pm$ there exists a unique, up to an isomorphism, indecomposable~$E_\gamma$ with $\gr{E_\gamma}=\gamma$.
Thus, $\widetilde{\cat P}(\cat A)=\{ [E_\gamma]\,:\,\gamma\in \Gamma_-\}$ and $\widetilde{\cat I}(\cat A)=\{ [E_\gamma]\,:\,\gamma\in \Gamma_+\}$.
The remaining assertions of the Lemma follow the fact that the category~$\cat A$ has finitely many 
isomorphism classes of indecomposables if and only if $\widetilde{\cat P}(\cat A)\cap \widetilde{\cat I}(\cat A)\not=\emptyset$ and this happens 
if and only if $\widetilde{\cat P}(\cat A)=\widetilde{\cat I}(\cat A)$ (cf.~\cite{ARS}*{Propositions~VIII.1.13--14}).

Moreover, the order on~$\Gamma_\pm$
defined in~Theorem~\ref{thm:descr-prepr-Hall} coincides with the normal order on
$\widetilde{\cat P}(\cat A)$ and~$\widetilde{\cat I}(\cat A)$. It only remains to apply Proposition~\ref{prop:factor}.
\end{proof}

We have 
$$
\dim_{\kk}\Hom_{\cat A}(P_i,(\tau^-)^{k}P_j)=d_i[(\tau^-)^k(P_j):S_i]=(|P_i|,c^{-k}(|P_j|))
$$
and
$$
\dim_{\kk}\Ext^1_{\cat A}((\tau^-)^k P_j,P_i)=\dim_{\kk} \Hom_{\cat A}(P_i,(\tau^-)^{k-1}P_j)=
d_i[(\tau^-)^{k-1}P_j:S_i]=(|P_i|,c^{-k+1}(|P_j|)).
$$
Therefore, we have
\begin{equation}\label{eq:dim-Hom}
\dim_{\kk}\Hom_{\cat A}(E_{\gamma_{-i,-k}},E_{\gamma_{-j,-r}})=
d_i[(\tau^-)^{r-k}(P_j):S_i]=(\gamma_{-i},c^{k-r}(\gamma_{-j})),\qquad r\ge k
\end{equation}
and, by~\eqref{eq:Ext-Hom}
\begin{equation}\label{eq:dim-Ext}
\dim_{\kk}\Ext^1_{\cat A}(E_{\gamma_{-j,-r}},E_{\gamma_{-i,-k}})=d_i[(\tau^-)^{r-k-1}(P_j):S_i]
(\gamma_{-i},c^{k+1-r}(\gamma_{-j})),\qquad r>k.
\end{equation}
This, together with~\eqref{eq:fund-rel}, \eqref{eq:fund-rel'},  can be used to write explicit commutator relations between
indecomposable preprojective objects. Namely, 
$$
(\ad^*_{q^{(\gamma_{-i},c^{k-r}(\gamma_{-j}))}(1,q^{d_i},q^{2d_i},\dots,q^{(\gamma_{-i},c^{k+1-r}(\gamma_{-j}))})}[E_{\gamma_{-i,-k}}])([E_{\gamma_{-j,-r}}])=0,
$$
and
$$
(\ad_{q^{(\gamma_{-i},c^{k-r}(\gamma_{-j}))}(1,q^{d_j},q^{2d_j},\dots,q^{(\gamma_{-i},c^{k+1-r}(\gamma_{-j}))})}{[E_{\gamma_{-j,-r}}]})({[E_{\gamma_{-i,-k}}]})=0.
$$

\subsection{}\label{sec:compalg}
We finish this section with a proof of Corollary~\ref{cor:compalg}.
\begin{lemma}\label{lem:monoid-split}
Suppose that $\cat A$ is not of finite type. Then
$K_0^+(\cat A_-)\cap K_0^+(\cat A_{\ge 0})=0=K_0^+(\cat A_{\le 0})\cap K_0^+(\cat A_+)$.
\end{lemma}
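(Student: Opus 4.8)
The plan is to exploit the Coxeter transformation $c$ together with the positivity of the monoid $K_0^+(\cat A)$, using that $\cat A$ is of infinite type so that $\widetilde{\cat P}(\cat A)$, $\cat R(\Lambda)$ and $\widetilde{\cat I}(\cat A)$ are genuinely disjoint. I treat only the first identity, the second being entirely symmetric under $c\leftrightarrow c^{-1}$, $\tau^+\leftrightarrow\tau^-$, $\cat A_-\leftrightarrow\cat A_+$ and $\cat A_{\ge 0}\leftrightarrow\cat A_{\le 0}$. Since $\cat A_-$ and $\cat A_{\ge 0}$ are closed under direct sums, an element of $K_0^+(\cat A_-)$ (respectively $K_0^+(\cat A_{\ge 0})$) is the class $\gr{M}$ of a preprojective module $M$ (respectively the class $\gr{X}$ of a module $X$ all of whose indecomposable summands are preinjective or regular). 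So I would fix $\gamma$ in the intersection, write $\gamma=\gr{M}=\gr{X}$ as above, apply a suitable power $c^r$, and read off the sign of $c^r(\gamma)$ in two incompatible ways.

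The first ingredient is that classes of modules with no preprojective summand stay effective under all forward iterations of $c$: for such $X$ one has $c^k(\gr{X})\in K_0^+(\cat A)$ for every $k\ge 0$. To see this I would reduce to an indecomposable summand $Y$ of $X$, which is regular or preinjective and hence, in infinite type, never projective; then $\tau^+ Y$ is again regular or preinjective (and nonzero), so iterating Proposition~\ref{prop:Coxeter}\eqref{prop:Coxeter.a} gives $c^k(\gr{Y})=\gr{(\tau^+)^k Y}\in K_0^+(\cat A)$, and summing over the summands of $X$ yields the claim. The only external input here is the standard fact (\cite{ARS}) that $\tau^+$ preserves regularity and preinjectivity and that injectives are never projective in infinite type.

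The second ingredient is that a preprojective class eventually becomes anti-effective: by Proposition~\ref{prop:Coxeter}\eqref{prop:Coxeter.c} there is an $r>0$ with $c^r(\gr{M})\in -K_0^+(\cat A)$. Concretely, for an indecomposable preprojective this happens as soon as $r$ exceeds $\delta^+$, since $c^{\delta^+}(\gr{M})$ is the class of a projective $P$, then $c^{\delta^+ +1}(\gr{M})=c(\gr{P})=-\gr{I}$ is anti-effective, and it stays anti-effective by the first ingredient applied to the injective $I$; taking the maximum of $\delta^+$ over the summands of $M$ then produces a single $r$ for all of $M$.

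Putting the two together, for this $r$ one gets simultaneously $c^r(\gamma)\in -K_0^+(\cat A)$ from $\gamma=\gr{M}$ and $c^r(\gamma)\in K_0^+(\cat A)$ from $\gamma=\gr{X}$. Since $\{\gr{S_i}\}$ is a basis of $K_0(\cat A)$ in which every effective class has non-negative coordinates, $K_0^+(\cat A)\cap(-K_0^+(\cat A))=0$, whence $c^r(\gamma)=0$ and, $c$ being invertible, $\gamma=0$. The step I expect to be the crux is the first ingredient: one must be certain that forward iteration of $\tau^+$ never leaves the regular/preinjective world and never lands on a projective, which is precisely where the hypothesis that $\cat A$ is not of finite type is used.
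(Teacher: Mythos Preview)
Your proposal is correct and follows essentially the same approach as the paper's proof: both argue that for $\gamma$ in the intersection, some positive power $c^r(\gamma)$ lies in $-K_0^+(\cat A)$ (via Proposition~\ref{prop:Coxeter}\eqref{prop:Coxeter.c} applied to the preprojective realization) while every $c^s(\gamma)$ lies in $K_0^+(\cat A)$ (via Proposition~\ref{prop:Coxeter}\eqref{prop:Coxeter.a} applied to the realization without preprojective summands), forcing $\gamma=0$. You simply spell out in more detail why iterates of $\tau^+$ keep a regular or preinjective indecomposable non-projective in infinite type, which the paper compresses into a bare citation of Proposition~\ref{prop:Coxeter}.
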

\begin{proof}
Take $\gamma\in K_0^+(\cat A_-)\cap K_0^+(\cat A_{\ge 0})$.
Since $\gamma\in K_0^+(\cat A_-)$, by Proposition~\ref{prop:Coxeter}\eqref{prop:Coxeter.c} there exists~$r>0$ such that $c^r(\gamma)\in -K_0^+(\cat A)$. 
On the other hand, again by Proposition~\ref{prop:Coxeter}, since~$\gamma\in K_0^+(\cat A_{\ge 0})$, we have $c^s(\gamma)\in K_0^+(\cat A)$ for all~$s\ge 0$ hence 
$c^r(\gamma)\in K_0^+(\cat A)\cap (-K_0^+(\cat A))=0$. Since~$c$ is an automorphism, $\gamma=0$. The second statement 
is proved similarly.
\end{proof}

Now we have all ingredients to prove Corollary~\ref{cor:compalg}. Recall that we denote by~$C_{\cat A}$ the composition algebra of~$\cat A$
and by $E_{\cat A}$ the subalgebra of~$H_{\cat A}$ generated by the homogeneous components of~$\Exp_{\cat A}$.
Note that in this case~$C_{\cat A}=E_{\cat A}$ by~\eqref{eq:2-fact}.
\begin{proposition}\label{prop:compalg}
\begin{enumerate}[{\rm(a)}]
 \item\label{prop:compalg.a} $H_{\cat A_\pm}\subset C_{\cat A}$;
\item\label{prop:compalg.b} $\Exp_{\cat A_0}|_\gamma\in C_{\cat A}$ for all $\gamma\in K_0(\cat A)_0$.
\end{enumerate}
\end{proposition}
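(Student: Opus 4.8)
The plan is to reduce Proposition~\ref{prop:compalg} to the identity $C_{\cat A}=E_{\cat A}$ (which holds by~\eqref{eq:2-fact}) together with the two structural lemmas on exponentials, Lemma~\ref{lem:comp-exp} and Lemma~\ref{lem:comp-exp-1}. For part~\eqref{prop:compalg.a} I would factor the desired inclusion as $H_{\cat A_\pm}=E_{\cat A_\pm}\subset E_{\cat A}=C_{\cat A}$, so that it suffices to prove the two links separately. Throughout I work in the ambient setting of this subsection, where $\cat A$ is of infinite type, so that Proposition~\ref{prop:factor} and Lemma~\ref{lem:monoid-split} are available.

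First I would establish $H_{\cat A_\pm}=E_{\cat A_\pm}$ by verifying the hypotheses of Lemma~\ref{lem:comp-exp-1} for $\cat B=\cat A_-$ and $\cat B=\cat A_+$. Each is $\add_{\cat A}$ of a set of indecomposables, hence closed under extensions and direct summands. Condition~$1^\circ$ is precisely Proposition~\ref{prop:preinj-preproj}\eqref{prop:preinj-preproj.g}, which says that an indecomposable preprojective (respectively preinjective) is determined up to isomorphism by its class in $K_0(\cat A)$. For condition~$2^\circ$ I would write an arbitrary object of $\cat A_-$ as $\bigoplus_i P_i^{\oplus a_i}$ with the $P_i$ indecomposable preprojective, numbered in the normal order of Lemma~\ref{lem:norm-ord}; exactly as in the proof of Proposition~\ref{prop:factor}, Corollary~\ref{cor:deg-Rie}\eqref{cor:deg-Rie.a} gives $[\bigoplus_i P_i^{\oplus a_i}]=\prod_i[P_i^{\oplus a_i}]$, and Corollary~\ref{cor:deg-Rie}\eqref{cor:deg-Rie.b} (valid since each $P_i$ is an exceptional brick by Proposition~\ref{prop:preinj-preproj}\eqref{prop:preinj-preproj.f}) rewrites each $[P_i^{\oplus a_i}]$ as a nonzero scalar times $[P_i]^{a_i}$. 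Thus $H_{\cat A_-}$ is generated by its indecomposables, and symmetrically for $\cat A_+$, so Lemma~\ref{lem:comp-exp-1} yields $H_{\cat A_\pm}=E_{\cat A_\pm}$.

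Next I would prove $E_{\cat A_\pm}\subset E_{\cat A}$ via Lemma~\ref{lem:comp-exp}, feeding it the factorizing pairs $(\cat A_-,\cat A_{\ge 0})$ and $(\cat A_{\le 0},\cat A_+)$. Both are factorizing pairs by Proposition~\ref{prop:decomp}\eqref{prop:decomp.b}: indeed $\cat A=\cat A_-\bigvee\cat A_{\ge 0}=\cat A_{\le 0}\bigvee\cat A_+$, and the required $\Ext^1$-vanishing ($\cat A_{\ge 0}\subset\cat A_-^\rort$ and $\cat A_+\subset\cat A_{\le 0}^\rort$) and $\Hom$-vanishing between blocks are exactly the relations listed in Proposition~\ref{prop:preinj-preproj}. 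The disjointness hypothesis of Lemma~\ref{lem:comp-exp}, namely $K_0^+(\cat A_-)\cap K_0^+(\cat A_{\ge 0})=0$ and $K_0^+(\cat A_{\le 0})\cap K_0^+(\cat A_+)=0$, is precisely Lemma~\ref{lem:monoid-split}. Applying Lemma~\ref{lem:comp-exp} to the first pair gives $E_{\cat A_-}\subset E_{\cat A}$ and to the second gives $E_{\cat A_+}\subset E_{\cat A}$. Combined with the previous paragraph, $H_{\cat A_\pm}=E_{\cat A_\pm}\subset E_{\cat A}=C_{\cat A}$, which proves part~\eqref{prop:compalg.a} (and hence Corollary~\ref{cor:compalg}, since $[E_\gamma]\in H_{\cat A_-}$ for $\gamma\in\Gamma_-$ and $[E_\gamma]\in H_{\cat A_+}$ for $\gamma\in\Gamma_+$).

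For part~\eqref{prop:compalg.b} I would induct on the poset $K_0^+(\cat A_0)$, in the spirit of the proof of Lemma~\ref{lem:comp-exp} (the case $\gamma\notin K_0^+(\cat A_0)$ being trivial, as then $\Exp_{\cat A_0}|_\gamma=0$). The triple factorization $\Exp_{\cat A}=\Exp_{\cat A_-}\Exp_{\cat A_0}\Exp_{\cat A_+}$ of Proposition~\ref{prop:factor} gives, for $\gamma\in K_0^+(\cat A_0)$,
$$
\Exp_{\cat A}|_\gamma=\sum_{\gamma_-+\gamma_0+\gamma_+=\gamma}(\Exp_{\cat A_-}|_{\gamma_-})(\Exp_{\cat A_0}|_{\gamma_0})(\Exp_{\cat A_+}|_{\gamma_+}),\qquad \gamma_\bullet\in K_0^+(\cat A_\bullet).
$$
The term with $(\gamma_-,\gamma_0,\gamma_+)=(0,\gamma,0)$ equals $\Exp_{\cat A_0}|_\gamma$, while in every other term $\gamma-\gamma_0=\gamma_-+\gamma_+$ is a nonzero element of $K_0^+(\cat A)$, forcing $\gamma_0<\gamma$. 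Solving for $\Exp_{\cat A_0}|_\gamma$ and using that $\Exp_{\cat A}|_\gamma\in E_{\cat A}=C_{\cat A}$, that each $\Exp_{\cat A_\pm}|_{\gamma_\pm}\in H_{\cat A_\pm}\subset C_{\cat A}$ by part~\eqref{prop:compalg.a}, and that each $\Exp_{\cat A_0}|_{\gamma_0}\in C_{\cat A}$ for $\gamma_0<\gamma$ by the inductive hypothesis, we conclude $\Exp_{\cat A_0}|_\gamma\in C_{\cat A}$; the base case $\gamma=0$ gives $\Exp_{\cat A_0}|_0=1$. Since the heavy lifting is carried out by the lemmas already in place, I expect no serious obstacle; the points requiring genuine care are the verification of condition~$2^\circ$ in the first step and the correct choice of factorizing pairs that make Lemma~\ref{lem:comp-exp} and Lemma~\ref{lem:monoid-split} applicable.
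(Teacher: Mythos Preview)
Your argument for part~\eqref{prop:compalg.a} is essentially the paper's: factor the inclusion as $H_{\cat A_\pm}=E_{\cat A_\pm}\subset E_{\cat A}=C_{\cat A}$, use Lemma~\ref{lem:comp-exp-1} (with Proposition~\ref{prop:preinj-preproj}\eqref{prop:preinj-preproj.f},\eqref{prop:preinj-preproj.g} and Corollary~\ref{cor:deg-Rie}) for the first equality, and Lemma~\ref{lem:comp-exp} fed by the factorizing pairs $(\cat A_-,\cat A_{\ge 0})$, $(\cat A_{\le 0},\cat A_+)$ together with Lemma~\ref{lem:monoid-split} for the second inclusion. One small omission: you restrict throughout to infinite type, but the statement covers finite type as well. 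The paper dispatches that case in one line, noting that then $\cat A_\pm=\cat A$, so $E_{\cat A_\pm}=E_{\cat A}=C_{\cat A}$ trivially, after which Lemma~\ref{lem:comp-exp-1} still gives $H_{\cat A_\pm}=E_{\cat A_\pm}$ (and part~\eqref{prop:compalg.b} is vacuous since $\cat A_0=0$).

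For part~\eqref{prop:compalg.b} your induction is correct, but the paper takes a shorter route: from the triple factorization one simply writes
\[
\Exp_{\cat A_0}=\Exp_{\cat A_-}{}^{-1}\,\Exp_{\cat A}\,\Exp_{\cat A_+}{}^{-1}
\]
in~$\widehat H_{\cat A}$ and projects to degree~$\gamma$. Each homogeneous component of $\Exp_{\cat A_\pm}{}^{-1}$ lies in $H_{\cat A_\pm}\subset C_{\cat A}$ by part~\eqref{prop:compalg.a}, and each component of $\Exp_{\cat A}$ lies in $E_{\cat A}=C_{\cat A}$, so the (finite) sum giving $\Exp_{\cat A_0}|_\gamma$ is in~$C_{\cat A}$ without any induction. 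Your approach unwinds the same identity degree by degree; it works, but the inversion avoids the recursive bookkeeping entirely.
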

\begin{proof}
If~$\cat A$ has finitely many indecomposables, $\cat A=\cat A_\pm$ and so $E_{\cat A_\pm}=E_{\cat A}=C_{\cat A}$. Otherwise,
$(\cat A_-,\cat A_{\ge 0})$ 
and $(\cat A_{\le 0},\cat A_+)$ 
are factorizing pairs for~$\cat A$ by Proposition~\ref{prop:factor}. Then by Lemmata~\ref{lem:monoid-split} and~\ref{lem:comp-exp}, $E_{\cat A_\pm}\subset E_{\cat A}=C_{\cat A}$.

Proposition~\ref{prop:preinj-preproj}(\ref{prop:preinj-preproj.f},\ref{prop:preinj-preproj.g}) and Corollary~\ref{cor:deg-Rie}\eqref{cor:deg-Rie.b}
imply that $\cat A_+$ and~$\cat A_-$ satisfy the assumptions of Lemma~\ref{lem:comp-exp-1}, hence~$H_{\cat A_\pm}=E_{\cat A_\pm}$ and thus are subalgebras of~$C_{\cat A}$.
This proves~\eqref{prop:compalg.a}.

To prove~\eqref{prop:compalg.b}, it remains to observe that by~\eqref{eq:2-fact}
\begin{equation*}
\Exp_{\cat A_0}=\Exp_{\cat A_-}{}^{-1} \Exp_{\cat A}\Exp_{\cat A_+}{}^{-1}.\qedhere
\end{equation*}
\end{proof}

\section{Examples}\label{sec:examples}

\subsection{}\label{sec:k-spec}

We will now discuss the case of a hereditary acyclic category with only two non-isomorphic simples in more detail. Thus,
$\mathcal E_A$ is the valued graph $1\xrightarrow{(a_1,a_0)} 0$, $a_0a_1>0$.
Then $S_1=I_1$, $S_0=P_0$, $\gr{P_1}=\alpha_{10}=\alpha_1+a_0\alpha_0$ and $\gr{I_0}=\alpha_{01}=a_1\alpha_1+\alpha_0$. We 
have $$\widetilde{\cat P}(\cat A)=\{[P_{2n+i}]\}_{0\le n\le \delta^-(P_i),i\in I},\qquad 
\widetilde{\cat I}(\cat A)=\{[I_{2n+i}]\}_{0\le n\le\delta^+(I_i),i\in I}
$$ 
where 
$P_{2n+i}=(\tau^-)^n(P_i)$ and $I_{2n+i}=(\tau^+)^n I_i$. In this case, it is possible to describe 
the images of preinjectives and preprojectives in~$K_0(\cat A)$ very explicitly via a rather simple recursion.
Consider the Auslander-Reiten quiver of the preprojective component (\cite{ARS}*{Propositions~VIII.1.15--16})
$$
\xymatrix@C=2ex@R=2em{&[P_1]\ar[rd]^{(a_0,a_1)}&&[P_3]\ar@{-->}[ll]\ar[rd]^{(a_0,a_1)}&&\cdots\ar@{-->}[ll]\\[P_0]\ar[ru]^{(a_1,a_0)}&&[P_2]\ar@{-->}[ll]\ar[ru]_{(a_1,a_0)}&&
[P_4]\ar@{-->}[ll]\ar[ru]&&\cdots\ar@{-->}[ll]}
$$
where the dashed arrows denote the Auslander-Reiten translation.
Thus, for all~$k\ge 0$, we have short exact sequences 
$$
0\to P_{2k}\to P_{2k+1}^{\oplus a_1}\to P_{2k+2}\to 0,\qquad 0\to P_{2k+1}\to P_{2k+2}^{\oplus a_0}\to P_{2k+3}\to 0,
$$
whence $|P_{r+1}|+|P_{r-1}|=a_r |P_r|$, $r\ge 1$, where $a_r=a_{r\pmod 2}$. Similar considerations yield a recursion $|I_{r+1}|+|I_{r-1}|=a_r |I_r|$, $r\ge 1$.
Combining these, we obtain
\begin{lemma}
Let~$\beta_0=\alpha_0$, $\beta_{-1}=-\alpha_1$ and define $\beta_n$, $n\in\mathbb Z\setminus\{-1,0\}$ by
$$
\beta_{n+1}+\beta_{n-1}=a_n\beta_n.
$$
Then
$$
\beta_{2r}=\begin{cases}
             \gr{P_{2r}},& 0\le r< \delta^-(P_0)\\
-\gr{I_{-2r+2}},&-\delta^+(I_0)\le r<0
            \end{cases},
\qquad 
\beta_{2r+1}=\begin{cases}
               \gr{P_{2r+1}},&0\le r<\delta^-(P_1)\\
-\gr{I_{-2r-1}},&-\delta^+(I_1)\le r<0.
              \end{cases}
$$
\end{lemma}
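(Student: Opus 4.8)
The plan is to recognise $(\beta_n)_{n\in\mathbb Z}$ as a single closed-form orbit of the Coxeter transformation $c$ and then to decode that orbit into preprojective and preinjective labels. Since $\beta_{n+1}=a_n\beta_n-\beta_{n-1}$ is a two-term linear recursion, the whole sequence is determined by the two consecutive values $\beta_{-1}=-\alpha_1$ and $\beta_0=\alpha_0$; it therefore suffices to exhibit \emph{any} $\mathbb Z$-indexed family satisfying the same recursion and agreeing at $n=-1,0$. I claim that family is
\[
\beta_{2r}=c^{-r}\gr{P_0},\qquad \beta_{2r+1}=c^{-r}\gr{P_1}\qquad(r\in\mathbb Z),
\]
equivalently that $c$ acts on the sequence as the shift $c(\beta_n)=\beta_{n-2}$.

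To verify this, first note $\beta_0=c^{0}\gr{P_0}=\alpha_0$ and $\beta_{-1}=c\,\gr{P_1}=-\gr{I_1}=-\alpha_1$, using $S_1=I_1$ and the defining relation $c\gr{P_i}=-\gr{I_i}$; thus the initial data match. Substituting the proposed family into the recursion and stripping the common power of $c$ (an automorphism), the even-index case reduces to $\gr{P_1}+c\gr{P_1}=a_0\gr{P_0}$ and the odd-index case to $\gr{P_0}+c^{-1}\gr{P_0}=a_1\gr{P_1}$. The first is immediate from $c\gr{P_1}=-\gr{I_1}=-\alpha_1$ together with $\gr{P_1}=\alpha_{10}=\alpha_1+a_0\alpha_0$; the second is exactly the Auslander--Reiten relation $\gr{P_2}+\gr{P_0}=a_1\gr{P_1}$ recorded above, since $c^{-1}\gr{P_0}=\gr{\tau^-P_0}=\gr{P_2}$ by Proposition~\ref{prop:Coxeter}\eqref{prop:Coxeter.b}. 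By uniqueness the closed form then holds for all $n$.

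It remains to translate the powers of $c$ into the labels $\gr{P_m}$ and $\gr{I_m}$. For $r\ge 0$, Proposition~\ref{prop:Coxeter}\eqref{prop:Coxeter.b} gives $c^{-r}\gr{P_i}=\gr{(\tau^-)^rP_i}=\gr{P_{2r+i}}$, valid as long as $P_i,\tau^-P_i,\dots,(\tau^-)^{r-1}P_i$ are all non-injective, that is exactly while $r<\delta^-(P_i)$; this yields the two preprojective cases. For $r<0$ write $r=-(m+1)$ with $m\ge 0$ and peel off one factor: $c^{-r}\gr{P_i}=c^{m}\bigl(c\gr{P_i}\bigr)=-c^{m}\gr{I_i}=-\gr{(\tau^+)^mI_i}=-\gr{I_{2m+i}}$ by Proposition~\ref{prop:Coxeter}\eqref{prop:Coxeter.a}, valid while $(\tau^+)^jI_i$ stays non-projective, i.e.\ while $m<\delta^+(I_i)$, which is precisely $-\delta^+(I_i)\le r<0$; since $2m+i=-2r-2+i$ this produces the two preinjective cases. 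The only delicate point is this range bookkeeping: the identifications $c^{-1}=\tau^-$ and $c=\tau^+$ on classes hold only on non-injective, respectively non-projective, indecomposables, so a preprojective branch must be stopped exactly when it would turn injective (and dually), the thresholds $\delta^\mp$ being tuned to this interface. Their finiteness, and hence the finite-type versus infinite-type dichotomy that governs where the two halves meet, is guaranteed by Proposition~\ref{prop:preinj-preproj}\eqref{prop:preinj-preproj.a}.
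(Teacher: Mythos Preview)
Your argument is correct and takes a somewhat different route from the paper. The paper does not give a separate proof; it simply records, from the Auslander--Reiten quiver, the almost-split sequences
\[
0\to P_{2k}\to P_{2k+1}^{\oplus a_1}\to P_{2k+2}\to 0,\qquad 0\to P_{2k+1}\to P_{2k+2}^{\oplus a_0}\to P_{2k+3}\to 0
\]
(and their injective analogues), reads off the recursions $\gr{P_{r+1}}+\gr{P_{r-1}}=a_r\gr{P_r}$ and $\gr{I_{r+1}}+\gr{I_{r-1}}=a_r\gr{I_r}$ for $r\ge 1$, and then says ``combining these'' to splice the two half-sequences at the initial values. Your approach instead identifies the entire $\mathbb Z$-indexed family in one stroke as a Coxeter orbit, $\beta_{2r+i}=c^{-r}\gr{P_i}$, and checks the recursion only once at each parity before invoking Proposition~\ref{prop:Coxeter} to translate powers of $c$ into $\tau^\pm$-iterates. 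This is more conceptual: the glueing of the preprojective and preinjective halves is automatic, since it is just the sign change of the exponent of $c$, rather than something to be verified by hand. The cost is that you still need one AR relation ($\gr{P_0}+\gr{P_2}=a_1\gr{P_1}$) and the Coxeter machinery of \S\ref{sec:Coxeter}, whereas the paper's argument stays entirely within the AR combinatorics.

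One remark: your computation gives the preinjective index as $2m+i=-2r-2+i$, so for $i=0$ you obtain $\beta_{2r}=-\gr{I_{-2r-2}}$, whereas the lemma as stated has $-\gr{I_{-2r+2}}$. A direct check (e.g.\ $\beta_{-2}=a_1\beta_{-1}-\beta_0=-\alpha_0-a_1\alpha_1=-\gr{I_0}$, not $-\gr{I_4}$) confirms that your index is the correct one and the displayed $-2r+2$ is a typo; you might flag this explicitly rather than silently passing over it. The closing sentence about Proposition~\ref{prop:preinj-preproj}\eqref{prop:preinj-preproj.a} is also slightly misplaced: that result gives finiteness of level sets of $\delta^\pm$, not finiteness of $\delta^\pm(P_i)$ itself, which can well be infinite here.
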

\noindent
Thus, we can identify $\widetilde{\cat P}(\cat A)$ with $\{\beta_r\,:\, r\ge 0\}\subset K_0(\cat A)$ and
$\widetilde{\cat I}(\cat A)$ with $\{-\beta_r\,:\, r<0\}\subset K_0(\cat A)$.

It is not hard to write an explicit formula for~$\beta_r$, $r\in\mathbb Z$. Let~$U_n$ be the Chebyshev polynomial of the second kind
$$
U_n(t)=2t U_{n-1}-U_{n-2},\qquad n\ge 1,\qquad U_{-1}=0,\quad U_0=1
$$
and set $\lambda_n(t)=U_{n-1}(t/2-1)$, $\mu_n(t)=U_n(t/2-1)+U_{n-1}(t/2-1)$. Then
$$
\beta_r=a_{r+1} \lambda_{\lfloor(r+1)/2\rfloor}(a_0a_1)\alpha_{r+1}+\mu_{\lfloor r/2\rfloor}(a_0a_1)\alpha_{r},\qquad r\in\mathbb Z,
$$
where~$\alpha_k=\alpha_{k\pmod 2}$.

If $r=s\pmod 2$, $r\le s$ we have (in this case $\End P_r\cong\End P_s$)
\begin{align*}
&\dim_{\End P_r}\Hom_{\cat A}(P_r,P_s)=\mu_{(s-r)/2}(a_0a_1)\\
&\dim_{\End P_r}\Ext^1_{\cat A}(P_s,P_r)=\mu_{(s-r)/2-1}(a_0a_1).
\end{align*}
while for $r<s$ with $r=s+1\pmod 2$ 
\begin{alignat*}{2}
&\dim_{\End P_r}\Hom_{\cat A}(P_r,P_s)=a_r \lambda_{(s+1-r)/2}(a_0a_1),&\quad&\dim_{\End P_s}\Hom_{\cat A}(P_r,P_s)=a_s \lambda_{(s+1-r)/2}(a_0a_1)\\
&\dim_{\End P_r}\Ext^1_{\cat A}(P_s,P_r)=a_r \lambda_{(s-r-1)/2}(a_0a_1),&\quad&\dim_{\End P_s}\Hom_{\cat A}(P_r,P_s)=a_s \lambda_{(s-r-1)/2}(a_0a_1).
\end{alignat*}
where we used~\eqref{eq:dim-Hom}, \eqref{eq:dim-Ext}.
This, together with~\eqref{eq:fund-rel} and~\eqref{eq:fund-rel'}, allows us to write the commutation relations among
all preprojective objects. Namely, if $r\le s$ and $r=s\pmod 2$
\begin{equation}\label{eq:fund-rel-preproj}
\sum_{j=0}^{x+1} (-1)^j q_r^{\binom{j}2+jy}\qbinom[q_r]{x+1}{j} {[P_r]}^j {[P_s]}{[P_r]}^{x+1-j}=0=
\sum_{j=0}^{x+1} (-1)^j q_r^{\binom{j}2-jy}\qbinom[q_r]{x+1}{j} {[P_s]}^{x+1-j} {[P_r]}{[P_s]}^j
\end{equation}
where $x=\mu_{\frac12(s-r)-1}(a_0a_1)$ and~$y=\mu_{(s-r)/2}(a_0a_1)$. If~$r<s$ and $s=r+1\pmod 2$,
\begin{multline}\label{eq:fund-rel-preproj'}
\sum_{j=0}^{a_r z+1} (-1)^j q_r^{\binom{j}2+ja_r w}\qbinom[q_r]{a_r z+1}{j} {[P_r]}^j {[P_s]}{[P_r]}^{a_r z+1-j}=0\\=
\sum_{j=0}^{a_s z+1} (-1)^j q_s^{\binom{j}2-ja_s w}\qbinom[q_s]{a_s z+1}{j} {[P_s]}^{a_s z+1-j} {[P_r]}{[P_s]}^j,
\end{multline}
where $z=\lambda_{(s-r-1)/2}(a_0a_1)$ and~$w=\lambda_{(s-r+1)/2}(a_0a_1)$.

Both sets $\widetilde{\cat P}(\cat A)$, $\widetilde{\cat I}(\cat A)$ are finite (and hence~$\Ind\cat A$ 
is finite)
if and only if~$a_0a_1<4$. The normal order on~$\widetilde{\cat P}(\cat A)$ 
(respectively, $\widetilde{\cat I}(\cat A)$) is given by $\beta_r\prec \beta_s$ (respectively, $\beta_{-s}\prec\beta_{-r}$)
if~$r<s$. If~$a_0a_1\ge 4$, the identity
\eqref{eq:2-fact} can be written as follows
$$
\Big(\overset{\rightarrow}{\displaystyle\prod_{r>0}}\Exp_{q_r}(E_{\beta_r})\Big)
\Exp_{\cat A_0} \Big(\overset{\leftarrow}{\displaystyle\prod_{r>1}}\Exp_{q_r}(E_{-\beta_{-r}})\Big)=
[\Exp_{q_0}(E_{\alpha_0}),\Exp_{q_1}(E_{\alpha_1})]
$$
where $[a,b]=a^{-1}bab^{-1}$.

If~$a_0a_1<4$, we have $\cat A_+=\cat A_-=\cat A$ and 
$$
\Exp_{\cat A}=\overset{\rightarrow}{\displaystyle\prod_{r\ge 0}}\Exp_{q_r}(E_{\beta_r})=
\overset{\leftarrow}{\displaystyle\prod_{r> 0}}\Exp_{q_r}(E_{-\beta_{-r}})
$$
(both products are finite)
which yields
$$
\overset{\rightarrow}{\displaystyle\prod_{r>0}}\Exp_{q_r}(E_{\beta_r})=[\Exp_{q_0}(E_{\alpha_0}),\Exp_{q_1}(E_{\alpha_1})].
$$
More explicitly, we have 
$$
[\exp_{q_0}(E_{\alpha_0}),\exp_{q_1}(E_{\alpha_1})]=\begin{cases}\exp_{q_1}(E_{\alpha_{10}}),&a_0=a_1=1\\
\exp_{q_1}(E_{\alpha_{10}})\exp_{q_0}(E_{\alpha_{01}}),
&a_0a_1=2\\
\exp_{q_1}(E_{\alpha_{10}})
\exp_{q_0}(E_{\alpha_{01}+\alpha_0})\exp_{q_1}(E_{\alpha_1+\alpha_{10}})\exp_{q_0}(E_{\alpha_{01}}),& a_0a_1=3.
\end{cases}
$$
It should be noted that these Chevalley-type relations hold in higher ranks.

\subsection{}\label{sec:ex-pent}
Consider the category $\cat A$ of~$\kk$-representations of the quiver
$$
1\to 2\to\cdots\to n.
$$
If~$\alpha_i=\gr{S_i}$, then isomorphism classes of indecomposable objects are uniquely determined by 
their images in $K_0(\cat A)$ which are given by $\alpha_{i,j}=\sum_{k=i}^j\alpha_k$. Denote the corresponding 
indecomposable by $E_{i,j}$; in particular, $E_{i,i}=S_i$. Then 
$$
\dim_\kk\Hom_{\cat A}(E_{j,k},S_i)=\delta_{i,j},\qquad \dim_{\kk}\Hom_{\cat A}(S_i,E_{j,k})=\delta_{i,k}
$$
and
$$
\dim_{\kk}\Ext^1_{\cat A}(S_i,E_{j,k})=\delta_{i,j-1},\qquad \dim_{\kk}\Ext^1_{\cat A}(E_{j,k},S_i)=\delta_{i,k+1}.
$$
Fix~$1\le i<n$ and let~$\cat A_+$ (respectively, $\cat A_-$) be the full subcategory of~$\cat A$ whose objects 
satisfy $[M:S_j]=0$ if~$j\le i$ (respectively, $j>i$). It is immediate from Corollary~\ref{cor:sorder} that 
$(\cat A_+,\cat A_-)$ is a factorizing pair. We claim that it in fact a pentagonal pair. Indeed, clearly 
$\cat A_-\subset \cat A_+{}^{\lort}$. Moreover, if~$M$ is an indecomposable object that is not in 
$\cat A_-\bigvee \cat A_+$, then $M\cong E_{j,k}$ with either~$j\le i$ or~$k>i$. In particular,
every such object is in~$\cat A_0=\cat A_-^{\rort}\cap \cat A_+^{\lort}$ and so $\cat A=\cat A_-\bigvee
\cat A_0\bigvee \cat A_+$. It is now easy to see, using Proposition~\ref{prop:decomp}, that $(\cat A_-,\cat A_0,\cat A_+)$ is a factorizing triple
and hence $(\cat A_+,\cat A_-)$ is a pentagonal pair.

Also, let~$E=E_{\alpha_{ij}}$. Then $\cat A^0_E=\add_{\cat A}\{[\alpha_{k,j}]\,:\,k<j\}$,
$\cat A^<_E=\add_{\cat A}\{[\alpha_{rs}]\,:\, s>j\}$ and finally $\cat A^>_E=\add_{\cat A}\{[\alpha_{rs}]\,:\,s<j\}$.

\subsection{}
We now discuss a non-hereditary example. Consider the following quiver
$$
\let\objectstyle\scriptstyle
\xymatrix@=3ex{&1\ar[ld]_{a_{12}}\ar[rd]^{a_{13}}\\2\ar[rd]_{a_{24}}&&3\ar[ld]^{a_{34}}\\&4}
$$
with relation $a_{34}a_{13}=a_{24}a_{12}$. Let~$\cat A$ be the category of finite dimensional 
representations of that quiver over~$\kk$ with $|\kk|=q$. Denote~$\alpha_i=|S_i|$, $1\le i\le 4$. 
The isomorphism classes of 
indecomposables in~$\cat A$ are uniquely determined by their images in~$K_0(\cat A)$.
The Auslander-Reiten quiver of~$\cat A$  is (cf.~\cite{ARS}*{\S VII.2})
$$
\let\objectstyle\scriptstyle
\xymatrix@C=-.1em@R=3.5ex{&&&\alpha_1+\alpha_2+\alpha_3+\alpha_4\ar[rdd]\\&\alpha_2+\alpha_4\ar[rd]&&\alpha_3\ar[rd]\ar@{-->}[ll]&&\alpha_1+\alpha_2\ar[rd]\ar@{-->}[ll]\\
\alpha_4\ar[rd]\ar[ru]&&\alpha_2+\alpha_3+\alpha_4\ar@{-->}[ll]\ar[rd]\ar[ru]\ar[ruu]&&\alpha_1+\alpha_2+\alpha_3\ar[ru]\ar[rd]\ar@{-->}[ll]&&\alpha_1\ar@{-->}[ll]\\
&\alpha_3+\alpha_4\ar[ru]&&\alpha_2\ar@{-->}[ll]\ar[ru]&&\alpha_1+\alpha_3\ar@{-->}[ll]\ar[ru]}
$$
where dashed arrows denote the Auslander-Reiten translation. Then we have
\begin{multline*}
\exp_q(E_4)\exp_q(E_{24})\exp_q(E_{34})\exp_q(E_{234})\exp_q(E_2)\exp_q(E_3)\exp_q(E_{1234})\exp_q(E_{123})\times\\
\exp_q(E_{12})\exp_q(E_{13})\exp_q(E_1)=
\exp_q(E_1)\exp_q(E_2)\exp_q(E_3)\exp_q(E_4)
\end{multline*}
where $E_{i_1i_2...}$ denotes $[\alpha_{i_1}+\alpha_{i_2}+\cdots]$. In particular, if 
we set~$\cat A_+=\add_{\cat A}S_1$ and let $\cat A_-$ be the largest full subcategory of~$\cat A$
whose objects satisfy $[M:S_1]=0$ then $(\cat A_+,\cat A_-)$ is a pentagonal pair by Proposition~\ref{prop:decomp} (note
that $\cat A_-$ is equivalent to the category of $\kk$-representations of the quiver~$2\to 4\leftarrow 3$).

Consider now the same quiver
$$
\let\objectstyle\scriptstyle
\xymatrix@=3ex{&1\ar[ld]_{a_{12}}\ar[rd]^{a_{13}}\\2\ar[rd]_{a_{24}}&&3\ar[ld]^{a_{34}}\\&4}
$$
but this time with the relation $a_{24}a_{12}=0$. The Auslander-Reiten quiver in this case is 
$$
\let\objectstyle\scriptstyle
\xymatrix@C=-.5em@R=2ex{&\alpha_2+\alpha_4\ar[rd]&&\alpha_3\ar@{-->}[ll]\ar[dr]&&\alpha_1+\alpha_2\ar@{-->}[ll]\ar[rd]\\\alpha_4\ar[ur]\ar[rd]&&\alpha_2+\alpha_3+\alpha_4\ar[rd]\ar@{-->}[ll]\ar[ur]&&\alpha_1+\alpha_2+\alpha_3\ar[rd]\ar@{-->}[ll]\ar[ur]&&\alpha_1\ar@{-->}[ll]\\
&\alpha_3+\alpha_4\ar[ru]\ar[rd]&&\alpha_1+2\alpha_2+\alpha_3+\alpha_4\ar[ru]\ar[rd]\ar@{-->}[ll]&&\alpha_1+\alpha_3\ar[ru]\ar@{-->}[ll]\\&&P_1\ar[ru]\ar[rd]&&I_4
\ar[ru]\ar[rd]\ar@{-->}[ll]\\&\alpha_2\ar[ru]&&\alpha_1+\alpha_3+\alpha_4\ar@{-->}[ll]\ar[ru]&&\alpha_2\ar@{-->}[ll]}
$$
where the two copies of~$S_2$ in the lowest row are to be identified and $P_1$ (respectively, $I_4$) is the projective cover 
(respectively, the injective envelope) of~$S_1$ (respectively, $S_4$), are non-isomorphic and 
$|P_1|=|I_4|=\alpha_1+\alpha_2+\alpha_3+\alpha_4$. In this case, no partition of the set of isomorphism classes of simples 
gives a pentagonal pair, since the indecomposable $X$ with~$|X|=\alpha_1+\alpha_3+\alpha_4$ satisfies
$\Ext^1_{\cat A}(X,S_2)\not=0$ and~$\Ext^1_{\cat A}(S_2,X)\not=0$, while it involves all other simples as its composition factors.

\subsection{}
Suppose that we have have an autoequivalence on~$\cat A$ which induces 
a permutation $\sigma$ of the set of isomorphism classes of simples in~$\cat A$ and a
natural action of~$\sigma$ on~$K_0(\cat A)$. Thus, on the level of~$\mathcal E_{\cat A}$,
$\sigma$ must induce an automorphism of valued graphs. 
Then we can consider 
the full subcategory $\cat A^\sigma$ of~$\cat A$ whose objects~$M$ satisfy 
$\gr{M}\in K_0(\cat A)^\sigma$. This category is clearly closed under extensions. 

The simple 
objects in~$\cat A^\sigma$ are of the form $S_{\mathbf i}=\bigoplus_{i\in \mathbf i} S_i$ where $\mathbf i\in \{1,\dots,r\}/\sigma$.
There is a natural source order on the set of $\sigma$-orbits and we get 
$$
\Exp_{\cat A^\sigma}=\Exp_{\cat A^\sigma}(S_{\mathbf i_1})\cdots\Exp_{\cat A^\sigma}(S_{\mathbf i_r}).
$$

\subsection{}\label{sec:Jord}
Let~$R$ be a principal ideal domain such that~$R/\mathfrak m$ is a finite field for every maximal ideal~$\mathfrak m$.
Consider the category~$\cat A$ of $R$-modules of finite length. It is well-known that 
$\cat A=\bigoplus_{\mathfrak m\in\operatorname{Spec}R} \cat A(\mathfrak m)$ where 
$\cat A(\mathfrak m)$ is the full subcategory of~$\cat A$ whose objects are
finite length $R$-modules $M$ with~$\operatorname{Ann}M=\mathfrak m^j$ for some~$j\ge 0$.
Each of the categories $\cat A(\mathfrak m)$ is hereditary, and $\Hom_{\cat A}(M,N)=\Ext^1_{\cat A}(M,N)=0$ if
$M\in\Ob\cat A(\mathfrak m)$, $N\in\Ob\cat A(\mathfrak m')$ with $\mathfrak m\not=\mathfrak m'$.
Thus, 
$$
\Exp_{\cat A}=\prod_{\mathfrak m\in\Spec R} \Exp_{\cat A(\mathfrak m)}.
$$
Fix $\mathfrak m\in\Spec R$. Then for each~$r>0$, there exists a unique indecomposable~$\mathcal I_r:=\mathcal I_r(\mathfrak m)=R/\mathfrak m^r$ of length~$r$, and
for every partition $\lambda=(\lambda_1\ge \lambda_2\ge \cdots)$ there is a unique object $\mathcal I_\lambda=\mathcal I_\lambda(\mathfrak m)=\mathcal I_{\lambda_1}\oplus
\mathcal I_{\lambda_2}\oplus\cdots$.

The Hall algebra of $\cat A(\mathfrak m)$ is in fact very well understood (cf., for example, \cites{Schiffmann,Mac}). It is commutative, is isomorphic
to the Hall algebra of the category of nilpotent finite length modules over $\kk[x]$ where $\kk\cong R/\mathfrak m$
and is the classical Hall-Steiniz algebra. It is freely generated by ${[\mathcal I_{(1^r)}]}={[\mathcal I_1^{\oplus r}]}$.
Then
$$
{\Exp_{\cat A(\mathfrak m)}}^{-1}=\sum_{r\ge 0} (-1)^r q^{\binom{r}{2}} {[\mathcal I_{(1^r)}]},
$$
where $q=|R/\mathfrak m|$.
There is a well-known homomorphism $\Phi:H_{\cat A(\mathfrak m)}\to \operatorname{Sym}$, where $\operatorname{Sym}$ is the ring 
of symmetric polynomials in infinitely many variables
$x_1,x_2,\dots$,
given by $\Phi([\mathcal I_{(1^r)}])\mapsto q^{-\binom{r}2}e_r$ where $e_r=\sum_{1\le i_1<i_2<\cdots<i_r} x_{i_1}\cdots x_{i_r}$ is 
the $r$th elementary symmetric function. Let $\widehat{\operatorname{Sym}}$ be the natural completion of~$\operatorname{Sym}$
and extend $\Phi$ to an isomorphism $\widehat H_{\cat A(\mathfrak m)}\to \widehat{\operatorname{Sym}}$. Then
$$\Phi({\Exp_{\cat A(\mathfrak m)}}^{-1})=
\sum_{r\ge 0} (-1)^r e_r=\prod_{i\ge 1} (1-x_i).
$$
This implies that 
$$
\Phi({\Exp_{\cat A(\mathfrak m)}})=\prod_{i\ge 1}\frac{1}{1-x_i}=\sum_{r\ge 0} h_r,
$$
where $h_r$ is the $r$th complete symmetric function.
Let 
$$
P_\lambda(x_1,\dots,x_r;q)=\prod_{i=0}^r [m_i]_q!^{-1} \sum_{\sigma\in S_r} \sigma\Big( x_1^{\lambda_1}\cdots x_r^{\lambda_r} \prod_{1\le i<j\le r}
\frac{x_i-q x_j}{x_i-x_j}\Big)
$$
where $\lambda=(\lambda_1\ge\cdots\ge \lambda_r\ge 0)$ and $m_i=\#\{ 1\le j\le r\,:\,\lambda_j=i\}$.
Since $\Phi([\mathcal I_{\lambda}])=q^{-n(\lambda)} P_\lambda(x;q^{-1})$,
where $n(\lambda)=\sum_i (i-1)\lambda_i$,
we obtain
$$
\sum_\lambda q^{n(\lambda)} P_\lambda(x;q) \sum_{r\ge 0}(-1)^r e_r=1.
$$
This identity specializes to a well-known identity in the ring of symmetric polynomials in finitely many variables (in which the second sum 
becomes finite).
In particular (cf.~\cite{Mac}),
$$
\sum_{\lambda\vdash r} q^{n(\lambda)} P_\lambda(x;q)=h_r. 
$$

It is not hard to see that
$$
|\Aut(\mathcal I_\lambda)|=q^{\sum_{i,j} (\min(i,j)-\delta_{ij})a_i a_j} \prod_{i} |\GL(a_i,\kk)|
$$
where $\lambda=(1^{a_1}2^{a_2}\cdots)$. Let $a_\lambda(q)=|\Aut(\mathcal I_\lambda)|$. Then
the homomorphism $\int:\widehat H_{\cat A(\mathfrak m)}\to \mathbb Q[[t]]$ is given by 
$$
[\mathcal I_\lambda]\mapsto a_\lambda(q)^{-1} t^{|\lambda|}
$$
and yields 
$$
\Big(\sum_{r\ge 0} t^r \sum_{\lambda\vdash r} a_\lambda(q)^{-1} \Big) \Big(\sum_{s\ge 0} (-1)^s q^{\binom{s}2}\frac{t^s}{|\GL(s,q)|}\Big)=1
$$
or
$$
\sum_{s=0}^r 
\Big(\prod_{j=1}^s (1-q^j)^{-1}\Big)
\sum_{\lambda\vdash r-s} a_\lambda(q)^{-1}=0,\qquad r>0.
$$

\subsection{}
Retain the notation of~\ref{sec:k-spec} and assume that $\cat A$ is of tame representation type.
In that case, $\cat A_0$ is abelian and is a direct sum of countably many abelian subcategories $\cat A_\rho$, known as stable tubes,
since for any $\rho$ there exists $r_\rho\ge 0$ such that 
for any indecomposable $M\in\Ob\cat A_\rho$, $\tau^{r_\rho} M\cong M$. The minimal~$r_\rho$
with that property is called the rank of~$\cat A_\rho$, and it is well-known that~$r_\rho=1$ for all
but finitely many~$\rho$.
Thus,
$$
H_{\cat A_0}\cong \bigotimes_\rho H_{\cat A_\rho}
$$
as an algebra
and moreover 
$$
\Exp_{\cat A_0}=\prod_\rho \Exp_{\cat A_\rho}.
$$
There exists an indecomposable $S_\rho\in\Ob\cat A_\rho$ of minimal length which is a
simple object in~$\cat A_\rho$. In particular, it is a brick. Then all non-isomorphic simple objects in~$\cat A_\rho$ are 
of the form $(\tau^-)^r S_\rho$, $0\le r<r_\rho$. Since $\tau$ is an exact autoequivalence $\cat A_\rho\to\cat A_\rho$,
it induces an automorphism of
$H_{\cat A_\rho}$ of order~$r_\rho$ defined by ${[M]}\mapsto {[\tau M]}$. Thus, we have
$$
\Exp_{\cat A_\rho}{}^{-1}=\sum_{\mathbf a=(a_1,\dots,a_{r_\rho})\in \mathbb Z_{\ge 0}^{r_\rho}}(-1)^{\sum_i a_i} q_\rho^{\sum_i \binom{a_i}{2}}
{[S_\rho(\mathbf a)]}
$$
where $q_\rho=|\End_{\cat A}S_\rho|$ and $S_\rho(\mathbf a)=\bigoplus_{i=1}^{r_\rho} (\tau^i S_\rho)^{\oplus a_i}$.
In particular, if~$r_\rho=1$, $\cat A_\rho$ is equivalent to the category of nilpotent representations of~$\kk[x]$ as 
described in~\ref{sec:Jord}.
\appendix
\section{Coproduct}
\label{sect:Appendix} 

Let $\cat A$ be a finitary category and define a linear map $\Delta:H_{\cat A} \to H_{\cat A}  \widehat \otimes H_{\cat A}$  by:
\begin{equation}
\label{eq:coproductDeltaprim}
\Delta([C])= \sum_{[A],[B]\in {\Iso\cat A}} \frac{|{\Hom_{\cat A}}(A,B)|} {|\Ext^1_{\cat A}(A,B)|}\,  F^{A,B}_C\cdot  [A]\otimes [B] \ ,
\end{equation}
where $F^{A,B}_C$ is the dual Hall number given by 
$$
F^{A,B}_C=\frac{|\Aut_{\cat A}(A)| |\Aut_{\cat A}(B)|}{|\Aut_{\cat A}(C)|} \, F_{A,B}^C
=\frac{|\Ext^1_{\cat A}(A,B)_C|}{|\Hom_{\cat A}(A,B)|}
$$  
where we used~\eqref{eq:Rie}.

\begin{lemma}\label{lem:coprod} We have 
$\Delta(\Exp_{\cat A})=\Exp_{\cat A}\otimes \Exp_{\cat A}$.
\end{lemma}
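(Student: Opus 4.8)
The plan is to compute $\Delta(\Exp_{\cat A})$ head-on by summing the defining formula~\eqref{eq:coproductDeltaprim} for $\Delta([C])$ over all isomorphism classes $[C]$ and interchanging the order of summation. The first step is to simplify the structure constant appearing in~\eqref{eq:coproductDeltaprim}. Substituting the stated value $F^{A,B}_C=|\Ext^1_{\cat A}(A,B)_C|/|\Hom_{\cat A}(A,B)|$, the factor $|\Hom_{\cat A}(A,B)|$ cancels against the one already present, leaving
$$
\Delta([C])=\sum_{[A],[B]\in\Iso\cat A}\frac{|\Ext^1_{\cat A}(A,B)_C|}{|\Ext^1_{\cat A}(A,B)|}\,[A]\tensor[B].
$$

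Next, since $\Exp_{\cat A}=\sum_{[C]\in\Iso\cat A}[C]$ and $\Delta$ is linear, I would sum the above over $[C]$ and swap the summations to obtain
$$
\Delta(\Exp_{\cat A})=\sum_{[A],[B]\in\Iso\cat A}\Big(\sum_{[C]\in\Iso\cat A}\frac{|\Ext^1_{\cat A}(A,B)_C|}{|\Ext^1_{\cat A}(A,B)|}\Big)[A]\tensor[B].
$$
This reduces the whole claim to evaluating the inner sum over $[C]$ for each fixed pair $[A],[B]$. The crucial observation is that, as $[C]$ ranges over $\Iso\cat A$, the sets $\Ext^1_{\cat A}(A,B)_C$ of equivalence classes of short exact sequences $0\to B\to E\to A\to 0$ with $E\cong C$ partition the full extension group $\Ext^1_{\cat A}(A,B)$, because the isomorphism class of the middle term of any extension is uniquely determined. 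Hence $\sum_{[C]}|\Ext^1_{\cat A}(A,B)_C|=|\Ext^1_{\cat A}(A,B)|$, so the inner sum equals $1$ for every $[A],[B]$, and we are left with
$$
\Delta(\Exp_{\cat A})=\sum_{[A],[B]\in\Iso\cat A}[A]\tensor[B]=\Big(\sum_{[A]}[A]\Big)\tensor\Big(\sum_{[B]}[B]\Big)=\Exp_{\cat A}\tensor\Exp_{\cat A}.
$$

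The only point requiring care is the legitimacy of interchanging the two infinite summations, and this is exactly what the passage to the completed tensor product $H_{\cat A}\widehat\tensor H_{\cat A}$ is designed to accommodate: in each fixed bidegree component only finitely many $[C]$ contribute, since the middle term of an extension of $A$ by $B$ has class $|A|+|B|$ in $K_0(\cat A)$. Thus the only genuine obstacle is this bookkeeping about convergence in the completion; the algebraic content is entirely the partition identity for $\Ext^1$, after which the computation collapses immediately.
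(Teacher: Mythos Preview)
Your proof is correct and follows essentially the same approach as the paper's own argument: both reduce to the partition identity $\sum_{[C]}|\Ext^1_{\cat A}(A,B)_C|=|\Ext^1_{\cat A}(A,B)|$ after interchanging the order of summation. Your version is slightly more explicit about the cancellation of $|\Hom_{\cat A}(A,B)|$ and about why the interchange is legitimate in the completion, but the substance is identical.
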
 

\begin{proof} Since for all $A,B\in\Ob\cat A$
$$\sum_{[C] \in {\Iso\cat A}} \,\frac{|{\Hom_{\cat A}}(A,B)|} {|\Ext^1_{\cat A}(A,B)|}
F^{[A],[B]}_{[C]} =\sum_{[C]\in\Iso\cat A}\frac {|\Ext^1_{\cat A}(A,B)_C|}{|{\Ext^1_{\cat A}}(A,B)|}=1$$
we obtain
\begin{align*}
\Delta(\Exp_{\cat A})&=\sum_{[C]\in {\Iso\cat A}} \Delta([C])=\sum_{[A],[B],[C]\in {\Iso\cat A}} \frac{|{\Hom_{\cat A}}(A,B)|} {|\Ext^1_{\cat A}(A,B)|} F^{[A],[B]}_{[C]} [A]\otimes [B]\\
&=\sum_{[A],[B]\in {\Iso\cat A}} \left (\sum_{[C]\in {\Iso\cat A}}\frac{|{\Hom_{\cat A}}(A,B)|} {|\Ext^1_{\cat A}(A,B)|} F^{[A],[B]}_{[C]}\right) [A]\otimes [B]\\&=\sum_{[A],[B]\in {\Iso\cat A}} [A]\otimes [B]=\Exp_{\cat A}\otimes \Exp_{\cat A}.\qedhere
\end{align*}
\end{proof}

In general $\Delta$ is not coassociative. However, it is easy to see that $\Delta$ becomes coassociative if~$\cat A$ is hereditary. Moreover, in
that case it is a homomorphism of braided algebras. 
In order to make this statement precise, we need to introduce some terminology from the theory of braided categories.

Let ${\cat C}$ be an $\FF$-linear braided tensor category with the braiding operator $\Psi_{U,V}:U\otimes V\to V\otimes U$ for all objects $U,V$ of ${\cat C}$.  
If~$\mathcal A$, $\mathcal B$ are associative algebras in~$\cat C$, then $\mathcal A\tensor\mathcal B$ acquires a natural structure 
of an associative algebra in~$\cat C$ via $m_{\mathcal A\tensor\mathcal B}:=(m_{\mathcal A}\tensor m_{\mathcal B})(1_{\mathcal A}\tensor
\Psi_{\mathcal B,\mathcal A}\tensor 1_{\mathcal B})$, where $m_{\mathcal A}$, $m_{\mathcal B}$ are the respective multiplication morphisms.

Our main example of a category ${\cat C}$ is as follows. Let $\Gamma$ be an additive monoid and let $\chi:\Gamma\times\Gamma\to \kk^\times$ be a bicharacter. Let ${\cat C}_\chi$ be the tensor category of $\Gamma$-graded vector spaces $V=\bigoplus\limits_{\gamma\in \Gamma} V(\gamma)$ such that each component  $V(\gamma)$ is finite-dimensional. The following Lemma is immediate.

\begin{lemma} 
The category ${\cat C}_\chi$ is a braided tensor category with the braiding  $\Psi_{U,V}:U\otimes V\to V\otimes U$ for each objects $U$ and $V$ given by
$$\Psi_{U,V}(u_\gamma\otimes v_{\delta})=\chi(\gamma,\delta) \cdot v_{\delta}\otimes u_\gamma$$
for any $u_\gamma\in U(\gamma)$, $v_\delta\in V(\delta)$.   
\end{lemma}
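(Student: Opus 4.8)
The plan is to check the defining axioms of a braided tensor category for $\cat C_\chi$: that it is monoidal, that $\Psi$ is a natural isomorphism, and that the two hexagon identities hold. First I would recall the monoidal structure, which is the standard one on $\Gamma$-graded vector spaces: the tensor product is graded by $(U\tensor V)(\eta)=\bigoplus_{\gamma+\delta=\eta} U(\gamma)\tensor V(\delta)$, the unit object is $\kk$ concentrated in the neutral degree $0\in\Gamma$, and the associativity and unit constraints are those of $\operatorname{Vect}_\kk$, which manifestly preserve the grading. Since the morphisms in $\cat C_\chi$ are the degree-preserving linear maps, all these constraints are indeed morphisms in $\cat C_\chi$.

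Next I would verify that $\Psi_{U,V}$ is well-defined, natural, and invertible. It is prescribed on homogeneous tensors and extended bilinearly; since it merely permutes the two factors and multiplies by a scalar depending only on the degrees, it maps $(U\tensor V)(\eta)$ into $(V\tensor U)(\eta)$ and is therefore a morphism in $\cat C_\chi$. Naturality, i.e. $\Psi_{U',V'}\circ(f\tensor g)=(g\tensor f)\circ\Psi_{U,V}$ for morphisms $f\colon U\to U'$ and $g\colon V\to V'$, is immediate because $f$ and $g$ preserve degrees, so the same scalar $\chi(\gamma,\delta)$ appears on both sides. Invertibility follows from the assumption $\chi(\gamma,\delta)\in\kk^\times$: the inverse is given on homogeneous tensors by $v_\delta\tensor u_\gamma\mapsto \chi(\gamma,\delta)^{-1}\,u_\gamma\tensor v_\delta$.

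The essential step is the pair of hexagon axioms, and this is precisely where the bicharacter hypothesis on $\chi$ enters. Because the associators are trivial, the first hexagon reduces to $\Psi_{U,V\tensor W}=(1_V\tensor\Psi_{U,W})(\Psi_{U,V}\tensor 1_W)$. Evaluating the left-hand side on $u_\gamma\tensor v_\delta\tensor w_\epsilon$ yields $\chi(\gamma,\delta+\epsilon)\,v_\delta\tensor w_\epsilon\tensor u_\gamma$, whereas the right-hand side yields $\chi(\gamma,\delta)\chi(\gamma,\epsilon)\,v_\delta\tensor w_\epsilon\tensor u_\gamma$; these agree exactly because $\chi$ is multiplicative in its second argument. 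The second hexagon $\Psi_{U\tensor V,W}=(\Psi_{U,W}\tensor 1_V)(1_U\tensor\Psi_{V,W})$ is checked in the same way, this time invoking multiplicativity of $\chi$ in its first argument. No step presents a genuine obstacle; the only point requiring care is the degree bookkeeping, so that the bicharacter identities $\chi(\gamma,\delta+\epsilon)=\chi(\gamma,\delta)\chi(\gamma,\epsilon)$ and $\chi(\gamma+\delta,\epsilon)=\chi(\gamma,\epsilon)\chi(\delta,\epsilon)$ are applied to the correct homogeneous factors.
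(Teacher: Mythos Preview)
Your proof is correct; it is the standard verification that the axioms of a braided monoidal category hold, with the hexagon identities reducing to the bicharacter property of~$\chi$. The paper itself offers no proof, simply declaring the lemma ``immediate,'' so your argument is precisely the routine check the authors suppress.
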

In particular, if $\mathcal A$, $\mathcal B$ are associative algebras in~$\cat C_{\chi}$, the multiplication in~$\mathcal A\tensor\mathcal B$ is given by
$$
(a\tensor b)(a'\tensor b')=\chi(\beta,\alpha')(aa'\tensor bb'),
$$
where $a'\in \mathcal A(\alpha')$, $b\in \mathcal B(\beta)$.

Now let ${\cat A}$ be a hereditary finitary category. Let  $\Gamma=K_0({\cat A})$ be the Grothendieck group of ${\cat A}$ and 
let $\chi=\chi_{\cat A}: \Gamma\times \Gamma\to \QQ$ be the bicharacter given by:
$$\chi_{\cat A}(|M|,|N|)=\frac{|\Hom_{\cat A}(N,M)|} {|{\Ext^1_{\cat A}}(N,M)|},\qquad M,N\in\Ob\cat A.
$$
(the bicharacter $\chi_{\cat A}$ is well-defined because it is essentially  the Euler form on $K_0({\cat A})$). Thus, the coproduct~$\Delta$
can be written as
$$
\Delta([C])=\sum_{[A],[B]} \chi(|B|,|A|)F^{A,B}_C [A]\tensor [B].
$$

The following is a reformulation of Green's theorem (\cite{Green}) for Hall algebras.
\begin{theorem}  
\label{th:Hopf structures}
Let ${\cat A}$ be a finitary hereditary  category. 
Then $H_{\cat A}$ is a bialgebra in ${\cat C}_{\chi_{\cat A}}$ with the coproduct $\Delta$ given by \eqref{eq:coproductDeltaprim}.
\end{theorem}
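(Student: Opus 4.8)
The plan is to check the three families of axioms that make a triple $(H_{\cat A},m,\Delta)$ a bialgebra object in the braided category $\cat C_{\chi_{\cat A}}$: that $H_{\cat A}$ is an associative algebra object, that it is a coassociative coalgebra object with counit, and that $\Delta$ together with the counit are morphisms of algebras in $\cat C_{\chi_{\cat A}}$. The first and the counit part are routine. Since $\cat A$ is finitary, $H_{\cat A}=\bigoplus_{\gamma\in K_0(\cat A)}H_{\cat A,\gamma}$ is $K_0(\cat A)$-graded with finite-dimensional homogeneous components, hence an object of $\cat C_{\chi_{\cat A}}$; associativity of the Hall product is Ringel's theorem, the unit is $[0]$, and the counit $\epsilon\colon H_{\cat A}\to\QQ$ is the projection onto $H_{\cat A,0}=\QQ[0]$, for which the counit identities are immediate from~\eqref{eq:coproductDeltaprim}. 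Throughout I write $a_M=|\Aut_{\cat A}(M)|$.

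The coassociativity of $\Delta$ is where the hereditary hypothesis first enters, and it can be separated cleanly from the twist. Consider the untwisted coproduct $\Delta_0([C])=\sum_{[A],[B]}F^{A,B}_C\,[A]\otimes[B]$. Using Riedtmann's formula in the form $F^{A,B}_C=\tfrac{a_Aa_B}{a_C}F^C_{A,B}$ (Proposition~\ref{prop:Rie}), the identity $(\Delta_0\otimes1)\Delta_0=(1\otimes\Delta_0)\Delta_0$ reduces, after cancelling a common factor $\tfrac{a_Aa_Ba_C}{a_D}$, to
$$
\sum_{[E]}F^E_{A,B}F^D_{E,C}=\sum_{[E]}F^E_{B,C}F^D_{A,E},
$$
which is precisely the associativity of the Hall product (both sides count three-step filtrations of $D$ with subquotients $A,B,C$) and so holds for any finitary category. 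The full coproduct differs from $\Delta_0$ only by the scalar $\chi_{\cat A}(\gr B,\gr A)$ on the summand $[A]\otimes[B]$; such a bicharacter twist of a coassociative graded coproduct is again coassociative exactly when $\chi_{\cat A}$ is a bicharacter on $K_0(\cat A)$, i.e. when $\chi_{\cat A}(\gr B,\gr{A'}+\gr{A''})=\chi_{\cat A}(\gr B,\gr{A'})\chi_{\cat A}(\gr B,\gr{A''})$ and similarly in the first variable. This multiplicativity is equivalent to the additivity of the Euler form $(\gr M,\gr N)=\dim_\kk\Hom_{\cat A}(M,N)-\dim_\kk\Ext^1_{\cat A}(M,N)$ on short exact sequences, which is exactly the vanishing $\Ext^{\ge2}_{\cat A}=0$ guaranteed by heredity. (For non-hereditary $\cat A$ the expression $|\Hom_{\cat A}(N,M)|/|\Ext^1_{\cat A}(N,M)|$ is not even a function of $(\gr M,\gr N)$, which is why $\Delta$ fails to be coassociative in general.)

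The crux is the remaining axiom: that $\Delta$ is a morphism of algebras in $\cat C_{\chi_{\cat A}}$, i.e. $\Delta([M][N])=\Delta([M])\,\Delta([N])$, where the right-hand product is computed with the braided multiplication $(a\otimes b)(a'\otimes b')=\chi_{\cat A}(\gr b,\gr{a'})(aa'\otimes bb')$. Expanding the left side by~\eqref{eq:coproductDeltaprim} and the right side using both this braiding and the Hall products $[A][C]$, $[B][D]$, and then comparing the coefficient of $[X]\otimes[Y]$, the identity becomes Green's formula
\begin{multline*}
\sum_{[K]}\chi_{\cat A}(\gr Y,\gr X)\,F^{X,Y}_K\,F^K_{M,N}\\
=\sum_{[A],[B],[C],[D]}\chi_{\cat A}(\gr B,\gr A)\,\chi_{\cat A}(\gr D,\gr C)\,\chi_{\cat A}(\gr B,\gr C)\,F^{A,B}_M\,F^{C,D}_N\,F^X_{A,C}\,F^Y_{B,D}.
\end{multline*}
I would prove this by a single double-counting argument: the left side enumerates pairs consisting of a short exact sequence $0\to N\to K\to M\to 0$ together with a subobject realizing $K$ as an extension of $X$ by $Y$, while the right side enumerates the $2\times2$ commutative diagrams (nine-object configurations) with exact rows and columns whose four corner subquotients are $A,B,C,D$ and whose total row and column quotients are $M,N,X,Y$. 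Riedtmann's formula~\eqref{eq:Rie} converts each family of configurations into the displayed products of structure constants and automorphism orders.

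The hard part will be precisely this weight bookkeeping in Green's formula: the classical bijection between filtrations and $2\times2$ diagrams is elementary, but matching the normalizations so that the $|\Aut|$, $|\Hom|$ and $|\Ext^1|$ factors cancel against the three $\chi_{\cat A}$-twists is delicate, and it is again heredity that makes it work, since one needs $(\gr M,\gr N)$ to be additive along the exact rows and columns so that the powers of $|\kk|$ produced by the braiding agree with those produced by~\eqref{eq:Rie}. Once Green's formula is established the bialgebra claim is formal, and Lemma~\ref{lem:coprod} serves as a consistency check: it is the specialization of the morphism property to the group-like element $\Exp_{\cat A}$, for which $\Delta(\Exp_{\cat A})=\Exp_{\cat A}\otimes\Exp_{\cat A}$.
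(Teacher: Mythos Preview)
Your reduction to Green's formula is exactly what the paper does: it expands $\Delta([C])\Delta([C'])$ and $\Delta([C][C'])$, compares coefficients, and arrives at the identity~\eqref{E:Greenproof3}. The paper then simply \emph{cites} Green's theorem \cite{Green}*{Theorem~2} for that identity and stops; it does not verify coassociativity or the counit in the proof (these are dismissed as ``easy to see'' in the paragraph preceding the theorem). So your outline is more complete than the paper's, and your treatment of coassociativity via the untwisted $\Delta_0$ plus a bicharacter twist is a clean way to organise that part.

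Where your proposal diverges from the paper is in attempting to prove Green's formula rather than quote it, and here there is a genuine conceptual gap in your sketch. You describe the argument as ``the classical bijection between filtrations and $2\times2$ diagrams is elementary, but matching the normalizations\dots is delicate, and it is again heredity that makes it work, since one needs $(\gr M,\gr N)$ to be additive along the exact rows and columns.'' This misplaces the role of heredity. There is \emph{no} bijection between the two configuration sets in general: from a $2\times2$ exact grid one always gets a filtration of the middle term, but going the other way---from a subobject $Y\subset K$ together with $0\to N\to K\to M\to 0$---requires constructing the grid by pullback and pushout, and the obstruction to completing it with the correct corner pieces lives in $\Ext^2$. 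Heredity is used there, not merely to make the Euler form additive so that the $\chi$-weights balance. If you only invoke additivity of $(\cdot,\cdot)$ you will find that the weights match formally but you cannot show the two sums are over sets of the same (weighted) size. In short: the bookkeeping you call ``the hard part'' is actually the easy part; the hard part is the surjectivity/fiber analysis of the pullback--pushout construction, and that is where $\Ext^2=0$ does the real work.
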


\begin{proof}
We have
\begin{align*}
\Delta(&[C]) \Delta([C'])\\
&=\Big(\sum_{[A],[B]}  \chi(\gr{B},\gr{A})F^{A,B}_C\cdot  [A]\otimes [B]\Big)
\Big(\sum_{[A'],[B']}  \chi(\gr{B'},\gr{A'})
 F^{A',B'}_{C'}\cdot  [A']\otimes [B']\Big)\\
&=\sum_{[A],[B],[A'],[B']}  \chi(\gr{B},\gr{A}) \chi(\gr{B'},\gr{A'}) \chi(\gr{B},\gr{A'})
F^{A,B}_C F^{A',B'}_{C'} \cdot   [A] [A']\otimes [B][B']\\
&=\sum_{\substack{[A],[B],[A'] \\ [B'],[A''],[B'']}} \chi(\gr{B},\gr{A}) \chi(\gr{B'},\gr{A'}) \chi(\gr{B},\gr{A'}) F^{A,B}_C F^{A',B'}_{C'} F_{A,A'}^{A''}F_{B,B'}^{B''}\cdot  [A'']\otimes [B'']
\end{align*}

On the other hand, 
$$\Delta([C][C'])=\sum_{[C'']}F_{C,C'}^{C''} \Delta([C''])=\sum_{[C''], [A''], [B'']} \chi(\gr{B''},\gr{A''})
 F_{C,C'}^{C''} F^{A'',B''}_{C''}\cdot  [A'']\otimes [B'']$$

We need to compare the coefficients of $[A'']\tensor [B'']$ in these expressions.
Since $\gr{A''}=\gr{A}+\gr{A'}$ and~$\gr{B''}=\gr{B}+\gr{B'}$,
$$
\chi(\gr{B},\gr{A}) \chi(\gr{B'},\gr{A'}) \chi(\gr{B},\gr{A'})\chi(\gr{B''},\gr{A''})^{-1}=\chi(\gr{B'},\gr{A})^{-1}
$$
By \cite{Green}*{Theorem~2}
\begin{equation}\label{E:Greenproof3}
\sum_{[A],[B],[A'],[B']\in\Iso\cat A}\chi(|B'|,|A|)^{-1}
\cdot F^{A''}_{A,A'}F^{B''}_{B,B'} F_{C'}^{A',B'}F_C^{A,B}=\sum_{[C'']\in\Iso\cat A}    F_{C,C'}^{C''} F_{C''}^{A'',B''}
\end{equation}
for any objects $A'',B'',C,C'$ in~$\cat A$, and it is now immediate
that
$$\Delta([C])\Delta([C'])=\Delta([C][C'])
$$
that is, $\Delta$ is a homomorphism of algebras.
\end{proof}

\begin{bibdiv}
\begin{biblist}
\bib{ARS}{book}{
   author={Auslander, Maurice},
   author={Reiten, Idun},
   author={Smal{\o}, Sverre O.},
   title={Representation theory of Artin algebras},
   series={Cambridge Studies in Advanced Mathematics},
   volume={36},
   publisher={Cambridge University Press},
   place={Cambridge},
   date={1995},
   pages={xiv+423},
   isbn={0-521-41134-3},
}
\bib{CX}{article}{
   author={Chen, Xueqing},
   author={Xiao, Jie},
   title={Exceptional sequences in Hall algebras and quantum groups},
   journal={Compositio Math.},
   volume={117},
   date={1999},
   number={2},
   pages={161--187},
   issn={0010-437X},
}
\bib{Green}{article}{
   author={Green, James A.},
   title={Hall algebras, hereditary algebras and quantum groups},
   journal={Invent. Math.},
   volume={120},
   date={1995},
   number={2},
   pages={361--377},
   issn={0020-9910},
}

\bib{Fei}{article}{
   author={Fei, Jiarui},
   title={Counting Using Hall Algebras I. Quivers},
   journal={Preprint, arXiv:1111.6452},
   date={2011},
}

\bib{Keller}{article}{
   author={Keller, Bernhard},
   title={On cluster theory and quantum dilogarithm identities},
   journal={Preprint, arXiv:1102.4148},
   date={2011},
}

\bib{Mac}{book}{
   author={Macdonald, I. G.},
   title={Symmetric functions and Hall polynomials},
   publisher={The Clarendon Press Oxford University Press},
   place={New York},
   date={1979},
}

\bib{reineke}{article}{
   author={Reineke, Markus},
   title={Counting rational points of quiver moduli},
   journal={Int. Math. Res. Not.},
   date={2006},
volume={2006},
   pages={1--19},
}

\bib{Rie}{article}{
   author={Riedtmann, Christine},
   title={Lie algebras generated by indecomposables},
   journal={J. Algebra},
   volume={170},
   date={1994},
   number={2},
   pages={526--546},
}

\bib{Ringel}{article}{
   author={Ringel, Claus Michael},
   title={Hall algebras and quantum groups},
   journal={Invent. Math.},
   volume={101},
   date={1990},
   number={3},
   pages={583--591},
   issn={0020-9910},
}
\bib{Schiffmann}{article}{
   author={Schiffmann, Olivier},
   title={Lectures on Hall algebras},
   journal={Preprint, arXiv:math.RT/0611617},
   date={2006},
}

\bib{Sev}{article}{
   author={Sevostyanov, Alexey},
   title={Regular nilpotent elements and quantum groups},
   journal={Comm. Math. Phys.},
   volume={204},
   date={1999},
   number={1},
   pages={1--16},
   issn={0010-3616},
}

\end{biblist}

\end{bibdiv}

\end{document}